\tikzstyle{vertex}=[circle, draw, inner sep=0pt, minimum size=6pt]
\newcommand{\vertex}{\node[vertex]}
\def\A{{\mathbb A}}
\def\C{{\mathbb C}}
\def\F{{\mathbb F}}
\def\K{{\mathbb K}}
\def\P{{\mathbb P}}
\def\Q{{\mathbb Q}}
\def\R{{\mathbb R}}
\def\Z{{\mathbb Z}}
\def\ord{\operatorname{ord}}
\def\ac{\operatorname{ac}}
\def\top{\operatorname{top}}
\def\mot{\operatorname{mot}}
\def\div{\operatorname{div}}
\def\Jac{\operatorname{Jac}}
\def\Pic{\operatorname{Pic}}
\def\Exp{\operatorname{Exp}}
\def\lct{\operatorname{lct}}
\def\lcm{\operatorname{lcm}}
\begin{document}

\title*{Introduction to the monodromy conjecture}
\author{Willem Veys}
\institute{Willem Veys \at KU Leuven, Departement Wiskunde, Celestijnenlaan 200B, 3001 Leuven, \email{wim.veys@kuleuven.be}}
%
%
\maketitle

\abstract*{The monodromy conjecture is a mysterious open problem in singularity theory. Its original version relates arithmetic and topological/geometric properties of a multivariate polynomial $f$ over the integers, more precisely, poles of the $p$-adic Igusa zeta function of $f$ should induce monodromy eigenvalues of $f$. The case of interest is when the zero set of $f$ has singular points.
We first present some history and motivation. Then we expose a proof in the case of two variables, and partial results in higher dimension, together with geometric theorems of independent interest inspired by the conjecture. We conclude with various generalizations.
}

\abstract{The monodromy conjecture is a mysterious open problem in singularity theory. Its original version relates arithmetic and topological/geometric properties of a multivariate polynomial $f$ over the integers, more precisely, poles of the $p$-adic Igusa zeta function of $f$ should induce monodromy eigenvalues of $f$. The case of interest is when the zero set of $f$ has singular points.
We first present some history and motivation. Then we expose a proof in the case of two variables, and partial results in higher dimension, together with geometric theorems of independent interest inspired by the conjecture. We conclude with several possible generalizations.}

\section*{Contents}
\setcounter{minitocdepth}{3}
\dominitoc


\section{Introduction}

The monodromy conjecture is an intriguing and mysterious problem in singularity theory, on the crossroads of number theory, algebra,  analysis, geometry and topology.
Its original statement predicts a remarkable relation between arithmetic and geometric/topological properties of a polynomial with integer coefficients.
More precisely, let $f\in \Z[x_1,\dots,x_n]$ and  $p$ a prime number. What can we say about the numbers of solutions of $f=0$ over the finite residue rings $\Z/p^i\Z$ for varying $i$?
These data turn out to be equivalent to the knowledge of a certain $p$-adic integral involving a complex parameter $s$, nowadays called the ($p$-adic) Igusa zeta function $Z_p(f;s)$. Igusa showed that it is in fact a rational function in $p^{-s}$. In particular, its poles govern the asymptotic behaviour of the numbers of solutions above when $i$ tends to infinity.

The conjecture predicts that these poles induce monodromy eigenvalues of $f$: when $s_0\in\C$ is a pole of $Z_p(f;s)$, then $e^{2\pi i \Re(s_0)}$ should be a monodromy eigenvalue of $f$ at some complex point of $\{f=0\}$. These eigenvalues are certain invariants of $f$ in the context of differential topology, where $f$ is viewed as a function from $\C^n$ to $\C$.
A priori there seems to be no reason to expect such an implication; it is somewhat motivated by a similar statement, which is a theorem for  analogous real or complex integrals, nowadays called \lq archimedean zeta functions\rq.
When the hypersurface $\{f=0\}$ has no singularities, both its Igusa zeta function and monodromy eigenvalues are quite simple and the conjecture is obvious. It is really a problem within different aspects of singularity theory.

Later Denef and Loeser constructed two related, more geometric, \lq zeta functions of Igusa type\rq: the  topological and motivic zeta function, inspired by a formula for $Z_p(f;s)$ in terms of an embedded resolution of $\{f=0\}$ and by the context of motivic integration, respectively.  Similar monodromy conjectures can be stated for these zeta functions.

Somewhat frustratingly, a conceptual reason for the conjecture to hold is still missing. All proofs of all known cases are ad hoc; one essentially derives enough information on both sides (poles and eigenvalues), often using resolution of singularities and requiring hard work, and then the conclusion follows by inspection of that information.
On the other hand,  attempts of proofs or effective proofs, as well as quests for counterexamples, regularly produce geometric results of independent interest.

\smallskip
We sketch the \lq archimedean history\rq, leading to the original $p$-adic formulation of the conjecture, and introduce the other zeta functions of Igusa type.
We explain a proof for polynomials in two variables (the original proof is due to Loeser in 1988), which is up to now the only dimension where the conjecture is known to hold in general, and we present some significant results for polynomials in three variables, focussing for simplicity of notation on the topological zeta function.  Here some geometric results of independent interest pop up.
We summarize various special cases when the conjecture was proven, and we conclude with possible generalizations of the conjecture in different directions.

Another introductory text with focus on the motivic zeta function is \cite{Nicaise}.
The survey paper \cite{VS} is a substantial introduction to $p$-adic and motivic integration, featuring also zeta functions of Igusa type and applications to singularities.



\section{History}
\label{sec:1}

The original monodromy conjecture connects number theory and geometry/topology; however, it arose from a problem in analysis, posed at the International Congress of Mathematicians in 1954.

\subsection{Archimedean zeta functions}\label{archimedean}
\label{subsec.1.1}

We start with a basic example from calculus. Consider the real integral
$$
I(s) = \int_0^\infty x^s \varphi(x) dx,
$$
where $\varphi$ is  a complex $C^\infty$ function with compact support  and $s$ is a  complex parameter. One easily verifies that $I(s)$ converges for $s\in \C, \Re(s) >-1$, and is holomorphic in that region of the complex plane. In order to investigate its possible continuation, we decompose $I(s)$ as
$$
 \int_0^1  x^s\big(\varphi(x)-\varphi(0)\big) dx  + \int_0^1 x^s \varphi(0) dx  + \int_1^\infty x^s \varphi(x) dx \, .
$$
Since $\varphi(x)-\varphi(0) = x\tilde \varphi(x)$ for another such function $\tilde \varphi$, the first summand converges for $\Re(s) >-2$. The second summand equals $\frac{\varphi(0)}{s+1}$, and the third one converges and is holomorphic on $\C$.  Hence $I(s)$ can be meromorphically continued to the region where $\Re(s) >-2$, with $-1$ as possible pole.
Repeating this procedure yields that $I(s)$ has a meromorphic continuation to $\C$ with possible (simple) poles in $\Z_{<0}$.

\smallskip
What happens if we generalize the univariate polynomial $x$ to a general multivariate  $f \in \R[x_1, \dots, x_n] \setminus \R$? We consider again a so-called \lq test function\index{test function}\rq\ $\varphi:\R^n\to \C$, i.e., a $C^\infty$ function with compact support, and we associate to these data the (real) {\em zeta function\index{real zeta function}}
$$
Z(s) = Z(f,\varphi;s) := \int_{\R^n} |f(x)|^s \varphi(x) dx.
$$
Strictly speaking, the integrand is not defined on $\{f=0\}$, but we will always neglect such subsets of measure zero, in order to simplify notation. (These integrals were sometimes called \lq complex powers\index{complex power}\rq\ and denoted as $f^s$, and they were rather considered as distributions on the space of test functions, see e.g. \cite{Igusa4}.)
Now one easily verifies that $Z(s)$ converges for $s\in \C, \Re(s) >0$, and is holomorphic in that region of the complex plane.  During a talk at the ICM in 1954, I. Gel'fand posed the meromorphic continuation of
$Z(s)$ as an open problem, motivated by applications to the existence of fundamental solutions of linear PDE's with constant coefficients.  Around 1970, Bernstein and S. Gel'fand \cite{Bernstein-Gelfand} and Atiyah \cite{Atiyah} proved this using resolution of singularities, and Bernstein gave another proof \cite{Bernstein2},
using the so-called b-function\index{b-function} (note that both techniques were not available in 1954).
We explain the main ideas in both proofs; for a more elaborate overview on this problem, we refer to \cite{Le}, and for all details to \cite[Chapter 5]{Igusa4}.  We will see that each proof provides also a complete list of possible candidate poles.

\medskip
\noindent {\sc First proof.}
Let $h:Y \to \R^n$ be an embedded resolution\index{embedded resolution}\index{resolution of singularities} of $\{f=0\}$.  So $Y$ is an $n$-dimensional (real) manifold, $h$ is proper, the restriction of $h$ to $Y\setminus h^{-1}\{f=0\}$ is an isomorphism and $h^{-1}\{f=0\} = \cup_{j\in S}E_j$ is a simple normal crossings\index{simple normal crossings divisor} divisor, that is, its irreducible components $E_j$ are nonsingular hypersurfaces that intersect transversally.
 (See e.g. \cite{Hironaka} and more specifically \cite{BEV}\cite{EncinasNobileVillamayor}\cite{Wl2}\cite{Wl3} for such a construction.)

To each $E_j$ one associates its {\em numerical data\index{numerical data}} $(N_j, \nu_j)$, defined (locally) as follows.
In local coordinates $y=(y_1,\dots,y_n)$ on a small enough chart $U_P$ around a point $P \in Y$, one can write $f\circ h$ and $\Jac_h$ (the Jacobian determinant of $h$) in the form
\begin{equation}\label{numerical data in local coordinates}
(f\circ h)(y) = u_P(y) \prod_{i\in S_P} y_i^{N_i} \quad\text{and}\quad \Jac_h(y)= v_P(y)\prod_{i\in S_P} y_i^{\nu_i -1},
\end{equation}
respectively, where $P\in E_i$ precisely for $i\in S_P\subset S$, and $u_P$ and $v_P$ are invertible on $U_P$.
Globally this means that the divisors of zeroes of $f\circ h$ and of $h^*(dx_1\wedge\dots\wedge dx_n)$ are precisely
$$
\sum_{j\in S} N_j E_j  \quad  \text{ and }\quad  \sum_{j\in S} (\nu_j -1) E_j,
$$
respectively.
Using $h$ as a change of variables, we have that
$$
Z(s) = \int_Y |(f\circ h)(y)|^s |\Jac_h(y)| (\varphi \circ h)(y) dy.
$$
Since $h$ is proper, also $\varphi \circ h$ has compact support, and we need only finitely many charts $U_P$ to cover this support.  With notation (\ref{numerical data in local coordinates}) on these charts, and a
partition of the unity $\{\rho_P\}_P$ subordinated to this finite cover, we can write $Z(s)$ as a finite sum
$$\sum_{P} \int_{U_P} \rho_P(y) |u_P(y)|^s |v_P(y)|  (\varphi \circ h)(y)  \prod_ {i\in S_P}  |y_i|^{N_i s+ \nu_i -1} dy_i.
$$
Such \lq monomial\rq\ integrals are easy to handle $n$-dimensional generalizations of the basic example above, see e.g. \cite[Lemma 7.3]{AGV}, and we obtain the following.

\begin{theorem}\label{poles archimedean}
Let $f \in \R [x_1, \dots, x_n] \setminus \R$ and $\varphi: \R^n \to \C$ a $C^\infty$ function with compact support. Then  $Z(f,\varphi;s)$  has a meromorphic continuation to $\C$.
Moreover, if $h$ is an embedded resolution of $\{f=0\}$ with numerical data $(N_j, \nu_j), j\in S$, then the poles of $Z(f,\varphi;s)$ are of the form
$ -\frac{\nu_j +k}{N_j}$,  with $ j\in S$ and  $ k\in \Z_{\geq 0} $.
\end{theorem}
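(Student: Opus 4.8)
The entire computation carried out above is already the backbone of the argument, so the plan is to (i) make precise the reduction of $Z(f,\varphi;s)$ to a finite sum of monomial integrals, and then (ii) analyse a single such monomial integral by a multivariate version of the basic one-variable example from the beginning of Section~\ref{archimedean}.

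For step (i) I would proceed as follows. Fix an embedded resolution $h:Y\to\R^n$ of $\{f=0\}$. Since $h$ is proper, $\varphi\circ h$ is $C^\infty$ with compact support, so its support is covered by finitely many charts $U_P$ on which (\ref{numerical data in local coordinates}) holds; I may take each $U_P$ connected, so that $u_P$ has constant sign there and $|u_P|^s=e^{s\log|u_P|}$ is a $C^\infty$ function of $y$ which is entire in $s$, with all $y$-derivatives again entire in $s$ and locally bounded in $s$ on the (compact) support. Since $|v_P|$ and $\varphi\circ h$ are also $C^\infty$, choosing a partition of unity $\{\rho_P\}$ subordinate to the cover, the change-of-variables formula rewrites $Z(s)$ as the finite sum displayed just before the theorem. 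Hence it suffices to treat a single integral
$$
J(s)=\int_{\R^n}\Psi(y,s)\prod_{i\in S_P}|y_i|^{N_is+\nu_i-1}\,dy,
$$
in which $\Psi(\cdot,s)$ is $C^\infty$ with compact support, holomorphic in $s$, and whose Taylor coefficients and remainders in any $y_i$ are again of the same kind.

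For step (ii), the variables $y_i$ with $i\notin S_P$ contribute only a bounded $C^\infty$ integration and are absorbed into $\Psi$, so I may assume $S_P=\{1,\dots,r\}$ and integrate the crossing variables one at a time. Freezing $y_2,\dots,y_r$, I look at the $y_1$-integral over a small interval around $0$ (outside of which $\Psi$ vanishes) and Taylor-expand $\Psi$ in $y_1$ to order $M$: each monomial term $\int|y_1|^{N_1s+\nu_1-1}y_1^m\,dy_1$ can be evaluated explicitly, being of the form $g(s)/(N_1s+\nu_1+m)$ with $g$ entire (and vanishing for odd $m$), hence meromorphic on $\C$ with at most a simple pole at $s=-\frac{\nu_1+m}{N_1}$; and the degree-$M$ remainder term converges and is holomorphic in $s$, uniformly on compact subsets of the region $\Re(N_1s+\nu_1)>-M-1$, so letting $M\to\infty$ yields the meromorphic continuation of the $y_1$-integral with poles only at points $s=-\frac{\nu_1+k}{N_1}$, $k\in\Z_{\ge0}$. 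The outcome is again $C^\infty$ with compact support in $y_2,\dots,y_r$ and meromorphic in $s$, so applying the same step successively to $y_2,\dots,y_r$ (each contributing only poles at $s=-\frac{\nu_i+k}{N_i}$) and then performing the remaining integrations shows that $J(s)$, and therefore $Z(f,\varphi;s)$, is meromorphic on $\C$ with poles contained in $\{-\frac{\nu_j+k}{N_j}:j\in S,\ k\in\Z_{\ge0}\}$.

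The genuine work — more bookkeeping than a deep obstacle — lies in step (ii): one must check that the Taylor remainders converge uniformly on compact $s$-sets off the candidate pole set, so that meromorphy is preserved under the limit $M\to\infty$ and under the successive integrations, while keeping track of the holomorphic dependence on $s$ carried by the factor $|u_P|^s$, and noting that several of the linear forms $N_is+\nu_i-1$ can vanish at one point, which may raise the order of a pole there — this is precisely why the statement only lists pole \emph{candidates} and asserts nothing about actual poles or their orders. This careful analysis of monomial integrals is what is packaged in \cite[Lemma 7.3]{AGV}.
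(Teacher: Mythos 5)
Your proposal follows exactly the paper's first proof: pull the integral back via an embedded resolution, use properness and a partition of unity subordinate to a finite chart cover to reduce $Z(f,\varphi;s)$ to a finite sum of monomial integrals, and then continue each monomial integral meromorphically by iterated one-variable Taylor expansion. Your step~(ii) is a correct unpacking of the one-variable-at-a-time analysis that the paper delegates to \cite[Lemma 7.3]{AGV}, and your closing remarks about uniform convergence of the remainders and about possible collisions of the linear forms raising pole orders are precisely the right caveats.
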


\smallskip
\noindent {\sc Second proof.}
We first briefly introduce the b-function.  For details and more information, we refer to e.g. \cite[Chapter 4]{Igusa4}\cite{Bernstein}.   Let $K$ be a field of characteristic zero and $n$ a positive integer. The Weyl algebra $\mathcal{D}_n$ is the (non-commutative) algebra of algebraic differential operators $K\langle x,\partial/\partial x\rangle := K\langle x_1\dots,x_n,\partial/\partial x_1,\dots,\partial/\partial x_n \rangle$, in the obvious way acting on $K[x_1\dots,x_n]$.  Adding another variable $s$, we obtain the algebra $\mathcal{D}_n[s]$, where $s$ acts on $K[x_1\dots,x_n]$ simply by multiplication.

\begin{theorem}[\cite{Bernstein}] Let $f\in K[x_1\dots,x_n]\setminus K$. There exists a nonzero operator $P(s)\in \mathcal{D}_n[s]$ and a nonzero polynomial $b(s)\in K[s]$ satisfying
\begin{equation}\label{bernstein}
P(s) \cdot f^{s+1} = b(s) f^s.
\end{equation}
\end{theorem}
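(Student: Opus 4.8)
The plan is to establish the functional equation (\ref{bernstein}) through the dimension theory of modules over the Weyl algebra; this is, in essence, Bernstein's original argument \cite{Bernstein}. Put $d=\deg f$ and write $\mathcal{A}:=\mathcal{D}_n\otimes_K K(s)$ for the Weyl algebra in $x_1,\dots,x_n$ over the field $K(s)$. Introduce the localized module
$$
N \;:=\; K(s)\big[x_1,\dots,x_n,\tfrac1f\big]\cdot f^s,
$$
the free $K(s)[x_1,\dots,x_n,1/f]$-module of rank one on a formal symbol $f^s$, turned into a left $\mathcal{A}$-module by letting each $x_i$ act by multiplication and each $\partial_i:=\partial/\partial x_i$ act through the formal chain rule $\partial_i(g\,f^s)=\big(\partial_i g+s\,g\,\tfrac{\partial_i f}{f}\big)f^s$. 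Since $f^s$ is a nonzero element of $N$, the cyclic submodule $M:=\mathcal{A}\cdot f^s$ is nonzero, and $f^{s+1}=f\cdot f^s\in M$ because multiplication by $f$ lies in $\mathcal{D}_n\subset\mathcal{A}$. The proof then runs in three steps: (i) $M$ has finite length as an $\mathcal{A}$-module; (ii) this forces a functional equation with coefficients in $K(s)$; (iii) clearing denominators and shifting $s$ yields (\ref{bernstein}).

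For step (i) I would use the Bernstein filtration $\{F_m\}_m$ on $\mathcal{A}$, where $F_m$ is the $K(s)$-span of the monomials $x^\alpha\partial^\beta$ with $|\alpha|+|\beta|\le m$, and control the induced good filtration $\Gamma_m:=F_m\cdot f^s$ of $M$. The claim is that $\Gamma_m$ is contained in the $K(s)$-span of the elements $\frac{g}{f^m}\,f^s$ with $g\in K(s)[x_1,\dots,x_n]$ of degree at most $(d+1)m$. Indeed, $\Gamma_0=K(s)\cdot f^s$, and since $\Gamma_{m+1}=F_1\cdot\Gamma_m$ the inductive step reduces to examining the degree-one generators: $x_i$ leaves the denominator untouched and raises the numerator degree by $1$, while
$$
\partial_i\!\left(\frac{g}{f^m}f^s\right)=\frac{f\,\partial_i g+(s-m)\,g\,\partial_i f}{f^{m+1}}\,f^s
$$
raises the denominator exponent by $1$ and the numerator degree by at most $d-1$, introducing only a harmless polynomial factor of $s$. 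Hence $\dim_{K(s)}\Gamma_m$ is bounded by the number of monomials in $n$ variables of degree $\le(d+1)m$, a polynomial in $m$ of degree $n$, so the $\mathcal{A}$-module $M$ has dimension at most $n$. Being moreover nonzero and finitely generated, $M$ has dimension exactly $n$ by Bernstein's inequality, that is, it is holonomic, hence of finite length.

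For step (ii), consider the descending chain of $\mathcal{A}$-submodules $M=\mathcal{A}\cdot f^{s}\supseteq\mathcal{A}\cdot f^{s+1}\supseteq\mathcal{A}\cdot f^{s+2}\supseteq\cdots$, with $f^{s+j}:=f^jf^s$ and the inclusions valid because $f\in\mathcal{D}_n\subset\mathcal{A}$. By finite length this chain stabilizes, so $\mathcal{A}\cdot f^{s+k}=\mathcal{A}\cdot f^{s+k+1}$ for some integer $k\ge0$, whence $f^{s+k}=Q\cdot f^{s+k+1}$ for some $Q\in\mathcal{A}$. For step (iii), let $b_0(s)\in K[s]\setminus\{0\}$ be a common denominator of the $K(s)$-coefficients of $Q$, so that $P_0(s):=b_0(s)\,Q\in\mathcal{D}_n[s]$ and $P_0(s)\cdot f^{s+k+1}=b_0(s)\,f^{s+k}$. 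Because the chain rule above produces only polynomial dependence on $s$, this is an identity polynomial in $s$; substituting $s\mapsto s-k$ gives $P(s)\cdot f^{s+1}=b(s)\,f^s$ with $P(s):=P_0(s-k)\in\mathcal{D}_n[s]$ and $b(s):=b_0(s-k)\in K[s]\setminus\{0\}$, which is exactly (\ref{bernstein}).

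The essential — and only genuinely delicate — ingredient is step (i): upgrading the polynomial growth estimate $\dim_{K(s)}\Gamma_m=O(m^n)$ to the structural conclusion that $M$ has finite length. This is precisely the content of Bernstein's inequality and the basic dimension and multiplicity theory of the Weyl algebra over a field: a nonzero finitely generated module has dimension $\ge n$, so here $M$ has dimension exactly $n$ and is holonomic, and holonomic modules carry a multiplicity — a positive integer — that is additive along short exact sequences of holonomic modules, which prevents any strictly descending chain of submodules from being infinite. Granting this machinery, everything else — the degree bookkeeping in the filtration, the stabilization of the chain of powers of $f$, the legitimacy of the substitution $s\mapsto s-k$, and the clearing of denominators — is routine.
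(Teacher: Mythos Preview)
The paper does not give its own proof of this theorem: it simply states the result with the citation \cite{Bernstein} and moves on to use it. Your proposal is a correct and well-organized rendition of Bernstein's original argument---holonomicity of $\mathcal{A}\cdot f^s$ via the Bernstein filtration, finite length, stabilization of the chain $\mathcal{A}\cdot f^{s+k}$, then clearing denominators and shifting $s$---so there is nothing to compare against here beyond noting that you have supplied precisely the proof the paper defers to the literature.
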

This identity can be viewed as either \lq formal\rq\ or for  integer values of $s$.  The unique monic polynomial of smallest degree satisfying (\ref{bernstein}) is called the
{\em Bernstein-Sato polynomial\index{Bernstein-Sato polynomial}} or {\em b-function\index{b-function}} of $f$, denoted $b_f(s)$.
For $a\in K^n$, there is also a local version $b_{f,a}(s)$, 
and  $b_f= \lcm_{a\in K^n} b_{f,a}$; see e.g. \cite[Proposition 4.2.1]{MN}.

For ease of exposition, we assume now that  $f(x) \geq 0$ on the domain of $\varphi$. (Otherwise one splits $f$ in a positive and negative part and applies the following argument twice.)
Using (\ref{bernstein}) for $b_f$, we compute
$$
\begin{aligned}
b_f(s) Z(s) &=  \int_{\R^n} b_f(s)  f(x)^s \varphi(x) dx =  \int_{\R^n} P(x, \partial /\partial x,s) \cdot f(x)^{s+1} \varphi(x) dx \\
&= \int_{\R^n}  f(x)^{s+1} P^*(x, \partial /\partial x,s) \cdot \varphi(x) dx,
\end{aligned}
$$
where the last equality is an instance of integration by parts, via the adjoint operator $P^*$ of $P$.
Hence
$$
Z(s) = \frac 1{b_f(s)}  \int_{\R^n}  f(x)^{s+1} \varphi_1(x,s) dx,
$$
for some other test function $\varphi_1$, depending polynomially on $s$.  We can continue this process, and obtain for arbitrary $r\in \Z_{\geq 1}$ that
$$
Z(s) = \frac 1{b_f(s+r-1) \dots b_f(s+1)b_f(s)}  \int_{\R^n}  f(x)^{s+r} \varphi_r(x,s) dx,
$$
for some test function $\varphi_r$, and conclude finally the following.

\begin{theorem}\label{poles archimedean2}
Let $f \in \R [x_1, \dots, x_n] \setminus \R$ and $\varphi: \R^n \to \C$ a $C^\infty$ function with compact support. Then  $Z(f,\varphi;s)$  has a meromorphic continuation to $\C$.
Moreover, its poles  are of the form
$ \lambda -k$,  with $\lambda$ a root of $b_f$ and  $ k\in \Z_{\geq 0} $.
\end{theorem}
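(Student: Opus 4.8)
The plan is to carry out exactly the strategy prepared in the discussion above: exploit the Bernstein--Sato functional equation~(\ref{bernstein}) to trade one unit of analytic continuation of $Z(f,\varphi;s)$ for one factor of $b_f$ in a denominator, and then iterate. Throughout we keep the standing convention that the measure-zero set $\{f=0\}$ is ignored, and, as in the text, we reduce to the case where $f\ge 0$ on the support of $\varphi$ (otherwise split the integral over $\{f>0\}$ and $\{f<0\}$, noting $b_{-f}=b_f$), so that $f(x)^{s+k}:=e^{(s+k)\log f(x)}$ is a well-defined measurable function. The base case is the already-noted fact that $Z(f,\varphi;s)$ converges and is holomorphic on $\Re(s)>0$.

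\emph{One step.} Pick $P(s)\in\mathcal{D}_n[s]$ with $P(s)\cdot f^{s+1}=b_f(s)f^s$. For $\Re(s)$ large, $f^{s+1}$ is $C^N$ with $N$ exceeding the order of $P$ (each partial derivative of $f^{s+1}$ of order $\ell\le N$ is $f^{s+1-\ell}$ times a polynomial in $x$ and $s$, hence continuous and vanishing on $\{f=0\}$ once $\Re(s)+1>N$), so the functional equation holds pointwise almost everywhere there. Multiplying by $\varphi$, integrating, and integrating by parts --- legitimate because $\varphi$ has compact support, so no boundary terms appear --- moves $P(s)$ onto $\varphi$ via its adjoint:
$$
b_f(s)\,Z(f,\varphi;s)=\int_{\R^n}f(x)^{s+1}\,\varphi_1(x,s)\,dx,\qquad \varphi_1:=P^*(s)\cdot\varphi.
$$
Since $P^*(s)$ has coefficients polynomial in $x$ and $s$, the function $\varphi_1$ is again $C^\infty$ with compact support and polynomial in $s$, and the integral on the right converges and is holomorphic on $\Re(s)>-1$. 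Dividing by $b_f(s)$ meromorphically continues $Z(f,\varphi;s)$ to $\Re(s)>-1$, with poles there among the roots of $b_f$.

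\emph{Iteration.} Applying the step $r$ times yields a test function $\varphi_r(x,s)$, polynomial in $s$, with
$$
Z(f,\varphi;s)=\frac{1}{b_f(s+r-1)\cdots b_f(s+1)\,b_f(s)}\int_{\R^n}f(x)^{s+r}\,\varphi_r(x,s)\,dx,
$$
the integral being holomorphic on $\Re(s)>-r$. Since $r$ is arbitrary, $Z(f,\varphi;s)$ is meromorphic on all of $\C$, and any pole of it must be a zero of some factor $b_f(s+j)$ with $0\le j\le r-1$, i.e.\ $s=\lambda-j$ for a root $\lambda$ of $b_f$ and $j\in\Z_{\ge 0}$ --- which is the claimed shape (take $k=j$).

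\emph{Main obstacle.} The one point requiring care is the passage from~(\ref{bernstein}), which is an identity of operators in $\mathcal{D}_n[s]$ (\lq formal\rq, or valid for integer $s$), to a genuine identity of the two holomorphic functions of $s$ above. This is resolved by the identity theorem: both sides of $b_f(s)Z(f,\varphi;s)=\int_{\R^n} f^{s+1}\,P^*(s)\varphi\,dx$ are holomorphic on a common right half-plane, and on the region $\Re(s)\gg 0$ --- where $f^{s+1}$ is smooth enough for the pointwise functional equation and for integration by parts to be valid --- they agree, hence they agree wherever both are defined. The remaining ingredients (convergence and holomorphy in $s$ of the resulting monomial-type integrals, and the smoothness and compact support of each $\varphi_r$) are routine.
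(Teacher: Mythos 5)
Your proposal follows the paper's second proof exactly: reduce to $f\ge 0$, apply the Bernstein--Sato functional equation and integration by parts to trade a factor of $b_f(s)$ for one more unit of convergence, and iterate. The extra care you take (the $C^N$ smoothness of $f^{s+1}$ for $\Re(s)$ large, the absence of boundary terms, and the identity-theorem argument promoting the formal relation to an identity of holomorphic functions) is a correct and welcome elaboration of steps the paper leaves implicit, but the method is the same.
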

Note that this list is consistent with Theorem \ref{poles archimedean}, see Theorem \ref{roots b}.

A somewhat finer analysis, see e.g. \cite[Theorem 5.3.1]{Igusa4}, yields the following. Say $b_f$ factors as $b_f(s)=\prod_\mu (s+\mu)$. Then  $Z(f,\varphi;s)/ \prod_\mu \Gamma(s+\mu)$ has a holomorphic continuation to $\C$, where $\Gamma(\cdot)$ is the gamma function.

\begin{remark}\label{complex and Schwartz} (1) One has complete analogues of Theorems \ref{poles archimedean} and \ref{poles archimedean2} for complex zeta functions\index{complex zeta function}
$ Z(s) = Z(f,\varphi;s) := \int_{\C^n} |f(x)|^{2s} \varphi(x) dxd\bar x $,
where now $f \in \C [x_1, \dots, x_n] \setminus \C$. We refer again to \cite{Le} and  \cite{Igusa4} for details. {\em Archimedean zeta functions\index{archimedean zeta function}} is a  collective name for the real and complex zeta functions.

(2) These theorems can in the two archimedean settings be generalized to {\em Schwartz-Bruhat functions\index{Schwartz-Bruhat function}} $\varphi$. These are $C^\infty$ functions on $\R^n$ or $\C^n$ with \lq very fast decay at infinity\rq. More precisely, $\varphi$ and all its partial derivatives are globally bounded, that is,
the supremum over all $x$ in $\R^n$ or $\C^n$ of $|P\varphi(x)|$ is finite, for every $P\in \mathcal{D}_n$. See again  \cite{Igusa4} for more information.
\end{remark}

\subsection{Relation with monodromy}\label{relation monodromy}

We first make terminology precise. Let $f \in \C[x_1, \dots, x_n] \setminus \C$ and $a\in \{f=0\}\subset \C^n$. We say that $a$ is a singular point of $\{f= 0 \}$ if  $\partial f/\partial x_i (a) =0$ for all $i=1,\dots, n$; that is, somewhat abusing notation, we do take into account that  $f$ is possibly non-reduced.
We say that $a$ is an isolated singularity of $\{f= 0 \}$ if $a$ is a singular point of $\{f= 0 \}$, and moreover we have in a small neighbourhood of $a$ that all points of $\{f= 0 \}$ different from $a$ are smooth points of $\{f= 0 \}$.

\bigskip
Consider  $f \in \C[x_1, \dots, x_n] \setminus \C$ as a map $\C^n\to \C$ and fix $a\in \{f=0\} \subset \C^n$. We briefly define the Milnor fibre of $f$ at $a$ and its monodromy, as introduced by Milnor in \cite{Milnor}, where all details can be found.
Choose a small enough ball $B\subset \C^n$ with centre $a$; then $f|_{B\cap f^{-1}D^*}$ is a locally trivial $C^\infty$ fibration over a small enough pointed disc $D^*\subset \C\setminus 0\}$ with centre $0$. Choose now also $t\in D^*$; the diffeomorphism type (up to isotopy) of \lq the\rq\ {\em (local) Milnor fibre\index{Milnor fibre} $F_a:= f^{-1}(t)\cap B$ of $f$ at $a$} does not depend on $t$, and the counterclockwise generator of the fundamental group of $D^*$ induces linear automorphisms $M_i$ of the cohomology vector spaces $H^i(F_a,\C)$, called (algebraic) monodromy action\index{monodromy action} of $f$ at $a$. By a {\em monodromy eigenvalue\index{monodromy eigenvalue} of $f$ at $a$}
we mean an eigenvalue of  a least one of the $M_i$.
For surveys on this Milnor fibration and associated invariants, we refer to \cite{Bu1}\cite{LNS}\cite{Se}.

 The following properties are well known, and are either easy to see or shown in e.g. \cite{SGA7}\cite{Milnor}.

\begin{proposition}\label{monodromy properties} Let  $f \in \C[x_1, \dots, x_n] \setminus \C$. For $a\in \{f=0\}$, let $P_i(t)$ denote the characteristic polynomial of the monodromy automorphism $M_i$ on $H^i(F_a,\C)$.

(1) All monodromy eigenvalues are roots of unity;

(2)  $H^i(F_a,\C) =0$ for $i \geq  n$.

(3) If $f = \prod_j f_j^{N_j}$ is the  decomposition of $f$ in irreducible components and $d := \gcd_{j, a\in\{f_j=0\}}N_j$, then $P_0(t) = t^d - 1$.

(4) When $a$ is an isolated singularity of $\{f= 0 \}$, then $H^i(F_a,\C) = 0$ for $i \neq 0, n-1$. Moreover, $H^{n-1}(F_a,\C) \neq 0$ and $P_0(t) = t-1$.

(5) When $a$ is a smooth point of $\{f= 0 \}$, then $H^i(F_a,\C) = 0$ for $i>0$, and $P_0(t) = t-1$.
\end{proposition}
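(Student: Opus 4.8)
The plan is to prove the five assertions separately, since they rest on quite different inputs, and to lean on the cited literature for the deepest one. Statement~(1) is the \emph{monodromy theorem}: the local monodromy of a polynomial (or holomorphic germ) is quasi-unipotent, so all eigenvalues are roots of unity. I would either cite Grothendieck's local monodromy theorem (\cite{SGA7}), applied after realizing the Milnor fibration inside a proper family, or argue via an embedded resolution $h\colon Y\to(\C^n,a)$ as in Section~\ref{archimedean}: near a point of $h^{-1}(a)$ one has $f\circ h=u\prod_i y_i^{N_i}$, a monomial whose monodromy has finite order dividing $\lcm_i N_i$, and a description of $F_a$ glued from the pieces lying over the $E_j$ (essentially A'Campo's computation) then shows every eigenvalue on every $H^i(F_a,\C)$ is an $N_j$-th root of unity for some $j$. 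I expect this to be the only genuinely hard point.

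For~(2), note that for generic $t$ the Milnor fibre $F_a=f^{-1}(t)\cap B$ is a closed smooth analytic submanifold of the Stein ball $B$, hence itself a Stein manifold of complex dimension $n-1$; by the Andreotti--Frankel theorem it has the homotopy type of a CW complex of real dimension $\le n-1$, so $H^i(F_a,\C)=0$ for $i\ge n$ (see also \cite{Milnor}). For~(3), I would read off $\pi_0(F_a)$ from the homotopy exact sequence of the Milnor fibration $f\colon E\to D^*$, with $E$ the (connected) total space: $\pi_0(F_a)=\operatorname{coker}\bigl(f_*\colon\pi_1(E)\to\pi_1(D^*)=\Z\bigr)$. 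Since $H_1(E,\Z)$ is generated by the meridians of the branches $\{f_j=0\}$ through $a$, on which $f_*$ is multiplication by $N_j$, this cokernel is $\Z/d\Z$; so $F_a$ has exactly $d$ connected components, cyclically permuted by the monodromy (the connecting map being translation by $1$). Hence $M_0$ acts on $H^0(F_a,\C)\cong\C[\Z/d\Z]$ as translation by a generator, with characteristic polynomial $t^d-1$.

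For~(4) I take $n\ge2$ (for $n=1$ one only gets $P_0(t)=t^d-1$). Isolatedness forces $f$ to be reduced along every branch through $a$ --- otherwise each nearby point of a multiple branch would again be a singular point of $\{f=0\}$ --- so $d=1$ and $P_0(t)=t-1$ by~(3); in particular $F_a$ is connected. For the vanishing I would invoke Milnor's theorem that $F_a$ is $(n-2)$-connected, proved by Morse theory (it is built from one $0$-cell and cells of dimension $n-1$); combined with~(2) this forces $F_a\simeq\bigvee S^{n-1}$, whence $H^i(F_a,\C)=0$ for $i\ne0,n-1$, and the wedge is non-empty because $a$ is a non-smooth critical point ($\mu\ge1$), so $H^{n-1}(F_a,\C)\ne0$. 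For~(5), the convention that not all partial derivatives vanish at a smooth point means $df(a)\ne0$ with $f$ reduced near $a$; after an analytic coordinate change $f=x_1$, so $F_a=\{x_1=t\}\cap B$ is an open ball, hence contractible, giving $H^i(F_a,\C)=0$ for $i>0$ and $P_0(t)=t-1$.

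The main obstacle is~(1): quasi-unipotence of the monodromy is a substantial theorem, and unlike the other items it is not really ``easy to see'' --- I would present it as a citation, or via resolution as sketched above. After that, (4) is the only part requiring real work, namely Milnor's Morse-theoretic connectivity estimate; items (2), (3) and~(5) are then essentially formal.
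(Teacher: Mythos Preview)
Your proof is correct. The paper does not actually prove this proposition: it simply states the five items as ``well known'' and refers to \cite{SGA7} and \cite{Milnor}. Your sketches reconstruct precisely the arguments one finds there --- Stein/Andreotti--Frankel for~(2), the homotopy sequence of the Milnor fibration together with the meridian description of $H_1$ of a hypersurface complement for~(3), Milnor's $(n-2)$-connectivity theorem for~(4), and a local coordinate change for~(5); your handling of~(1) via either the local monodromy theorem or resolution is also the standard route. Your observation that~(4) as stated tacitly needs $n\ge 2$ (since for $n=1$ an isolated singular point has $P_0(t)=t^d-1$ with $d\ge 2$) is a correct caveat that the paper leaves implicit.
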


Kashiwara and Malgrange established a close relation between roots of the b-function and monodromy eigenvalues. In particular, these eigenvalues are completely determined by the b-function.

\begin{theorem}\label{Malgrange}\cite{Ka}\cite{Ma2} Let $f \in \C[x_1, \dots, x_n] \setminus \C$.  If $s_0$ is a root of $b_f$, then $e^{2\pi i s_0}$ is a monodromy eigenvalue of $f$ at some point of $\{f=0\}$. Conversely, each such eigenvalue is obtained this way.
\end{theorem}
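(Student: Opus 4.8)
The assertion is local: since $b_f=\lcm_{a}b_{f,a}$ and a monodromy eigenvalue of $f$ is by definition attached to some point $a\in\{f=0\}$, it suffices to show, for each $a$, that $\{e^{2\pi i s_0}:b_{f,a}(s_0)=0\}$ coincides with the set of eigenvalues of the monodromy on $\bigoplus_i H^i(F_a,\C)$. I would prove this through the $\mathcal D$-module machinery of Kashiwara and Malgrange, whose point is that one and the same object --- the Kashiwara--Malgrange $V$-filtration --- computes both the Bernstein--Sato polynomial and the nearby cycles carrying the monodromy.

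The first step is the graph embedding $\iota_f\colon\C^n\hookrightarrow\C^n\times\C_t$, $x\mapsto(x,f(x))$, and the holonomic $\mathcal D_{n+1}$-module $B_f:=(\iota_f)_+\mathcal O_{\C^n}=\bigoplus_{k\ge 0}\mathcal O_{\C^n}\,\partial_t^k\delta$, where $\delta=\delta(t-f)$ and holonomicity of $\mathcal O_{\C^n}$ (equivalently, the existence of $b_f$ quoted above) is what makes everything run. Under the correspondence $f^s\leftrightarrow\delta$, $s\leftrightarrow-\partial_t t$, $\partial_{x_i}\leftrightarrow\partial_{x_i}+(\partial_{x_i}f)\partial_t$, the module $\mathcal D_n[s]f^s$ becomes the submodule $\mathcal D_n[s]\,\delta\subset B_f$, and because $f\cdot f^s\leftrightarrow t\,\delta$, the polynomial $b_f(s)$ is exactly the minimal polynomial of $s=-\partial_t t$ acting on $\mathcal D_n[s]\,\delta\,/\,\mathcal D_n[s]\,t\,\delta$. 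This is the algebraic avatar of $b_f$ I would work with.

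Next I would invoke the $V$-filtration $V^\bullet B_f$ of $B_f$ along $t=0$: the unique exhaustive $\Q$-indexed filtration with $t\,V^\alpha\subset V^{\alpha+1}$, $\partial_t V^\alpha\subset V^{\alpha-1}$, each $V^\alpha$ coherent over the subring $V^0\mathcal D_{n+1}\subset\mathcal D_{n+1}$, and $\partial_t t+\alpha$ nilpotent on $\mathrm{gr}_V^\alpha B_f$; its existence, uniqueness and the rationality of the set of exponents rest on holonomicity together with Kashiwara's rationality theorem for $b$-functions. Two comparisons then have to be made. \emph{Algebraic comparison:} the roots of $b_f$ are, modulo $\Z$, precisely the $\alpha$ with $\mathrm{gr}_V^\alpha B_f\neq 0$ --- an internal computation in $\mathcal D_n[s]\,\delta$, using that $s=-\partial_t t$ has single generalized eigenvalue $\alpha$ on $\mathrm{gr}_V^\alpha$ and that $\delta$ generates $B_f$ over $\mathcal D_{n+1}$ (the distinguished factor $(s+1)$ and the $\Z$-ambiguity are harmless, since eigenvalues depend only on $\alpha$ modulo $\Z$). \emph{Topological comparison --- Riemann--Hilbert for nearby cycles:} applying the de Rham functor, the $V$-filtration on $B_f$ goes over to the filtration underlying Deligne's nearby-cycle functor on the perverse sheaf $\mathrm{DR}(B_f)=(\iota_f)_*\C_{\C^n}[n]$, so that $\mathrm{DR}(\mathrm{gr}_V^\alpha B_f)$ is, up to shift, the generalized $e^{2\pi i\alpha}$-eigenspace of the monodromy on the nearby-cycle complex $\psi_f\C_{\C^n}$, a complex supported on $\{f=0\}$.

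Finally I would assemble the chain of equivalences: $s_0$ is a root of $b_f$ $\iff$ (algebraic comparison) $\mathrm{gr}_V^\alpha B_f\neq 0$ for some $\alpha\equiv s_0\ (\mathrm{mod}\ \Z)$ $\iff$ (topological comparison) $\psi_{f,\lambda}\C_{\C^n}\neq 0$ with $\lambda=e^{2\pi i s_0}$ $\iff$ there is $a\in\{f=0\}$ with $\lambda$ an eigenvalue of the monodromy on $\bigoplus_iH^i(F_a,\C)$, the last step because the stalk of $\psi_f\C_{\C^n}$ at $a$ computes $H^\bullet(F_a,\C)$ compatibly with the monodromy action. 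Read forwards this gives the first assertion; read backwards --- starting from a monodromy eigenvalue $\lambda$, so $\psi_{f,\lambda}\C_{\C^n}\neq 0$, hence $\mathrm{gr}_V^\alpha B_f\neq 0$ for some $\alpha$ with $e^{2\pi i\alpha}=\lambda$, hence a root $s_0\equiv\alpha$ of $b_f$ with $e^{2\pi i s_0}=\lambda$ --- it gives the converse. I expect the genuine difficulty to lie in the topological comparison, together with the careful bookkeeping in the algebraic one: proving that the $\mathcal D$-module $V$-filtration is simultaneously compatible with $b_f$ and, under Riemann--Hilbert, with the topological nearby-cycle functor is precisely the hard technical core of the Kashiwara--Malgrange theory --- once holonomicity and rationality are in hand, the rest is formal.
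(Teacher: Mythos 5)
The paper states this result as a cited theorem of Kashiwara and Malgrange (\cite{Ka}, \cite{Ma2}) and does not reproduce a proof, so there is no internal argument to compare against. Your sketch is a correct outline of exactly the argument in those references — graph embedding, identification of $b_f$ with the minimal polynomial of $-\partial_t t$ on $\mathcal D_n[s]\delta/\mathcal D_n[s]t\delta$, the Kashiwara--Malgrange $V$-filtration on $B_f$, and the Riemann--Hilbert comparison $\mathrm{DR}(\mathrm{gr}_V^\alpha B_f)\simeq \psi_{f,e^{-2\pi i\alpha}}$ — and you correctly flag where the real work lies (sharpness of the $V$-filtration against $b_f$ and the compatibility of $\mathrm{DR}$ with nearby cycles), so I have nothing to object to beyond noting that the sign/shift conventions in the algebraic comparison (roots of $b_f$ versus jumps of $V^\bullet$, including the extra factor $s+1$) deserve the care you already gesture at.
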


There is also a local variant of this result. Combining it with Theorem \ref{poles archimedean2} yields the following relation between poles and eigenvalues.

\begin{corollary}\label{poles induce eigenvalues}
Let $f \in \R [x_1, \dots, x_n] \setminus \R$ and $\varphi: \R^n \to \C$ a $C^\infty$ function with compact support.
If $s_0$ is a pole of $Z(f,\varphi;s)$, then $e^{2\pi i s_0}$ is a monodromy eigenvalue of $f$ at some point of $\{f=0\}$.
\end{corollary}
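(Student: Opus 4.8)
The plan is to chain together the two structural results just established: Theorem~\ref{poles archimedean2}, which locates the poles of the archimedean zeta function among the numbers $\lambda-k$ with $\lambda$ a root of $b_f$ and $k\in\Z_{\geq 0}$, and Theorem~\ref{Malgrange}, which turns roots of $b_f$ into monodromy eigenvalues of $f$.

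Concretely, I would argue as follows. Let $s_0$ be a pole of $Z(f,\varphi;s)$. Since $f\in\R[x_1,\dots,x_n]\subset\C[x_1,\dots,x_n]$, the Bernstein--Sato polynomial $b_f$ is defined, and Theorem~\ref{poles archimedean2} gives $s_0=\lambda-k$ for some root $\lambda$ of $b_f$ and some $k\in\Z_{\geq 0}$. Then
\[
e^{2\pi i s_0}=e^{2\pi i\lambda}\cdot e^{-2\pi i k}=e^{2\pi i\lambda},
\]
because $k$ is an integer. By Theorem~\ref{Malgrange}, $e^{2\pi i\lambda}$ is a monodromy eigenvalue of $f$ at some point of $\{f=0\}\subset\C^n$; hence so is $e^{2\pi i s_0}$, which is the assertion.

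Two small remarks accompany this. First, since the roots of $b_f$ are negative rational numbers, the pole $s_0$ is automatically real, so that $\Re(s_0)=s_0$ and the conclusion indeed matches the $e^{2\pi i\Re(s_0)}$ of the general formulation of the conjecture. Second, if one wants the finer statement attaching the eigenvalue to a point of $\{f=0\}$ near the support of $\varphi$, one replaces Theorem~\ref{Malgrange} by its local version and Theorem~\ref{poles archimedean2} by a local analogue; for the global statement above this is not needed.

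The only point of care — and it is hardly an obstacle — is to make sure the object $b_f$ controlling the poles is the very same one whose roots Kashiwara--Malgrange relate to the Milnor monodromy. This is immediate, since $b_f$ depends only on $f$ as a polynomial over a field of characteristic zero, so the $b_f$ of $f$ viewed in $\R[x_1,\dots,x_n]$ agrees with the $b_f$ of $f$ viewed in $\C[x_1,\dots,x_n]$. Thus the corollary is a purely formal consequence of the two cited theorems.
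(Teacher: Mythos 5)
Your proof is correct and is essentially the paper's own argument: the paper derives this corollary in one line by combining Theorem~\ref{poles archimedean2} with Theorem~\ref{Malgrange} (or its local variant), exactly as you do. Your two remarks — that $s_0$ is automatically real so $\Re(s_0)=s_0$, and that $b_f$ is insensitive to passing from $\R$ to $\C$ — are sensible clarifications of points the paper leaves implicit.
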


\subsection{$p$-adic zeta functions}

We introduce briefly the $p$-adic  analogue of the archimedean zeta functions. We refer to \cite{PV} for an elaborate introduction, and to \cite{DenefBourbaki} and \cite{Meuser} for substantial survey papers.

We  replace $\R$ by the field of $p$-adic numbers\index{$p$-adic numbers} $\Q_p$.  We denote by $\mathrm{ord}_p(\cdot)$ and $\vert \cdot\vert_p=p^{-\mathrm{ord}_p(\cdot)}$ the $p$-order
 and the standard $p$-adic norm on $\Q_p$, respectively.
Recall that the ring of $p$-adic integers  $\Z_p =\{x\in \Q_p \mid  \mathrm{ord}_p(x)\geq 0\}$ is a discrete valuation ring with maximal ideal $p\Z_p=\{x\in \Q_p \mid  \mathrm{ord}_p(x) > 0\}$ and unit group $\Z_p^\times =\{x\in \Q_p \mid  \mathrm{ord}_p(x)= 0\}$.
Every $a\in \Q_p \setminus \{0\}$ can be written in a unique way  as $a=up^z$ with  $u \in \Z_p^\times$ and $z\in \Z$,  where then $\mathrm{ord}_p(a)=z$.

\begin{definition}
Let $f \in \Q_p [x_1, \dots, x_n] \setminus \Q_p$ and $\varphi$ a test function\index{test function} on  $\Q_p^n$, meaning now a locally constant function with compact support. The {\em $p$-adic Igusa zeta function\index{$p$-adic zeta function}\index{Igusa zeta function}\index{$p$-adic Igusa zeta function}} associated to $f$ and $\varphi$ is
$$
Z_p(s)=Z_p(f,\varphi;s) := \int_{\Q_p^n} |f(x)|_p^s \varphi(x) dx ,
$$
where $dx$ denotes the standard Haar measure on $\Q_p^n$ (such that $\Z_p^n$ has measure $1$), and $s\in \C$. Note that $|f(x)|_p^s = (p^{-s})^{\mathrm{ord}_p f(x)}$, implying that
$Z_p(s)$ is in fact a function in $p^{-s}$.

When $\varphi$ is the characteristic function of $\Z_p^n$ and $(p\Z_p)^n$, we denote $Z_p(f,\varphi;s)$ by $Z(f;s)$ and $Z_0(f;s)$, respectively.
\end{definition}

\begin{remark}
At first sight the test functions $\varphi$ above may seem quite different from those in the archimedean setting. Note first that, since $\Q_p^n$ is totally disconnected, there is no reasonable notion of derivative of a function $\Q_p^n  \to \C$.  Second, both the Schartz-Bruhat functions on $\R^n$ from Remark \ref{complex and Schwartz}(2) and  the locally constant functions with compact support on $\Q_p^n$ are both instances of the notion \lq Schwartz space\index{Schwartz space}\rq\ of functions $\K^n \to \C$ for a local field $\K$.  (For example a common feature is that both instances are invariant under Fourier transform.)
\end{remark}

\begin{remark}
The terminology in the literature is sometimes confusing. One uses the adjectives {\em $p$-adic}, {\em Igusa} and/or {\em local} in all possible combinations to indicate the zeta function $Z_p(f,\varphi;s)$. Here \lq local\rq\ refers to the fact that $\Q_p$ is a local field.

However, when it is clear that one works in the $p$-adic setting, the term  \lq local\rq\ sometimes refers to the fact that the integration domain is a smaller neighbourhood of the origin, typically meaning $Z_0(f;s)$, or, more generally, $Z_p(f,\varphi;s)$ with $\varphi$ the characteristic function of $(p^e\Z_p)^n$ for some $e\geq 1$.
\end{remark}

As in the real setting, one easily verifies that $Z_p(s)$ converges and is holomorphic in the region $\Re(s)>0$. What about meromorphic continuation? In particular, are there analogues of the two proofs in the archimedean setting?
\par
\smallskip
(1)  The proof using embedded resolution can readily be adapted to the $p$-adic setting.  (An extra specific reference for resolution in this setting is \cite[Theorem 2.2]{DvdD}.)

Now $h:Y \to \Q_p^n$ is an embedded resolution\index{embedded resolution} of $\{f=0\}$, with $Y$ an $n$-dimensional $p$-adic manifold, and the $E_j,j\in S,$ are again the irreducible components of $h^{-1}\{f=0\}$ with numerical data  $(N_j, \nu_j)$.
Since a compact $p$-adic manifold can be covered by finitely many {\em disjoint} (simultaneously open and closed) balls, a partition of the unity is not needed here. This time  $Z_p(s)$ turns out to be essentially  a finite sum of products of elementary integrals of the form
$$
\int_{p^e\Z_p}\vert y_j \vert_p^{N_js+\nu_j-1} dy_j   = \frac{(1-p^{-1})p^{-e(N_js+\nu_j)}}{1-p^{-(N_js+\nu_j)}},
$$
where the computation yielding the right hand side is an easy exercise, assuming that the integral converges, which is true when $\Re(s)>-\frac{\nu_j}{N_j}$.  For more details we refer to \cite{PV} or \cite{Igusa4}. We can conclude as follows.

\begin{theorem}\label{zetafunction is rational}
Let $f \in \Q_p [x_1, \dots, x_n] \setminus \Q_p$ and $\varphi: \Q_p^n \to \C$ a locally constant function with compact support. Then  $Z_p(f,\varphi;s)$ is a {\em rational function} in $p^{-s}$,  and it has a meromorphic continuation to $\C$ with poles contained in the locus where $p^{-s}=p^{\nu_j/N_j}$ for some $j\in S$.  Hence, in terms of $s$, the poles are of the form
$$
 -\frac{\nu_j}{N_j} + \frac{2\pi k}{(\ln p)N_j}i, \text{ with } \ j\in S \text{ and } k\in \Z \, .
$$
\end{theorem}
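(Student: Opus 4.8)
The plan is to run the $p$-adic analogue of the First Proof above, the main new feature being that a partition of unity gets replaced by a finite \emph{disjoint} ball decomposition. Fix an embedded resolution $h:Y\to\Q_p^n$ of $\{f=0\}$, with $E_j$, $j\in S$, the irreducible components of $h^{-1}\{f=0\}$ and numerical data $(N_j,\nu_j)$ as in (\ref{numerical data in local coordinates}). Since $h$ is proper and $\varphi$ has compact support, $K:=\mathrm{supp}(\varphi\circ h)\subseteq Y$ is compact; and since $h$ is an analytic isomorphism away from the measure-zero locus $h^{-1}\{f=0\}$, the $p$-adic change-of-variables formula gives
\[
Z_p(f,\varphi;s)=\int_{K}|(f\circ h)(y)|_p^{\,s}\,\bigl|\Jac_h(y)\bigr|_p\,(\varphi\circ h)(y)\,dy .
\]

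Next I would dissect $K$. Cover $K$ by finitely many charts $U_P$ on which (\ref{numerical data in local coordinates}) holds. Since $|\cdot|_p$ is locally constant on $\Q_p^\times$, the functions $y\mapsto|u_P(y)|_p$ and $y\mapsto|v_P(y)|_p$ are locally constant and nowhere zero on $U_P$, and $\varphi\circ h$ is locally constant. A compact $p$-adic manifold decomposes as a finite disjoint union of small balls, and I would pick such a decomposition of $K$ refined so that each ball $B$ lies in one chart $U_P$, is small enough that $|u_P|_p$, $|v_P|_p$ and $\varphi\circ h$ are constant on it, and in the coordinates of that chart equals a product $\prod_{i=1}^n p^{e_i}\Z_p$. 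On such a $B$, the factor $|u_P|_p^{\,s}$ is a fixed power of $p^{-s}$, the remaining unit and test-function values are constants, and the measure factorizes, so the contribution of $B$ to $Z_p(f,\varphi;s)$ is a constant multiple---the constant a monomial in $p^{-s}$---of
\[
\Bigl(\prod_{i\in S_P}\int_{p^{e_i}\Z_p}|y_i|_p^{\,N_is+\nu_i-1}\,dy_i\Bigr)\cdot\prod_{i\notin S_P}\mathrm{vol}\bigl(p^{e_i}\Z_p\bigr);
\]
for $\Re(s)>-\nu_i/N_i$ the displayed elementary integral equals $\tfrac{(1-p^{-1})\,p^{-e_i(N_is+\nu_i)}}{1-p^{-(N_is+\nu_i)}}$, a rational function of $p^{-s}$.

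Summing the finitely many ball-contributions, $Z_p(f,\varphi;s)$ is, on the half-plane $\Re(s)>0$ where the defining integral converges, a finite $\Q$-linear combination, with coefficients that are monomials in $p^{-s}$, of products of the above factors; hence it agrees there with a rational function $R\in\Q(p^{-s})$. Being meromorphic on all of $\C$, $R$ is the claimed meromorphic continuation. Its only possible poles come from the denominator factors $1-p^{-(N_js+\nu_j)}$, $j\in S$, and $1-p^{-(N_js+\nu_j)}=0$ exactly when $p^{-(N_js+\nu_j)}=1$, i.e.\ when $|p^{-s}|=p^{\nu_j/N_j}$, equivalently $s\in-\tfrac{\nu_j}{N_j}+\tfrac{2\pi}{(\ln p)N_j}i\,\Z$---precisely the asserted pole locus.

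The only real obstacle is the dissection step: exhibiting a finite disjoint ball decomposition of $K$ that is simultaneously subordinate to the chart cover, fine enough to trivialize the locally constant data $|u_P|_p$, $|v_P|_p$, $\varphi\circ h$, and compatible with the monomial normal form, so that each piece genuinely splits as a product of one-variable integrals over balls $p^{e_i}\Z_p$ with the Haar measure pulling back as a product. This rests on the standard structure theory of compact $p$-adic analytic manifolds; granting it, the rest is the geometric-series computation above. As in the archimedean case, the substantial input---existence of an embedded resolution over $\Q_p$---is imported, here from \cite{Hironaka} and \cite{DvdD}.
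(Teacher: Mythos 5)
Your proposal takes exactly the approach the paper indicates in its sketch: change of variables via an embedded resolution, replacement of the archimedean partition of unity by a finite disjoint decomposition of the compact $p$-adic manifold into balls that trivialize the locally constant unit factors, and the elementary one-variable computation $\int_{p^e\Z_p}|y|_p^{Ns+\nu-1}\,dy=\frac{(1-p^{-1})p^{-e(Ns+\nu)}}{1-p^{-(Ns+\nu)}}$. One small slip in the last paragraph: $p^{-(N_js+\nu_j)}=1$ is \emph{not} equivalent to $|p^{-s}|=p^{\nu_j/N_j}$ (the latter fixes only $\Re(s)$, not the discrete imaginary part); the correct rephrasing is $(p^{-s})^{N_j}=p^{\nu_j}$, but your final formula for the pole locus is the right one, so this is a cosmetic wording issue rather than a gap.
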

Comparing with the real case, starting with the basic values $-\frac{\nu_j}{N_j}$, we have \lq up and down\rq\ imaginary shifts instead of \lq left\rq\ real shifts.

\par
\smallskip
(2)  On the other hand, the proof via the Bernstein-Sato polynomial\index{Bernstein-Sato polynomial} and integration by parts completely breaks down in the $p$-adic setting.
In particular, there seems to be no link between poles of the $p$-adic Igusa zeta function and roots of $b_f$ or monodromy eigenvalues of $f$.
However, Igusa computed several complicated examples where he verified that the real part $\Re(s_0)$ of any pole $s_0$ of $Z_p(f,\varphi;s)$ is a root of $b_f$, and hence that also $e^{2\pi i \Re(s_0)}$ is a monodromy eigenvalue of $f$ \cite{Ig}, and he formulated such a general implication as a conjecture.

\subsection{The conjecture; $p$-adic version}
\begin{conjecture}[Monodromy conjecture]\index{monodromy conjecture}
Let $f\in \Q[x_1,\dots,x_n]\setminus \Q$. Then, for all but a finite number of $p$ and for all locally constant functions with compact support $\varphi: \Q_p^n \to \C$ we have the following. If $s_0$ is a pole of $Z_p(f,\varphi;s)$, then
\begin{enumerate}
\item[(1)] (standard) $e^{2\pi i \Re (s_0)}$ is a monodromy eigenvalue of $f:\C^n\to \C$ at some point of $\{f=0\}$;
\item[(2)] (strong)  $\Re(s_0)$ is a root of $b_f$.
\end{enumerate}
\end{conjecture}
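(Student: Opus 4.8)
Since the conjecture is open in general, my plan is to lay out the one systematic strategy available and to point to where it goes through and where it stalls. The starting point is Theorem~\ref{zetafunction is rational}: fix an embedded resolution $h\colon Y\to\Q_p^n$ of $\{f=0\}$ with components $E_j$, $j\in S$, and numerical data $(N_j,\nu_j)$. Denef's explicit formula writes $Z_p(f,\varphi;s)$, for $p$ large, as a finite $\Z[p^{-1}]$-combination of products $\prod_{i\in I}\frac{(p-1)p^{-N_i s-\nu_i}}{1-p^{-N_i s-\nu_i}}$ over subsets $I\subseteq S$, the coefficients counting $\F_p$-points on the strata $E_I^\circ=(\bigcap_{i\in I}E_i)\setminus(\bigcup_{k\notin I}E_k)$. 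Hence every actual pole $s_0$ has $\Re(s_0)=-\nu_j/N_j$ for some $j$. On the monodromy side the tool is A'Campo's formula: the zeta function of the local monodromy of $f$ at $a\in\{f=0\}$ equals $\prod_{j}(1-t^{N_j})^{-\chi(E_j^\circ\cap h^{-1}(a))}$ with $E_j^\circ=E_j\setminus\bigcup_{k\ne j}E_k$, so (once cancellation among these factors is ruled out) a component $E_j$ with $\chi(E_j^\circ\cap h^{-1}(a))\ne 0$ forces every $N_j$-th root of unity, in particular $e^{-2\pi i\nu_j/N_j}$, to be a monodromy eigenvalue of $f$ at $a$. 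The problem thus reduces to: whenever $-\nu_j/N_j$ is genuinely a pole of $Z_p$, exhibit some component $E_{j'}$ (possibly $E_j$ itself) with $\nu_{j'}/N_{j'}=\nu_j/N_j$, with $\chi(\overline{E_{j'}^\circ})\ne 0$, and lying over a single point of $\{f=0\}$.

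The two obstacles sit exactly here. First, \emph{cancellation}: a candidate value $-\nu_j/N_j$ need not be a pole at all, because the many terms in Denef's formula can cancel, so one must first determine which values survive — already a hard combinatorial–arithmetic question about the resolution. Second, \emph{vanishing Euler characteristics}: even when $-\nu_j/N_j$ is a pole, the component $E_j$ responsible for it may have $\chi(E_j^\circ)=0$, so A'Campo gives nothing directly from $E_j$ and one must produce a geometric ``companion'' $E_{j'}$ with the same quotient $\nu_{j'}/N_{j'}$ and nonzero Euler characteristic. Controlling both simultaneously demands fine information on the dual complex of the resolution and on how the numerical data propagate along it; in dimension $\ge 3$ neither the detection of real poles nor the existence of such a companion is understood, which is why the conjecture remains open. (The strong version would in addition require matching the pole with a root of $b_f$; by Theorem~\ref{Malgrange} this refines, rather than replaces, the eigenvalue statement, and is harder still, so I take the standard version as the realistic target.)

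For $n=2$ the plan does go through, and this is what I would carry out in detail. Work with the minimal embedded resolution: the exceptional curves are copies of $\P^1$ whose dual graph is a forest, and for such a curve $\chi(E_j^\circ)=2-\#\{\text{other components meeting }E_j\}$, so $\chi(E_j^\circ)=0$ precisely for curves lying on a chain between ``branch'' vertices (those meeting $\ge 3$ other components or the strict transform of $\{f=0\}$). The crux is then a direct analysis of the numerical data along such chains, using the adjacency relations forced by the self-intersections $E_j^2$ and the genus formula: one shows that a pole $s_0\notin\Z$ can never be ``trapped'' on a chain of $\chi=0$ curves, i.e. there is always a branch curve $E_{j'}$ with $\nu_{j'}/N_{j'}=-\Re(s_0)$ and $\chi(E_{j'}^\circ)<0$, automatically lying over one point; A'Campo then yields the eigenvalue. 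The remaining case $\Re(s_0)\in\Z$ is immediate, since $1=e^{2\pi i\Re(s_0)}$ is a monodromy eigenvalue at every point of $\{f=0\}$ by Proposition~\ref{monodromy properties}(3). I expect this chain analysis — excluding trapped poles on strings of $\chi=0$ curves, and the bookkeeping of which chains actually carry a pole — to be the main technical obstacle, and it is precisely the higher-dimensional analogue of this step that is missing in general.
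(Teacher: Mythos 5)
Your high-level plan — Denef's explicit formula to locate candidate real parts $-\nu_j/N_j$, A'Campo's formula to produce eigenvalues, and a dual-graph analysis via the adjunction relations to rule out poles ``trapped'' on chains of $\chi=0$ curves — is exactly the route the paper takes in \S3, and you are right to identify the two obstacles (cancellation of candidate poles, vanishing Euler characteristics) as the crux of the matter; the conjecture is indeed open beyond $n=2$, and your assessment of why is accurate.

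There is, however, a substantive imprecision in your $n=2$ sketch that you would have to repair. Writing $s_0=-c/d$ in lowest terms, you claim the argument produces a branch curve $E_{j'}$ with $\nu_{j'}/N_{j'}=-\Re(s_0)$ \emph{and} $\chi(E_{j'}^\circ)<0$. This over-constrains the companion. A'Campo's formula only requires $d\mid N_{j'}$ to make $e^{-2\pi i c/d}$ appear in the factor $(t^{N_{j'}}-1)^{-\chi(E_{j'}^\circ)}$, and to conclude it is genuinely an eigenvalue (rather than cancelled) one must control the \emph{aggregate} $\sum_{j\in S_e,\, d\mid N_j}\chi(E_j^\circ)$, not a single component. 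The paper's proof works precisely at this level: it considers the connected components $\mathcal{C}_\ell$ of $\bigcup_{j\in S_e,\,d\mid N_j}E_j$, shows using Lemma~\ref{relationsbetweennumericaldata}(1) that $\sum_{E_j\subset\mathcal{C}_\ell}\chi(E_j^\circ)=2-t\le 0$, and then forces strict negativity. In the order-$1$ case Proposition~\ref{non contribution} (derived from the adjunction relation, Lemma~\ref{relationsbetweennumericaldata}(3)) gives a curve with the right ratio and $r\ge 3$ neighbours; but in the order-$2$ case the relevant $\mathcal{C}_\ell$ is a \emph{chain} of curves with $d\mid N_j$, and the two end curves $E_1,E_m$ carrying $\chi<0$ need \emph{not} have $\nu/N=c/d$ — only divisibility. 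Loeser's inequalities (Lemma~\ref{inequalitiesbetweennumericaldata}), which you do not mention, are what turn ``$d\mid N_i$'' into ``$\alpha_i=0$'' along intersecting components and thereby pin down the chain structure. So the repair is: relax the companion condition from ratio equality to $d\mid N_{j'}$, and run the argument on connected components rather than individual curves.
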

Of course, (2) implies (1) by Theorem \ref{Malgrange}.
There is also a local version: assuming that $f(0)=0$,  real parts of poles of   $Z_0(f;s)$ similarly  induce monodromy eigenvalues of $f$ at points close to $0$, or are roots of $b_{f,0}$.

\begin{remark}
(1) A priori, there is no reason at all to expect these implications, except the possible analogy with the archimedean case.

(2) The implication in the statement might even be true for all primes $p$.

(3) The standard statement would provide a remarkable link between arithmetic and geometric/topological properties of the polynomial $f$, namely poles of $Z_p(f,\varphi;s)$ and monodromy eigenvalues of $f$.  To emphasize this, we mention that for fixed $p$, taking $f\in \Z[x_1,\dots,x_n]\setminus \Z$, knowing  $Z(f;s)$ is equivalent to the knowledge of the numbers of solutions of  $f=0$ over the finite rings $\Z/p^i\Z$  ($i>0$). More precisely, the formal power series
$$
P(T):= 1+\sum_{i>0} \#\{\text{solutions of }  f=0 \text{ over } \Z/p^i\Z\} p^{-ni}T^i
$$
satisfies the identity
$$
Z(f;s) = \frac{(p^{-s}-1)P(p^{-s})+1}{p^{-s}}.
$$
For an elaborate overview, we refer to \cite{PV}.
In particular, the poles of $Z(f;s)$ dictate the asymptotic behaviour of these numbers of solutions as $i$ tends to infinity, see \cite{Segers1} for a precise statement.

(4) The first (and up to now last ...) completely general result on the conjecture is the case of plane curves ($n=2$):  Loeser \cite{Loeser} settled the strong statement in 1982. Rodrigues \cite{Ro} gave a more elementary proof of the standard statement, of which we present a simplified version in \S\ref{sec:3}.
\end{remark}

\begin{example}\label{first example} For $f=(y^2-x^3)^2-x^6y$, we compute \lq both sides\rq\ of the monodromy conjecture.
The origin is the only singular point of the irreducible curve $\{f=0\}$, and its mimimal embedded resolution $h$  is obtained by six blow-ups. Denote by $E_1,E_2,\dots,E_6$ the consecutive exceptional curves and by $E_0$ the strict transform of $\{f=0\}$. In the figure we indicate how they intersect and we decorate each $E_j$ with its numerical data $(N_j,\nu_j)$.

\bigskip
\centerline{
\beginpicture

\setcoordinatesystem units <.5truecm,.5truecm>

\ellipticalarc axes ratio 2:3  70 degrees from -1 0 center at -1 3
\ellipticalarc axes ratio 2:3  -70 degrees from -1 0 center at -1 -3

\put {$\bullet$} at -1 0
 \put {$\{f=0\}$} at 1 2.6

\put {$\longleftarrow$} at 4 0
 \put {$h$} at 3.8 .5

\setcoordinatesystem units <.5truecm,.5truecm> point at -13 0

 \putrule from -4 -0.5  to 4 -0.5
 \putrule from -3  -1.5  to -3 3.5
 \putrule from 3  -1.5  to 3 3.5
 \putrule from 2 2.5 to 6 2.5
 \putrule from 2 1 to 6 1
\putrule from -6 2.5 to -2 2.5
 \putrule from -6 1 to -2 1

 \put {$E_0\, (1,1)$} at 6 2.9
 \put {$E_5\, (15,7)$} at 6 1.4
  \put {$E_3 \, (12,5)$}  at  -3.2  -2
  \put {$E_6 \, (30,13)$}  at  3.8  -2
  \put {$E_1\, (4,2)$} at -6 2.9
  \put {$E_2\, (6,3)$} at -6 1.4
   \put {$E_4\, (14,6)$}  at  0  -.1

\endpicture}

\bigskip
According to Theorem \ref{zetafunction is rational}, the real parts of the candidate poles of both $Z(f;s)$ and $Z_0(f;s)$ are the fractions $ -\nu_j/N_j\, ( 0\leq j \leq 6)$, being the six different values
$$-1, -1/2, -5/12, -3/7, -7/15, -13/30.$$
  However, it turns out that only $-1$, $-5/12$ and $-13/30$ are actually real parts of poles.
   More precisely, using Theorem \ref {denefformula}, one could compute
an explicit expression for $Z(f;s)$ and $Z_0(f;s)$ of the form
$$ \frac{A(p^s)}{(p^{1+s}-1)(p^{5+12s}-1)(p^{13+30s}-1)}, $$
where $A(p^s)$ is a polynomial in $p^s$.

Next, we compute the monodromy eigenvalues of $f$.  Proposition \ref{monodromy properties} already implies that the cohomology vector spaces $H^i(F_a,\C)$ vanish for all $i\geq 2$ and all $a\in \{f=0\}$, that also $H^1(F_a,\C)$ vanishes for $a\neq 0$, and that  $P_0(t)=t-1$ at any point of  $\{f=0\}$. For the most relevant datum, namely the characteristic polynomial $P_1(t)$ at the origin, there is a classical formula in terms of an embedded resolution of $\{f=0\}$ (in particular using the data $N_j$), due to A'Campo, see Theorem \ref{A'Campo}.  Here the formula yields concretely

$$P_1(t) = \frac{(t^{30}-1)(t^{12}-1)(t-1)}{(t^{15}-1)(t^{6}-1)(t^4-1)} = \Phi_{30}\cdot\Phi_{12}\cdot\Phi_{10}\cdot\Phi_{6},$$
where $\Phi_{\ell}$ is the $\ell$th cyclotomic polynomial.
Hence its $18$ zeroes are all the primitive roots of unity of order $30$, $12$, $10$ and $6$.
As predicted by the monodromy conjecture, the real parts of poles $-1$, $-5/12$ and $-13/30$  indeed induce monodromy eigenvalues.

The roots of the b-function of $f$ can be computed by a formula due to Yano \cite{Yano2}; they are
\begin{align*}
&-5/12, -13/30, -17/30, -7/12, -19/30, -7/10, -23/30, -5/6, -9/10, -11/12
\\
 & -29/30, -1, -31/30, -13/12, -11/10, -7/6, -37/30, -13/10, -41/30
\end{align*}
(all of multiplicity 1). So, as a finer result, $-1$, $-5/12$ and $-13/30$  are roots of $b_f$.

\bigskip
We observe here several facts that will turn out to be typical or even special cases of theorems.

(1) The fact that the \lq false\rq\ candidates $-\nu_1/N_1=-\nu_2/N_2=-1/2$, $-\nu_4/N_4=-3/7$ and $-\nu_5/N_5=-7/15$ cancel, is related to the fact that the exceptional curves $E_1$, $E_2$, $E_4$ and $E_5$ intersect other components  exactly once or twice.

(2) These candidates $-1/2$, $-3/7$ and $-7/15$ do {\em not} induce monodromy eigenvalues.
So their cancellation  seems strongly related to the monodromy conjecture.

(3) There are a lot more monodromy eigenvalues (as well as roots of the b-function) than real parts of poles.
\end{example}

\section{More \lq Igusa type\rq\ zeta functions and formulas via resolution}
\label{sec:2}

Unlike for the archimedean zeta functions, there is an explicit formula for the $p$-adic zeta function in terms of a given embedded resolution.  That formula leads to a kind of geometric specialization of the $p$-adic zeta function, the {\em topological zeta function}.  On the other hand we have the {\em motivic zeta function}, which is some geometric upgrade of the $p$-adic one. One can formulate an analogous monodromy conjecture for these zeta functions.

From now on we will use repeatedly the algebro-geometric version of embedded resolution, over various fields \cite{Hironaka}. We fix associated notation and terminology for the rest of the paper.

\begin{note}\label{notation embedded resolution}
Let $K$ be a field of characteristic zero and $f\in K[x_1,\dots,x_n]\setminus K$. Let $h:Y\to \A_K^n$ be an embedded resolution\index{embedded resolution}\index{resolution of singularities} of  $\{f=0\}$, defined over $K$.  So $Y$ is an $n$-dimensional smooth algebraic variety, $h^{-1}\{f=0\} = \cup_{j\in S}E_j$ is a simple normal crossings  divisor\index{simple normal crossings divisor}, that is, its irreducible components $E_j$ are nonsingular hypersurfaces that intersect transversally, and the restriction of $h$ to $Y\setminus h^{-1}\{f=0\}$ is an isomorphism.
(One could require that $h$ is moreover an isomorphism outside the inverse image of the singular locus of $\{f=0\}$.)
We write $S=S_e\sqcup S_s$, where $S_e$ ranges over the exceptional components and $S_s$ over the components of the strict transform of $\{f=0\}$.

Moreover, $Y$, $h$ and all $E_j$ are defined over $K$.  To each $E_j$ one associates its {\em numerical data\index{numerical data}} $(N_j, \nu_j)$, where $N_j$ and $\nu_j -1$ are the multiplicities of $E_j$ in the divisor of zeroes of $f\circ h$ and of the $n$-form $h^*(d_1\wedge\dots\wedge dx_n)$, respectively. In other words:
$$
\div(f\circ h)=\sum_{j\in S} N_j E_j  \quad  \text{ and }\quad \div\big(h^*(dx_1\wedge\dots\wedge dx_n)\big)= \sum_{j\in S} (\nu_j -1) E_j.
$$
 Also, for $I \subset S$, we denote $E_I=\cap_{i\in I}E_i$ and $E_I^\circ=E_I\setminus \cup_{k\not\in I}E_k$. Hence $Y$ equals the disjoint union of all $E_I^\circ$.
Note that, in particular, $E_\emptyset^\circ=Y\setminus \cup_{j\in S}E_j$ and that it is isomorphic to $\A_K^n \setminus \{f=0\}$.

To simplify notation, when $I=\{i\}$, we put $E_i:=E_{\{i\}}$ and $E_i^\circ:= E_{\{i\}}^\circ$.
\end{note}

\begin{remark} (1) Denote by $K_{\bullet}$ de canonical divisor\index{canonical divisor} (class) of a normal variety. In fact, $\div(h^*(dx_1\wedge\dots\wedge dx_n))$ equals the {\em relative canonical divisor\index{relative canonical divisor}} $K_h$, which is the unique representative of the divisor class $K_Y-h^*K_{\A^n}$ with support in the exceptional locus of $h$ (which is in this case just $K_Y$ since  $K_{\A^n}$ is trivial).

(2) The numerical data appear in local expressions for $f\circ h$ and for the Jacobian determinant of $h$, exactly as in (\ref{numerical data in local coordinates}).

(3) In the context of singularity germs, we rather consider $\{f=0\}$ only in a subset $U$ of affine space. Then $h$ is map from $Y$ to $U$, for which we use all notations above.
\end{remark}

\begin{remark}
According to e.g. \cite{Hironaka}, one can always construct some embedded resolution  as a composition of blow-ups (with so-called {\em admissible} centre).
For curves ($n=2$), {\em every} embedded resolution is a composition of point blow-ups, and there is a unique minimal embedded resolution, such that every other one is obtained from it by further point blow-ups.  For $n\geq 3$ one needs in general blow-ups with centres of dimension up to $n-2$, and a reasonable notion of minimal embedded resolution, in the sense that all other ones factor through it,  does not exist in general.
\end{remark}

\subsection{Denef's formula }

Consider an embedded resolution $h:Y\rightarrow \Q_p^n$ of $f\in \Q_p[x_1,\dots,x_n]\setminus \Q_p$, where now we consider $\Q_p^n$ as the algebro-geometric affine space   $\A_{\Q_p}^n$, rather than as a $p$-adic manifold. For the subtleties related to this different point of view, we refer to  \cite{PV}. We use notation from Note \ref{notation embedded resolution}.

We will need  the notion of reduction mod $p$\index{reduction mod $p$}: to any algebraic variety or scheme $Z$ over 
 $\Q_p$, one associates a well-defined variety/scheme over $\mathbb{F}_p$, the {\em reduction} mod $p$ of $Z$,  denoted by $\overline{Z}$.
(For example, when $Z$ is the hypersurface in $\A_{\Q_p}^d$ defined by a nonconstant polynomial $g \in \Q_p[x_1,\dots, x_d]$, there is a unique integer $m$ such that $p^m g \in \Z_p[x_1,\dots, x_d]\setminus p\Z_p[x_1,\dots, x_d]$, and then $\overline{Z}$ is the hypersurface in $\A_{\F_p}^d$ defined by the polynomial obtained from $p^mg$ by reducing all its coefficients mod $p$.)
 Similarly, there is a well-defined notion of reduction mod $p$ for any morphism $g$ of 
 $\Q_p$-schemes, being a morphism $\overline{g}$ of the corresponding $\mathbb{F}_p$-schemes. See for example \cite{Denef1} and the references therein.

Now for $I \subset S$, we denote similarly  $\overline{E_I}^\circ=\overline{E_I}\setminus \cup_{k\not\in I}\overline{E_k}$.

	\begin{theorem}[Denef's formula \cite{Denef1}]\index{Denef's formula}
		\label{denefformula}
		\noindent
Let $f\in \Q[x_1,\dots,x_n]\setminus\Q$. Let $h:Y\rightarrow \A_{\Q}^n$ be an embedded resolution of $\{f=0\}$, that is defined over $\Q$.

For any prime $p$, we can also consider $f$ in $\Q_p[x_1,\dots,x_n]$ and view $Y$ and $h$ as defined over $\Q_p$ (technically, one performs a base change). Then, for all but finitely many $p$, we have that
\begin{equation}\label{Denef's formula}
			Z(f;s)=\frac{1}{p^n}\sum_{I\subset S}\sharp\big(\overline{E_I}^\circ(\F_p)\big)\prod_{i\in I}\frac{p-1}{p^{\nu_i+N_i s}-1},
\end{equation}
where $\sharp\big(\overline{E_I}^\circ(\F_p)\big)$ is the number of $\F_p$-points in $\overline{E_I}^\circ$.
	\end{theorem}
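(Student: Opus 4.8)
The plan is to combine the $p$-adic change of variables formula along $h$ with a \lq good reduction mod $p$\rq\ argument, reducing the computation of $Z(f;s)$ to a sum of elementary monomial integrals indexed by the residue discs of the resolution space.

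The crucial first step is to \emph{spread out}. All the data of Note \ref{notation embedded resolution} — $f$, $Y$, $h$, the components $E_j$, their numerical data $(N_j,\nu_j)$, and the local expressions $f\circ h=u_P\prod_{i\in S_P}y_i^{N_i}$ and $\Jac_h=v_P\prod_{i\in S_P}y_i^{\nu_i-1}$ with the units $u_P,v_P$ — are already defined over $\Z[1/M]$ for a suitable $M\in\Z_{>0}$. Now smoothness of the total space over the base, the simple normal crossings property of $\bigcup_j E_j$ with the prescribed multiplicities, invertibility of the $u_P,v_P$ on their charts, properness of the model of $h$, and the compatibility of reduction with intersections, $\overline{E_I}=\bigcap_{i\in I}\overline{E_i}$ for all $I\subset S$ (so in particular $\overline{E_\emptyset^\circ}\cong\A^n_{\F_p}\setminus\{\bar f=0\}$), are all open conditions that hold over $\operatorname{Spec}\Q$; after enlarging $M$ they therefore hold over $\operatorname{Spec}\Z[1/M]$. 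Hence for every prime $p\nmid M$ one obtains a smooth proper $\Z_p$-model $\tilde h\colon\mathcal Y\to\A^n_{\Z_p}$ of $h$ whose reduction mod $p$ is again an embedded resolution of $\{\bar f=0\}$, with unchanged numerical data and with strata $\overline{E_I}^\circ$ as in the statement. This is the genuine technical content of the theorem; what follows is routine.

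Fix $p\nmid M$. Since $h$ is proper and restricts to an isomorphism over $\A^n\setminus V$ for a suitable proper closed subvariety $V$ (e.g.\ the singular locus of $\{f=0\}$), the map $\mathcal Y(\Z_p)\to\Z_p^n$ is surjective and bijective away from $\tilde h^{-1}(V)(\Z_p)$, which, lying inside a proper closed subvariety of the smooth $n$-dimensional $\mathcal Y$, has measure zero. The $p$-adic change of variables formula (with the Jacobian factor $|\Jac_h|_p$) then gives
$$
Z(f;s)=\int_{\Z_p^n}|f(x)|_p^s\,dx=\int_{\mathcal Y(\Z_p)}|f(h(y))|_p^s\,|\Jac_h(y)|_p\,dy .
$$
The reduction map $\mathcal Y(\Z_p)\to\overline Y(\F_p)$ is surjective (smoothness and Hensel), its fibre $D_a$ over a point $a$ is a \lq residue disc\rq\ of measure $p^{-n}$, and each $a$ lies in exactly one $\overline{E_I}^\circ$; partitioning $\mathcal Y(\Z_p)=\bigsqcup_{I\subset S}\bigsqcup_{a\in\overline{E_I}^\circ(\F_p)}D_a$ accordingly gives
$$
Z(f;s)=\sum_{I\subset S}\ \sum_{a\in\overline{E_I}^\circ(\F_p)}\int_{D_a}|f\circ h|_p^s\,|\Jac_h|_p\,dy .
$$

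It remains to evaluate the inner integrals. For $a\in\overline{E_I}^\circ$, choose local coordinates $y_1,\dots,y_n$ at a lift of $a$ with the closure of $E_i$ cut out by $y_i=0$ for $i\in I$ and with $a$ at the origin; then $D_a=(p\Z_p)^n$, the units $u_P,v_P$ have $p$-adic absolute value identically $1$ on $D_a$ (their reductions do not vanish at $a$), and the integral factors as $\prod_{i\in I}\int_{p\Z_p}|y_i|_p^{N_is+\nu_i-1}\,dy_i\cdot\prod_{i\notin I}\int_{p\Z_p}dy_i$. Using $\int_{p\Z_p}|y|_p^{t-1}\,dy=(1-p^{-1})/(p^{t}-1)$ and $\int_{p\Z_p}dy=p^{-1}$, this equals $p^{-(n-|I|)}\prod_{i\in I}(1-p^{-1})/(p^{N_is+\nu_i}-1)$. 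Summing over the $\sharp(\overline{E_I}^\circ(\F_p))$ discs in each stratum, then over $I$, and rewriting $p^{-(n-|I|)}(1-p^{-1})^{|I|}=p^{-n}(p-1)^{|I|}$, yields precisely formula (\ref{Denef's formula}). The only substantial obstacle is the spreading-out step, where one must simultaneously control smoothness, the SNC structure and numerical data, invertibility of the auxiliary units, properness, and compatibility of reduction with the strata $\overline{E_I}^\circ$, each of which excludes only finitely many $p$; the remaining $p$-adic analysis is the elementary disc computation above. Full details are in \cite{Denef1}, with an exposition in \cite{PV}.
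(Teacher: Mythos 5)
Your argument is correct and is precisely the standard proof of Denef's formula, which the paper does not reproduce — it states the theorem with a citation to \cite{Denef1} and records the good‑reduction condition and the elementary disc integral only in the surrounding discussion (Remark~\ref{generalDenef} and the lines preceding Theorem~\ref{zetafunction is rational}). Your reconstruction matches that outline: spreading out to a smooth proper model over $\Z[1/M]$ with good reduction (the only serious step, correctly flagged as such), the $p$‑adic change of variables $Z(f;s)=\int_{\mathcal Y(\Z_p)}|f\circ h|_p^s|\Jac_h|_p\,dy$ justified via properness and birationality, the partition of $\mathcal Y(\Z_p)$ into residue discs $D_a$ via smoothness and Hensel, the local monomial form of $f\circ h$ and $\Jac_h$ with unit factors of absolute value $1$ on $D_a$, and the computation $\int_{p\Z_p}|y|_p^{t-1}dy=(1-p^{-1})/(p^t-1)$, $\int_{p\Z_p}dy=p^{-1}$, giving each disc in the stratum indexed by $I$ the value $p^{-n}\prod_{i\in I}\frac{p-1}{p^{\nu_i+N_is}-1}$. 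Summing over the $\sharp\big(\overline{E_I}^\circ(\F_p)\big)$ discs per stratum and over $I$ yields \eqref{Denef's formula} exactly; there is no gap.
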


\begin{remark}\label{generalDenef}
(1) More precisely, for any given $p$, Denef defines when $h$ (considered over $\Q_p$) has {\em good reduction} mod $p$\index{good reduction mod $p$}: this means that $\overline{Y}$ and all $\overline{E_i}$ are smooth, $\cup_{i\in S} \overline{E_i}$ has normal crossings, and $\overline{E_i}$ and $\overline{E_j}$ have no common components for $i\neq j$.
He proves the formula above when $h$ has good reduction mod $p$, and shows that, starting with $f$ and $h$ over $\Q$, the resolution  $h$ has good reduction mod $p$ for all but finitely many $p$.
(Note that for a fixed prime $p$ and a given $f$ over $\Q_p$, it is possible that there exists no embedded resolution of $f$ with good reduction mod $p$.)

(2) There is a more general version for $Z_p(f,\varphi;s)$ if $\varphi$ is the characteristic function of a union of polydiscs $(a_1+p\Z_p,\dots,a_n+p\Z_p) \subset \Z_p^n$ (with $a_1,\dots,a_n \in \{0,1,\dots, p-1\}$); then  $\sharp(\overline{E_I}^\circ(\F_p))$ is replaced in (\ref{Denef's formula}) by the number of $\F_p$-points  in $\overline{E_I}^\circ$ that are mapped by $\overline{h}$ to one of the $(\overline{a_1},\dots, \overline{a_n}) \in \F_p^n$.
 \end{remark}

\begin{example}\label{example} Take $f=y^3-x^5$. The minimal embedded resolution $h:Y\to \A^2$ of $\{f=0\}$ is obtained by four blow-ups. Denote by $E_1,E_2,E_3,E_4$ the consecutive exceptional curves and by $E_0$ the strict transform of $\{f=0\}$. In the figure we indicate how they intersect and we decorate each $E_i$ with its numerical data $(N_i,\nu_i)$.

\bigskip
\centerline{
\beginpicture

\setcoordinatesystem units <.5truecm,.5truecm>

\ellipticalarc axes ratio 2:3  70 degrees from 0 0 center at 0 3
\ellipticalarc axes ratio 2:3  -70 degrees from 0 0 center at 0 -3

\put {$\bullet$} at 0 0
 \put {$\{f=0\}$} at 2 2.6

\put {$\longleftarrow$} at 4.5 0
 \put {$h$} at 4.5 .5

\setcoordinatesystem units <.5truecm,.5truecm> point at -13 0

 \putrule from -5 0  to 5 0
 \putrule from  0  -1.5 to 0 2.5
 \putrule from -3.5  -1.5  to -3.5 2.5
 \putrule from 3.5  -1.5  to 3.5 3
 \putrule from 7.5 2 to 2.5 2

 \put {$E_1$} at 6 2.5       \put{$(3,2)$}  at 6 1.5
  \put {$E_2 \, (5,3)$}  at  -3.2  -1.9
  \put {$E_3 \, (9,5)$}  at  3.8  -1.9
  \put {$E_4$} at -5.4  .4     \put{$(15,8)$}  at -5.4  -.5
   \put {$E_0\, (1,1)$}  at  .3  -1.9

\endpicture}

\bigskip

We describe the ten terms in formula (\ref {denefformula}), omitting for simplicity the factor $\frac{1}{p^2}$.  There are four terms with $|I|=2$, corresponding to the intersection points
$E_1\cap E_3$, $E_3\cap E_4$, $E_0\cap E_4$ and $E_2\cap E_4$:
\begin{align*}
&1\frac{(p-1)^2}{(p^{2+3s}-1)(p^{5+9s}-1)}, \qquad  &1\frac{(p-1)^2}{(p^{8+15s}-1)(p^{5+9s}-1)}, \\
&  1\frac{(p-1)^2}{(p^{8+15s}-1)(p^{1+s}-1)}, \qquad & 1\frac{(p-1)^2}{(p^{8+15s}-1)(p^{3+5s}-1)},
\end{align*}
respectively.
There are five terms with $|I|=1$, corresponding to the \lq open parts\rq\ $E_i^\circ$ for $i\in\{0,\dots,4\}$:
$$ (p-1) \frac{p-1}{p^{1+s}-1},  p \frac{p-1}{p^{2+3s}-1},  p \frac{p-1}{p^{3+5s}-1},  (p-1) \frac{p-1}{p^{5+9s}-1},  (p-2) \frac{p-1}{p^{8+15s}-1},
$$
respectively.
Finally, for $I=\emptyset$, we have the contribution of $E_\emptyset^\circ \cong \A^2 \setminus \{f=0\}$, being $p^2 - p$.

Adding everything, we expect to obtain $(p^{1+s}-1)(p^{2+3s}-1)(p^{3+5s}-1)(p^{5+9s}-1)(p^{8+15s}-1) $ as common denominator, but it is an exercise to check that a substantial simplification occurs: after cancellation, 	$Z(f;s)$ turns out to be of the form
$$ \frac{A(p^s)}{(p^{1+s}-1)(p^{8+15s}-1)}, $$
where $A(p^s)$ is a polynomial in $p^s$ (of degree 16).
We will see later that this cancellation phenomenon is strongly related to the monodromy conjecture.

According to Remark \ref{generalDenef}(2), one computes $Z_0(f;s)$ similarly.  Out of the above ten terms, we just omit the terms coming from $E_\emptyset^\circ$ and $E_0^\circ$.
\end{example}

\begin{remark}  The example above is pretty easy, in the sense that one can easily verify that $h$ has good reduction mod $p$ (see Remark \ref{generalDenef}(1)) for all $p$.  In general there will be primes $p$ for which $h$ does not have good reduction mod $p$, and it is quite difficult to determine these primes.
\end{remark}

\subsection{The topological zeta function}\label{subsec:topological zeta function}

We now indicate a funny heuristic argument to compute the limit of the expression (\ref{Denef's formula}), when $p$ tends to $1$ (whatever that means \dots).
First, using l'H\^opital's rule, $\frac{p-1}{p^{\nu_i+N_i s}-1}$ tends to $\frac{1}{\nu_i+N_i s}$. 	
More challenging, by Grothendieck's trace formula,  $\sharp (\overline{E_I}^\circ(\F_p))$  can be written as an alternating sum of traces of the $p$th power Frobenius operator, acting on adequate $\ell$-adic cohomology groups of $\overline{E_I}^\circ$ \cite{Denef2}. The limit when $p$ tends to $1$ of this operator is morally the identity operator, and then that alternating sum tends to the alternating sum of the dimensions of those cohomology groups, that is, to an $\ell$-adic Euler characteristic. Finally, a comparison theorem motivates to consider the usual topological Euler characteristic\index{Euler characteristic} $\chi(E_I^\circ)$ of the complex points of $E_I^\circ$ as the limit of $\sharp (\overline{E_I}^\circ(\F_p))$.
	We conclude that heuristically

	\[
	\lim_{p\rightarrow 1} Z(f;s)=\sum_{I\subset S}\chi(E_I^\circ)\prod_{i\in I}\frac{1}{\nu_i+N_is}.
	\]

\noindent
Denef and Loeser were inspired by this heuristic argument to define a new singularity invariant for any $f\in \C[x_1,\dots,x_n]\setminus \C$ \cite{DenefLoeser1}.

	\begin{definition}
		Let $f\in \C[x_1,\dots,x_n]\setminus \C$ and choose an embedded resolution $h:Y\rightarrow \A_\C^n$ of $\{f=0\}$, for which we use notation from Note \ref{notation embedded resolution}.
The (global) {\em topological zeta function of $f$\index{topological zeta function}} is
\begin{equation}
Z_{\top}(f;s) :=	\sum_{I\subset S}\chi(E_I^\circ)\prod_{i\in I}\frac{1}{\nu_i+ N_is}.
\end{equation}
The {\em  local topological zeta function\index{local topological zeta function}} of $f$ at $a\in \{f=0\}$ is
\begin{equation}
Z_{\top,a}(f;s) :=	\sum_{I\subset S}\chi(E_I^\circ \cap h^{-1}\{a\})\prod_{i\in I}\frac{1}{\nu_i+ N_is}.
\end{equation}
	\end{definition}
	
	\begin{remark}
It is not clear that the topological zeta function is well-defined, i.e., that it is independent of the chosen embedded resolution. Denef and Loeser proved this in  \cite{DenefLoeser1} by turning the heuristic discussion above into an exact argument, using $\ell$-adic interpolation.  An easier and more geometric approach is to use the {\em weak factorization theorem\index{weak factorization theorem}} \cite{AKMW}\cite{Wl1} (that was not known in 1991). It reduces the problem to showing that the defining expression of $Z_{\top}(f;s)$ is invariant under an admissible blow-up, which is an easy calculation.
	\end{remark}

\begin{example}\label{extop}(continuing Example \ref{example})
We compute $Z_{\top}(f;s)$ and $Z_{\top,0}(f;s)$  for $f=y^3-x^5$ using their defining formulas. Noting that $\chi(E_0^\circ) = \chi(E_3^\circ) = \chi(E_\emptyset^\circ) = 0$, both are equal to
\begin{align*}
&\frac 1{(2+3s)(5+9s)} + \frac 1{(8+15s)(5+9s)} + \frac 1{(8+15s)(1+s)} + \frac 1{(8+15s)(3+5s)} \\
&+ \frac 1{2+3s} + \frac 1{3+5s} + \frac{-1}{8+15s}.
\end{align*}
After simplification we obtain that
$$Z_{\top}(f;s) = Z_{\top,0}(f;s)  = \frac{8+7s}{(1+s)(8+15s)}.$$
This could also be achieved by \lq letting $p$ tend to $1$\rq\ in the final expressions for $Z(f;s)$ and  $Z_0(f;s)$ in Example \ref{example}.
\end{example}

\noindent	
The topological zeta function is an interesting singularity invariant on its own, and also a useful test case for studying (the poles of) the $p$-adic Igusa zeta function.
Despite its name, it is an analytic, but not a topological singularity invariant (except in the case of curves), see \cite{ACLM0}.
		
	\begin{problem}
It is a challenging open problem to give an intrinsic definition of the topological zeta function.
	\end{problem}

There is an obvious formulation for an analogous (global and local) monodromy conjecture for the topological zeta function  \cite{DenefLoeser1}.

\begin{conjecture}\label{mon conj topological}\index{monodromy conjecture}
Let $f\in \C[x_1,\dots,x_n]\setminus \C$.
\begin{enumerate}
\item[(1)]  If $s_0$ is a pole of $Z_{\top}(f;s)$, then $e^{2\pi i s_0}$ is a monodromy eigenvalue of $f:\C^n\to \C$ at some point of $\{f=0\}$.
\item[(2)] Assume that $f(0)=0$.  If $s_0$ is a pole of $Z_{\top,0}(f;s)$, then $e^{2\pi i s_0}$ is a monodromy eigenvalue of $f:\C^n\to \C$ at some point of $\{f=0\}$ close to $0$.
\end{enumerate}
\end{conjecture}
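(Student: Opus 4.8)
In full generality Conjecture~\ref{mon conj topological} is open for $n\geq 3$, and no strategy is known that would settle it; the plan below concerns the one completely understood case, $n=2$ (plane curves), following the line of Loeser~\cite{Loeser} and the more elementary argument of Rodrigues~\cite{Ro} that \S\ref{sec:3} will spell out. Fix the minimal embedded resolution $h\colon Y\to\A^2_\C$ of $\{f=0\}$: its dual graph is a tree and every exceptional component is a $\P^1$. By the defining formula every pole of $Z_{\top}(f;s)$ lies in the finite set $\{-\nu_j/N_j\mid j\in S\}$, and for $Z_{\top,0}(f;s)$ one may keep only the $j$ with $0\in h(E_j)$. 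So it suffices, for each such value $s_0$, to assume $s_0$ is an actual pole and produce a point $a\in\{f=0\}$ (close to $0$ in the local statement) at which $e^{2\pi i s_0}$ is a monodromy eigenvalue. Two cases are immediate: if $s_0=-1$ then $e^{2\pi i s_0}=1$ is always an eigenvalue by Proposition~\ref{monodromy properties}(3); and if $s_0=-1/m$ with $m$ the multiplicity of a branch of $\{f=0\}$ --- in particular whenever $s_0=-\nu_j/N_j$ with $E_j$ a strict transform component, where $\nu_j=1$ --- then at a smooth point $a$ of that branch (arbitrarily close to $0$ when the branch passes through $0$) one has $f=u\cdot g^{m}$ locally, the Milnor fibre is $m$ points and $e^{2\pi i s_0}$ is a root of $P_0(t)=t^m-1$. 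This leaves the core case: $E_j$ exceptional, $s_0=-\nu_j/N_j$.

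On the eigenvalue side I would use A'Campo's formula (Theorem~\ref{A'Campo}): $\zeta_{f,a}(t)=\prod_{i\in S}(1-t^{N_i})^{-\chi(E_i^\circ\cap h^{-1}(a))}$, together with the elementary half of A'Campo's theorem, namely that every zero and every pole of $\zeta_{f,a}(t)$, as well as $t=1$, is a monodromy eigenvalue of $f$ at $a$. (This half needs no non-cancellation: the monodromy zeta function is an alternating product of characteristic polynomials, and the set of eigenvalues is closed under inversion because the monodromy is defined over $\Q$.) Taking $a=h(E_j)$, so that $E_j\subset h^{-1}(a)$ and $E_j^\circ\cap h^{-1}(a)=E_j^\circ$, and writing $\zeta:=e^{2\pi i s_0}$, a root of $1-t^{N_j}$, it is then enough to show that $\zeta$ is not completely cancelled in $\zeta_{f,h(E_j)}(t)$, i.e. that $\sum_{i:\zeta^{N_i}=1}\chi(E_i^\circ\cap h^{-1}(a))\neq 0$.

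The crux, and what I expect to be the main obstacle, is a combinatorial comparison on the tree of the minimal resolution between the two conditions ``$s_0=-\nu_j/N_j$ is a pole of $Z_{\top}$'' and ``$\zeta$ survives in A'Campo's product.'' The engine is the additivity $N_i=N_{i'}+N_{i''}$, $\nu_i=\nu_{i'}+\nu_{i''}$ that holds whenever $E_i$ is created by blowing up $E_{i'}\cap E_{i''}$, combined with the tree structure and the monotonicity of the ratios $\nu_i/N_i$ along chains between rupture vertices. Since $E_j\cong\P^1$ meets the rest of the divisor in $k_j$ points, $\chi(E_j^\circ)=2-k_j$, giving the dichotomy $k_j=2$ (so $\chi(E_j^\circ)=0$ and $E_j$ contributes nothing by itself to $Z_{\top}$) versus $k_j\neq 2$ (so $\chi(E_j^\circ)\neq 0$). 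One then proves, by induction on the number of blow-ups: (i) if $k_j=2$, the candidate $-\nu_j/N_j$ is a pole of $Z_{\top}$ only if it equals $-\nu_k/N_k$ for some $E_k$ with $k_k\neq 2$, and one falls back on the $E_k$-contribution; (ii) if $k_j\neq 2$, the indices $i$ with $\zeta^{N_i}=1$ lie along a connected subtree containing $E_j$, the additivity relations make $\sum_{i:\zeta^{N_i}=1}\chi(E_i^\circ\cap h^{-1}(a))$ collapse to a manifestly nonzero number, and the residue of $Z_{\top}$ at $s_0$ is governed by the same sum. Assembling (i)--(ii) over the finitely many candidate values, and passing from the local statements at the points $h(E_j)$ to the global one, finishes $n=2$.

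Why none of this survives for $n\geq 3$: there is no minimal embedded resolution, resolutions need centres of dimension up to $n-2$, the incidence complex of the $E_j$ is no longer a tree, the numerical relations degrade to inequalities, the strata $E_I^\circ$ may be arbitrary varieties with arbitrary Euler characteristics, and A'Campo's formula then records $\chi(E_j^\circ)$ rather than a point count on a $\P^1$. Every ingredient of the two-dimensional rigidity breaks, which is precisely why in higher dimension only partial results --- for special classes of $f$ where enough rigidity is recovered --- are known, as discussed in the sequel.
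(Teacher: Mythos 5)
This statement is labeled as a conjecture and remains open for $n\geq 3$, so the paper does not (and cannot) give a proof; you correctly note this and only sketch the known case $n=2$, which the paper proves in \S\ref{sec:3}. Your sketch is in the same spirit as the paper's proof --- reduce to an exceptional $E_j$ with $s_0 = -\nu_j/N_j$, invoke A'Campo's formula, and show that $\zeta = e^{2\pi i s_0}$ is not cancelled in the local monodromy zeta function --- and your use of the relations and inequalities between numerical data is the right engine. But two of your intermediate claims don't hold up as stated and would leave real gaps.

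First, in step (ii) you assert that ``the indices $i$ with $\zeta^{N_i}=1$ lie along a connected subtree containing $E_j$.'' This is false in general: writing $s_0=-c/d$ in lowest terms, the set $\{E_i : d\mid N_i\}$ can split into several connected components of the dual graph. The paper's argument explicitly decomposes $\cup_{d\mid N_j}E_j$ into connected components $\mathcal{C}_\ell$ and proves, via the tree structure and Lemma \ref{relationsbetweennumericaldata}(1), that each component satisfies $\sum_{E_j\subset\mathcal{C}_\ell}\chi(E_j^\circ) = 2 - t_\ell \le 0$ (where $t_\ell$ is the number of boundary attachment points); it then isolates one particular $\mathcal{C}_\ell$ with strict inequality. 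Your single-subtree picture skips exactly this bookkeeping.

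Second, you say the additivity relations make the sum ``collapse to a manifestly nonzero number,'' but give no mechanism. The paper's proof is finer here: it splits into the case where $s_0$ is a pole of order $1$ versus order $2$. In the order-$1$ case, Proposition \ref{non contribution} forces some $E_0$ with $s_0=-\nu_0/N_0$ to meet $r\ge 3$ other components, and the equivalence $d\mid N_i \Leftrightarrow \nu_0/N_0=\nu_i/N_i$ (derived from Lemma \ref{inequalitiesbetweennumericaldata}) then shows all neighbours of $E_0$ have $d\nmid N_i$, so $E_0$ \emph{alone} is a connected component $\mathcal{C}_\ell$ with $\chi(E_0^\circ)=2-r<0$. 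In the order-$2$ case, $\mathcal{C}_\ell$ is shown to be a chain whose two endpoints each meet at least two components with $d\nmid N_k$, giving $\sum \le -2$. Neither of these structural facts is ``manifest''; each rests on the inequality $-1\le\alpha_i<1$ from Lemma \ref{inequalitiesbetweennumericaldata}, which is specific to the \emph{minimal} resolution and is the non-trivial input you would need to reprove (or cite). Your reliance on the additivity $N_i = N_{i'}+N_{i''}$ is equivalent in spirit, but it does not by itself produce the dichotomy the paper exploits.

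Minor point: your statement (i) --- that a candidate from a curve with $k_j=2$ can only be a genuine pole if it coincides with the candidate of a curve with $k_j\ne 2$ --- is correct as a consequence of Proposition \ref{non contribution}, since if every exceptional curve carrying a given ratio $\nu_j/N_j$ meets at most two others, each contributes residue zero. But it is worth stating this derivation explicitly; the way you phrase it reads as though it were the determination-of-poles theorem (Theorem \ref{determination poles}), which is a stronger downstream result rather than an input to the proof.
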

The stronger versions say that $s_0$ is then a root of $b_f$ or $b_{f,0}$, respectively.

\subsection{The motivic zeta function}\label{motivic}

Kontsevich suggested the idea of motivic integration\index{motivic integration} (a geometric analogue of $p$-adic integration) in a lecture at Orsay in 1995. This theory has been further developed by Denef and Loeser, amongst others; we refer to the wonderful book \cite{CNS} and the references therein.
The motivic zeta function is defined as an analogue of the Igusa zeta function, using motivic integration instead of $p$-adic integration.

	\begin{definition}
		Let $k$ be any field. The {\em Grothendieck group\index{Grothendieck group} of $k$-varieties} $K_0(\mathrm{Var}_k)$ is the quotient of the free abelian group generated by the symbols $[X]$, where $X$ runs over all varieties over $k$, by the relations
		\begin{align*}
			[X] = [Y] &\mbox{ if } X\cong Y,\\
			[X\setminus Y]+[Y]=[X] &\mbox{ if } Y \mbox{ is a closed subvariety of }X.
		\end{align*}
Multiplication determined by $[X]\cdot [Y]:=[X\times_k Y]$
		turns $K_0(\mathrm{Var}_k)$ into a commutative ring, called the {\em Grothendieck ring\index{Grothendieck ring} of $k$-varieties}.
We denote by $\mathbb{L}$ the class of the affine line $\A_k^1$ and by $K_0(\mathrm{Var}_k)_\mathbb{L}$ the localization of $K_0(\mathrm{Var}_k)$ with respect to $\mathbb{L}$.
	\end{definition}

The motivic zeta function \cite{DenefLoeser2}  is defined as an integral over the \lq arc space\index{arc space}\rq\ $k \llbracket t \rrbracket^n$, which is the analogue of the space of $p$-adic integers $\Z_p^n$. It is a power series in the formal variable $T$, which is sometimes written as $\mathbb{L}^{-s}$ (with a formal $s$), to stress the analogy with $p^{-s}$.
That arc space carries a natural {\em motivic measure\index{motivic measure} $d\mu$}, with values in $K_0(\mathrm{Var}_k)_\mathbb{L}$, which is the analogue of the Haar measure on $\Z_p^n$. For a detailed explanation, we refer e.g. to the original papers \cite{DenefLoeser2} and \cite{DenefLoeser3}, to the introductory survey \cite{Veys Sapporo} or to  \cite{CNS}.

	\begin{definition}
Let $k$ be a field of characteristic zero and $f\in k[x_1,\dots,x_n]\setminus k$. The  {\em motivic zeta function\index{motivic zeta function} of $f$} is a formal power series over $K_0(\mathrm{Var}_k)_\mathbb{L}$ in the variable $T=\mathbb{L}^{-s}$, given by the (converging) motivic integral
		\[Z_{\mot}(f;s)= Z_{\mot}(f;T):=\int_{k\llbracket t \rrbracket^n}(\mathbb{L}^{-s})^{\mathrm{ord}_tf(x)}d\mu.\]
	\end{definition}

\noindent	
More concretely, define $$\mathcal{X}_i=\left\{\gamma\in \left(\frac{k\llbracket t \rrbracket}{(t^{i+1})}\right)^n\mid \mathrm{ord}_t f_i(\gamma)=i\right\}$$
for $i\in \Z_{\geq 0}$, where $f_i:\left(\frac{k\llbracket t\rrbracket}{(t^{i+1})}\right)^n\rightarrow \frac{k\llbracket t\rrbracket}{(t^{i+1})}$ is the natural extension of $f:k^n \rightarrow k$, and for $\lambda \in \frac{k\llbracket t\rrbracket}{(t^{i+1})}$ we denote by $\mathrm{ord}_t(\lambda)\in \{0,1,\dots,i,+\infty\}$ the highest power of $t$ that divides $\lambda$. Then we have
	\[
	Z_{\mot}(f;s)=\frac{1}{\mathbb{L}^n}\sum_{i\geq 0}[\mathcal{X}_i]\mathbb{L}^{-in-is}.
	\]
In order to see also the analogy of this expression with the Igusa zeta function, one easily verifies that, for $f\in \Q_p[x_1,\dots,x_n]\setminus \Q_p$, we have
\[
	Z(f;s)=\frac 1{p^n}\sum_{i\geq 0} \sharp\left\{ a\in \left(\frac{\Z_p}{(p^{i+1})}\right)^n \mid \mathrm{ord}_p f(a) = i  \right\} p^{-in-is}.
	\]

Denef's formula\index{Denef's formula} can be upgraded to a similar formula for $Z_{\mot}(f;s) $ in terms of an embedded resolution of $\{f=0\}$  \cite{DenefLoeser2}.
The various versions of the monodromy conjecture\index{monodromy conjecture} can also be formulated for the motivic zeta function. In this setting there is a subtlety in the notion of pole because $K_0(\mathrm{Var}_k)$ is not a domain \cite{Po} (even $\mathbb{L}$ is a zero divisor \cite{Borisov}), see \cite{RV2}.
For precise statements we refer to  \cite{Nicaise}\cite{DenefLoeser2}.  

\medskip
The motivic zeta function specializes to the $p$-adic one for all but finitely many $p$ and to the topological one \cite{DenefLoeser2}.  Hence the monodromy conjecture in the motivic setting implies it in the two other settings, but not necessarily in the other direction.
(There exist explicit examples, like $f=x_1^3+x_2^3+x_3^3+x_4^3+x_5^6$, where a pole of the motivic zeta function vanishes for the topological zeta function.)

However,  crucial arguments in proofs are often of geometric nature and typically \lq universal\rq. In that case proofs for the topological zeta function can immediately be upgraded to proofs for the motivic zeta function. In particular, this holds for the arguments in \S\ref{sec:3} and \S\ref{sec:surfaces}.  Hence we focus in the sequel, for sake of exposition,  on the topological zeta function: formulas are shorter and that way we can concentrate on the crux of the arguments.

\subsection{Monodromy and b-function}

We now also describe the \lq other side\rq\ of the monodromy conjecture, monodromy eigenvalues and roots of the b-function, in terms of an embedded resolution.

\begin{definition}
Let $f\in \C[x_1,\dots,x_n]\setminus \C$ and fix a point $a\in \{f=0\}$. Let $F_a$ denote the Milnor fibre of $f$ at $a$, and $P_i(t)$ the characteristic polynomial of the monodromy $M_i$ acting on $H^i(F_a,\C)$ for $i=0,\dots,n-1$. The {\em monodromy zeta function\index{monodromy zeta function}} of $f$ at $a$ is the alternating product
$$
\zeta_a(t):= \prod_{i=0}^{n-1} P_i(t)^{(-1)^{i+1}} = \frac {P_1(t)\cdot P_3(t)\cdot \dots}{P_0(t)\cdot P_2(t)\cdot \dots}.
$$
\end{definition}
(We note that some authors use a different convention.)

\begin{theorem}[A'Campo's formula \cite{A'C}]\label{A'Campo}\index{A'Campo's formula}
Let $f\in \C[x_1,\dots,x_n]\setminus \C$ and choose an embedded resolution $h:Y\rightarrow \A_\C^n$ of $\{f=0\}$, for which we use notation from Note \ref{notation embedded resolution}.

(1)  For any point $a\in \{f=0\}$, we have that
$$
\zeta_a(t)= \prod_{j\in S}(t^{N_j}-1)^{-\chi\big(E_j^{\circ} \cap h^{-1}\{a\}\big)}.
$$

(2) When $a$ is an isolated singularity of $\{f=0\}$, the formula simplifies considerably, yielding an expression for $P_{n-1}(t)$.  More precisely,
\begin{eqnarray*}
P_{n-1}(t) &=& (t-1)\zeta_a(t) = (t-1) \prod_{j\in S_e}(t^{N_j}-1)^{-\chi(E_j^{\circ})}  \qquad (n\text{ even}),  \\
P_{n-1}(t) &=& \frac{1}{(t-1)\zeta_a(t)} = \frac1{t-1} \prod_{j\in S_e}(t^{N_j}-1)^{\chi(E_j^{\circ})}  \qquad (n\text{ odd}).
\end{eqnarray*}
\end{theorem}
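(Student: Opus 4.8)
The plan is to prove (1) by A'Campo's original strategy---reducing the asserted identity of rational functions to a formula for the Lefschetz numbers of all iterates of the monodromy, which is then proved geometrically on the resolution $h$---and to deduce (2) formally from (1) and Proposition~\ref{monodromy properties}. Throughout write $\chi_j:=\chi\big(E_j^\circ\cap h^{-1}\{a\}\big)$.

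\emph{Step 1: reduction to Lefschetz numbers.} By Proposition~\ref{monodromy properties}(1) all eigenvalues of all $M_i$ are roots of unity, so I would factor $P_i(t)=\prod_\lambda(t-\lambda)^{m_{i,\lambda}}$ and set $\mu_\lambda:=\sum_i(-1)^im_{i,\lambda}$; as each $M_i$ acts on integral cohomology, its characteristic polynomial has rational coefficients, so $m_{i,\lambda}$---hence $\mu_\lambda$---depends only on the order $d$ of $\lambda$, say $\mu_\lambda=:\mu^{(d)}$. Grouping eigenvalues by order gives $\zeta_a(t)=\prod_\lambda(t-\lambda)^{-\mu_\lambda}=\prod_d\Phi_d(t)^{-\mu^{(d)}}$, with $\Phi_d$ the $d$-th cyclotomic polynomial; and factoring $t^{N_j}-1=\prod_{d\mid N_j}\Phi_d(t)$ rewrites the asserted formula as $\zeta_a(t)=\prod_d\Phi_d(t)^{-\nu^{(d)}}$, where $\nu^{(d)}:=\sum_{j\in S,\ d\mid N_j}\chi_j$. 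So it suffices to prove $\mu^{(d)}=\nu^{(d)}$ for all $d$. Introducing the Lefschetz numbers $\Lambda_k:=\sum_i(-1)^i\operatorname{tr}(M_i^k)=\sum_\lambda\mu_\lambda\lambda^k=\sum_d\mu^{(d)}c_d(k)$ with $c_d(k):=\sum_{\operatorname{ord}\lambda=d}\lambda^k$ the Ramanujan sum, a short computation gives $\sum_d\nu^{(d)}c_d(k)=\sum_{j\in S,\ N_j\mid k}N_j\chi_j$. Since the $c_d$ are linearly independent as functions on $\Z_{\geq 1}$, the desired equalities $\mu^{(d)}=\nu^{(d)}$ are equivalent to \emph{A'Campo's Lefschetz formula}
$$
\Lambda_k=\sum_{\substack{j\in S\\ N_j\mid k}}N_j\,\chi_j\qquad(k\geq 1),
$$
which becomes the actual goal.

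\emph{Step 2: a geometric model for the Milnor fibre.} Fix a closed Milnor ball $B\ni a$ and $t\neq 0$ close to $0$. As $h$ restricts to an isomorphism over $B\setminus\{f=0\}$, it identifies the (compact, semilocal) Milnor fibre $F_a=f^{-1}(t)\cap B$ with $\widetilde F:=(f\circ h)^{-1}(t)\cap h^{-1}(B)$, compatibly with the monodromy. Since $h^{-1}(B)$ is a neighbourhood of $h^{-1}\{a\}\subset\bigcup_jE_j$ and $f\circ h$ has simple normal crossings with $\operatorname{div}(f\circ h)=\sum_jN_jE_j$, one gets a finite partition of $\widetilde F$ into $\widetilde h$-invariant locally closed pieces $\widetilde F=\bigsqcup_{\emptyset\neq J\subset S}\widetilde F_J$, with $\widetilde F_J$ a fibre bundle over $E_J^\circ\cap h^{-1}(B)$ whose fibre is the local Milnor fibre $\Phi_J=\{\prod_{j\in J}y_j^{N_j}=t\}$ of the monomial in the $|J|$ coordinates normal to $E_J^\circ$. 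One computes $\chi_c(\Phi_{\{j\}})=N_j$ ($N_j$ points) and $\chi_c(\Phi_J)=0$ for $|J|\geq 2$ (projecting off one coordinate realises $\Phi_J$ as a finite cover of $(\C^*)^{|J|-1}$); multiplicativity of $\chi_c$ in fibrations together with the retraction of $h^{-1}(B)$ onto $h^{-1}\{a\}$ then gives $\chi_c(\widetilde F_{\{j\}})=N_j\chi_j$ and $\chi_c(\widetilde F_J)=0$ for $|J|\geq 2$.

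\emph{Step 3: the Lefschetz computation.} Following A'Campo, one constructs a geometric monodromy $\widetilde h$ of $\widetilde F$ preserving the partition of Step 2, covering the identity on each base $E_J^\circ$, permuting the $N_j$ points of each fibre $\Phi_{\{j\}}$ cyclically, and acting on each fibre $\Phi_J$ with $|J|\geq 2$ by a rotation in a ``toric'' direction (after the standard modification making $\widetilde h$ the identity near $\partial\widetilde F$). Then $\widetilde h^k$ equals the identity on $\widetilde F_{\{j\}}$ when $N_j\mid k$ and is fixed-point-free on it otherwise; and on each $\widetilde F_J$ with $|J|\geq 2$ it is either fixed-point-free or equal to the identity on a piece of Euler characteristic $0$. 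By additivity of the compactly-supported Lefschetz number over the $\widetilde h$-invariant partition and the Lefschetz fixed point theorem on each piece,
$$
\Lambda_k=\Lambda(\widetilde h^k)=\sum_{\emptyset\neq J\subset S}\Lambda_c\big(\widetilde h^k;\widetilde F_J\big)=\sum_{j\in S,\ N_j\mid k}\chi_c\big(\widetilde F_{\{j\}}\big)=\sum_{j\in S,\ N_j\mid k}N_j\,\chi_j,
$$
which is A'Campo's Lefschetz formula; with Step 1 this proves (1).

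\emph{Step 4: the isolated case, and the main obstacle.} For (2), choose $h$ to be an isomorphism outside $h^{-1}\{a\}$ (as allowed by Note~\ref{notation embedded resolution}), so that $E_j^\circ\subset h^{-1}\{a\}$, hence $\chi_j=\chi(E_j^\circ)$, for $j\in S_e$. By Proposition~\ref{monodromy properties}(4), $H^i(F_a,\C)=0$ for $i\neq 0,n-1$ and $P_0(t)=t-1$, so $\zeta_a(t)=(t-1)^{-1}P_{n-1}(t)^{(-1)^n}$; solving for $P_{n-1}(t)$ and inserting (1) yields the two displayed expressions, after observing that the strict-transform components $j\in S_s$ contribute trivially (a standard fact for isolated singularities: $\chi_j=0$ for such $j$, as $\{f=0\}\setminus\{a\}$ is smooth). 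The only genuinely non-formal point---and the anticipated main obstacle---is Step 3: producing a geometric monodromy with the prescribed local normal form along \emph{every} stratum $E_J^\circ$ and carefully running the Lefschetz fixed point formula over the (a priori non-compact) pieces of the model $\widetilde F$. The remaining ingredients---the cyclotomic and power-series bookkeeping of Step 1, additivity of $\chi_c$, and the elementary computation of $\chi_c(\Phi_J)$---are routine.
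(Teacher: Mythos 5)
The paper itself does not include a proof of A'Campo's formula: Theorem~\ref{A'Campo} is stated with the citation \cite{A'C} and no argument is supplied, so there is no ``paper's own proof'' to compare against. Your outline is a faithful and essentially correct reconstruction of A'Campo's original proof, and the formal bookkeeping is right: the sign conventions in $\zeta_a(t)=\prod_\lambda(t-\lambda)^{-\mu_\lambda}$, the grouping of eigenvalues by order via rationality of the $P_i$, the identity $\sum_{d\mid N_j}c_d(k)=N_j$ if $N_j\mid k$ and $0$ otherwise, the Euler--characteristic computations $\chi_c(\Phi_{\{j\}})=N_j$ and $\chi_c(\Phi_J)=0$ for $|J|\ge 2$, and the derivation of (2) from (1) using $P_0(t)=t-1$, the vanishing of $H^i(F_a)$ for $0<i<n-1$, and $E_j^\circ\cap h^{-1}\{a\}=\emptyset$ for $j\in S_s$ when $h$ is an isomorphism off $h^{-1}\{a\}$. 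You correctly identify that the genuinely non-formal step is the construction of a geometric monodromy $\widetilde h$ that respects the stratification of the resolved Milnor fibre and has the prescribed local normal forms, together with the (stratified, compactly-supported) Lefschetz fixed-point argument on each non-compact piece; that is indeed where the real work lies in \cite{A'C}, and your summary of it matches A'Campo's strategy. Two minor points worth tightening if you were to write this out in full: (i) the claim $\chi_c(E_J^\circ\cap h^{-1}(B))=\chi(E_J^\circ\cap h^{-1}\{a\})$ deserves a word, since $E_J^\circ\cap h^{-1}(B)$ is a real semialgebraic (not complex algebraic) open set and one should exhibit the retraction compatibly with the stratification; and (ii) one should check that the fibre bundle structure of $\widetilde F_J$ over $E_J^\circ\cap h^{-1}(B)$ --- obtained from a choice of tubular/plumbing data --- really does make $\chi_c$ multiplicative, which in A'Campo's treatment is handled by an explicit cell decomposition rather than a fibration argument.
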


For non-isolated singularities, $\zeta_a(t)$ can miss some eigenvalues, due to cancellation in numerator and denominator. However, these are then detected by monodromy zeta functions at nearby points \cite[Lemma 4.6]{Denef3}.
\begin{proposition}
Let $f\in \C[x_1,\dots,x_n]\setminus \C$ and fix a point $a\in \{f=0\}$.  The set of all monodromy eigenvalues of $f$ at all points $b$ in a small enough neighbourhood of $a$ is equal to the set of all zeroes and poles of  $\zeta_b(t)$ for all points $b$ in a small enough neighbourhood of $a$.
\end{proposition}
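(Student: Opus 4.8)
Fix a small open ball $U$ around $a$ and, for $b\in\{f=0\}\cap U$, write $\mathrm{Ev}(b)$ for the set of monodromy eigenvalues of $f$ at $b$ and $\mathrm{ZP}(b)$ for the set of zeroes and poles of $\zeta_b(t)$; the claim is that $\bigcup_b\mathrm{Ev}(b)=\bigcup_b\mathrm{ZP}(b)$, the unions taken over $b\in\{f=0\}\cap U$. The inclusion $\supseteq$ I would dispose of first, and it holds already pointwise: since $\zeta_b(t)=\prod_{i=0}^{n-1}P_i(t)^{(-1)^{i+1}}$, a zero $t_0$ of $\zeta_b(t)$ is a root of $P_i$ for some odd $i$ (its multiplicity in $\prod_{i\text{ odd}}P_i$ strictly exceeds that in $\prod_{i\text{ even}}P_i$), and a pole of $\zeta_b(t)$ is a root of $P_i$ for some even $i$; either way $t_0$ is an eigenvalue of $M_i$ on $H^i(F_b,\C)$, so $\mathrm{ZP}(b)\subseteq\mathrm{Ev}(b)$.

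\textbf{Reduction of the hard inclusion.}
Fix an embedded resolution $h\colon Y\to\A^n_\C$ of $\{f=0\}$, with notation as in Note~\ref{notation embedded resolution}. Both unions are finite: monodromy eigenvalues are roots of unity by Proposition~\ref{monodromy properties}(1), and by A'Campo's formula (Theorem~\ref{A'Campo}(1)) every element of $\mathrm{ZP}(b)$ is an $N_j$-th root of unity for some $j\in S$. Moreover, since $h$ is proper and algebraic, the Euler characteristics $b\mapsto\chi(E_I^\circ\cap h^{-1}\{b\})$ are constructible functions of $b$, so $\{f=0\}\cap U$ decomposes into finitely many locally closed strata on each of which $\zeta_b(t)$ is constant (by A'Campo) and on each of which $\mathrm{Ev}(b)$ is constant (by the stratified local triviality of the Milnor fibration); hence each side is computed from one generic point per stratum. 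It therefore suffices to show that every $\lambda\in\mathrm{Ev}(b_0)$, for $b_0$ near $a$, occurs as an \emph{uncancelled} zero or pole of $\zeta_{b'}(t)$ for some $b'$ near $a$ — equivalently, that an eigenvalue which is killed by the alternating-product structure of $\zeta_{b_0}(t)$ resurfaces uncancelled at a nearby point.

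\textbf{The crux, and the main obstacle.}
This last statement I would prove by induction on $d:=\dim_a\big(\mathrm{Sing}\{f=0\}\big)$, following \cite[Lemma~4.6]{Denef3}. In the base case $d\le 0$ (isolated singularity, with smooth points of $\{f=0\}$ arbitrarily near) one invokes A'Campo's formula in the simplified form Theorem~\ref{A'Campo}(2), which recovers $P_{n-1}(t)$ up to the ambiguity of the factor $t-1$ coming from $P_0$, together with a direct inspection at nearby smooth points $b$, where $\zeta_b(t)=(t^{d_b}-1)^{-1}$ by Proposition~\ref{monodromy properties}(3),(5) already exhibits the remaining eigenvalue(s), in particular $1$. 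For $d\ge 1$ I would move to a generic point $b_1$ of the top-dimensional part of $\mathrm{Sing}\{f=0\}$ near $a$, cut by a generic affine subspace $H\ni b_1$ of dimension $n-d$, and apply the inductive hypothesis to $f|_H$, whose singular locus at $b_1$ has dimension $<d$ and which still carries the eigenvalue $\lambda$; comparing A'Campo's formula for $f$ with that for $f|_H$ — the numerical data $(N_j,\nu_j)$ and the incidence pattern of a resolution adapted to $H$ being inherited from $h$ — should transport an uncancelled witness for $f|_H$ near $b_1$ back to one for $f$ at a point $b'$ near $b_1$, hence near $a$. Everything preceding the induction is formal; the real work, and the main obstacle, is exactly this inductive step: one must check that a \emph{generic} transversal slice both preserves the given eigenvalue and introduces no new cancellation in the alternating product defining $\zeta$, and must relate the resolution data of $f$ and of $f|_H$ precisely enough to carry the conclusion across — the careful bookkeeping carried out in \cite[Lemma~4.6]{Denef3}.
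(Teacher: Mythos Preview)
The paper does not prove this proposition; it merely cites \cite[Lemma~4.6]{Denef3}. Your easy inclusion and the constructibility/stratification set-up are correct, but the inductive step as you describe it has a genuine gap. You ``move to a generic point $b_1$ of the top-dimensional part of $\mathrm{Sing}\{f=0\}$'' and assert that the transversal slice $f|_H$ ``still carries the eigenvalue $\lambda$''. That is not justified: $\lambda$ was an eigenvalue at $b_0$, which may be $a$ itself in a $0$-dimensional stratum, and there is no mechanism in your outline transporting $\lambda$ from $a$ to a generic point of the \emph{top} stratum of the singular locus. Indeed the set of eigenvalues at $a$ can strictly contain the set at such a $b_1$. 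Your slicing argument therefore treats only those eigenvalues already visible on the top stratum of $\mathrm{Sing}\{f=0\}$; the eigenvalues concentrated at $a$ (or more generally on lower strata lying in the closure of the top one) are left untouched, and the induction on $d=\dim_a\mathrm{Sing}\{f=0\}$ does not close.

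The repair is to move within the right set. Replace $\mathrm{Sing}\{f=0\}$ by $Z_\lambda:=\{b:\lambda\in\mathrm{Ev}(b)\}$, the support of the $\lambda$-generalized eigenspace $(\psi_f\C)_\lambda$ of the nearby cycles complex, which is closed and contains $a$ by hypothesis. Choose $b_1$ on a maximal-dimensional stratum of $Z_\lambda$ inside a small neighbourhood of $a$; now $\lambda\in\mathrm{Ev}(b_1)$ by construction. Because $(\psi_f\C)_\lambda[n-1]$ is perverse and that stratum is open in its support, the stalk at $b_1$ is concentrated in a single cohomological degree, hence has nonzero Euler characteristic, and $\lambda\in\mathrm{ZP}(b_1)$ outright. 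In your slicing language: along an \emph{open} stratum of $Z_\lambda$ the transversal Milnor fibre sees $\lambda$ in only one degree, so no cancellation can occur in $\zeta_{b_1}$.
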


\begin{example}(continuing Example \ref{example})
For $f=y^3-x^5$ we have by Theorem \ref {A'Campo} that
$$\zeta_0(t) = \frac{t^{15}-1}{(t^3-1)(t^5-1)}, \quad P_1(t)=  \frac{(t-1)(t^{15}-1)}{(t^3-1)(t^5-1)} \quad \text{ and }\quad P_0(t) = t-1.  $$
Hence the monodromy eigenvalues of $f$ at $0$ are $1$ and all primitive $15$th roots of unity.
\end{example}

\begin{remark}\label{expectation}
Looking at Theorem \ref{A'Campo},  we expect \lq generically\rq\ that, if $\chi(E_j^{\circ})=0$, then $-\frac{\nu_j}{N_j}$ is {\em not} a pole of $Z_{\top}(f;s)$.
More generally, we can expect this as soon as $\chi(E_j^{\circ}) \geq 0$ for even $n$, and as soon as $\chi(E_j^{\circ}) \leq 0$ for odd $n$. In the next sections, this idea will be an important part of a proof for curves and a strategy for surfaces.
\end{remark}

For roots of the b-function, there is no exact formula (except in some special cases), but we do have a complete list of possible candidate roots.

\begin{theorem}[\cite{Li} \cite{Ko}]\label{roots b}\index{b-function}\index{roots of b-function}
Let $K$ be a field of characteristic $0$ and $f\in K[x_1,\dots,x_n] \setminus  K$. Let $h:Y\rightarrow \A_K^n$ be an embedded resolution of $\{f=0\}$, for which we use notation from Note \ref{notation embedded resolution}.  Then the roots of $b_f$ are of the form
$ -\frac{\nu_j+k}{N_j}$  with $ j\in S$  and $k\in \Z_{\geq 0}$.
\end{theorem}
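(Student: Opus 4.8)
The plan is to pull everything back along the embedded resolution $h$, where $f\circ h$ is monomial and its (twisted) b-function can be written down explicitly, and then to descend from $Y$ to $\A^n_K$. Two harmless reductions first: the Bernstein--Sato polynomial and the numerical data are unchanged under extension of the characteristic-zero ground field, so we may assume $K=\C$; and since $b_f=\lcm_{a}b_{f,a}$ and every $E_j$ meets $h^{-1}\{f=0\}$, it suffices to show that each root of each \emph{local} b-function $b_{f,a}$ is of the form $-\frac{\nu_j+k}{N_j}$ with $j\in S$, $a\in h(E_j)$ and $k\in\Z_{\ge 0}$.

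For the monomial model, fix a chart of $Y$ around a point $P\in h^{-1}\{a\}$; by (\ref{numerical data in local coordinates}) we have $f\circ h=u\prod_{i\in S_P}y_i^{N_i}$ and $\Jac_h=v\prod_{i\in S_P}y_i^{\nu_i-1}$ with $u,v$ units. Applying $\prod_{i\in S_P}\partial_{y_i}^{N_i}$ to $(f\circ h)^{s+1}\Jac_h$ and using $\partial_y^{\,l}(y^{c})=c(c-1)\cdots(c-l+1)\,y^{c-l}$, the unit factors being absorbed by the standard Leibniz-rule argument, one obtains a functional equation $Q(s)\cdot (f\circ h)^{s+1}\Jac_h=\beta_P(s)\cdot(f\circ h)^{s}\Jac_h$ in which $\beta_P(s)=\prod_{i\in S_P}\prod_{l=0}^{N_i-1}\bigl(N_i(s+1)+\nu_i-1-l\bigr)$, a polynomial whose roots are exactly the numbers $-\frac{\nu_i+l}{N_i}$ with $i\in S_P$ and $0\le l\le N_i-1$. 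Note that the roots of the shifted polynomials $\beta_P(s+k)$, $k\in\Z_{\ge 0}$, are precisely the $-\frac{\nu_i+m}{N_i}$ with $m\in\Z_{\ge0}$.

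The descent is the crux, and the main obstacle. What is needed is the compatibility of the Bernstein--Sato construction with the proper birational direct image along $h$, twisted by the relative canonical divisor $K_h=\sum_j(\nu_j-1)E_j$: precisely, that every root of $b_{f,a}$ is a root of $\beta_P(s+k)$ for some $P\in h^{-1}\{a\}$ and some $k\in\Z_{\ge0}$. This rests on holonomicity of the $\mathcal{D}$-modules generated by $f^s$ and $(f\circ h)^s$ and on the behaviour of the $V$-filtration (equivalently the Gauss--Manin connection) under $h$; it is the technical heart of \cite{Li} and \cite{Ko}. Granting it and combining with the previous paragraph, every root of $b_{f,a}$ equals $-\frac{\nu_i+m}{N_i}$ for some $i\in S_P$ --- hence with $a\in h(E_i)$ --- and some $m\ge0$; taking $\lcm$ over $a$ gives the theorem.

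It is worth isolating what makes the descent hard: a relation expressing $b_f$ in terms of the b-functions upstairs is not difficult to obtain, but pinning the integer shift to be \emph{nonnegative} ($k\ge0$, not merely $k\in\Z$) is the delicate point. For $K\subseteq\C$ one might hope to sidestep this by realising roots of $b_{f,a}$ as poles of the archimedean zeta function $\int_{\C^n}|f|^{2s}\varphi$ and invoking Theorem \ref{poles archimedean}; but such an argument controls roots only modulo $\Z$, so the $\mathcal{D}$-module analysis seems unavoidable.
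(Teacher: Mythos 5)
The paper states Theorem~\ref{roots b} by citing \cite{Li} and \cite{Ko} without reproducing a proof, so there is no in-text argument to compare against. Judged on its own merits, your proposal gets the reductions and the upstairs computation right: the Bernstein--Sato polynomial and the numerical data are insensitive to extension of the characteristic-zero ground field, $b_f=\lcm_a b_{f,a}$ localizes the problem, and on a chart of $Y$ one has $(f\circ h)^{s+1}\Jac_h=u^{s+1}v\prod_i y_i^{N_i(s+1)+\nu_i-1}$, so applying $\prod_i\partial_{y_i}^{N_i}$ gives $\beta_P(s)=\prod_{i\in S_P}\prod_{l=0}^{N_i-1}\bigl(N_i(s+1)+\nu_i-1-l\bigr)$, whose roots are exactly $-\frac{\nu_i+l}{N_i}$ for $0\le l\le N_i-1$. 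Your caveat that the archimedean route through Theorems~\ref{poles archimedean} and~\ref{poles archimedean2} would only constrain roots of $b_f$ modulo $\Z$ is also correct and is indeed why something finer is needed.

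However, the proposal is not a proof. The ``descent'' --- that every root of $b_{f,a}$ is a root of some $\beta_P(s+k)$ with $P\in h^{-1}(a)$ and $k\in\Z_{\ge0}$ --- \emph{is} the theorem, and you treat it as a black box, gesturing at holonomicity and the behaviour of the $V$-filtration under $h$. This compatibility of b-functions with proper direct image along $h$, twisted by the relative canonical divisor, is precisely what Lichtin and Koll\'ar establish, and it occupies essentially all of their arguments; the monomial computation preceding it and the one-line conclusion following it are the easy parts. You have correctly isolated \emph{where} the difficulty lies (getting the shift $k$ to be nonnegative rather than merely an integer), which is a genuine insight, but naming the hard lemma and citing the papers that prove it is not the same as proving it. To close the gap you would have to actually carry out the $\mathcal{D}$-module direct-image argument, or, in Lichtin's two-variable case, the asymptotic-expansion argument via fibre integrals.
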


Of course, only finitely many \lq shifts\rq\ by $k$ are possible. Moreover, all roots of $b_f$ are contained in the interval $(-n,0)$, see e.g. \cite{Sa}.

\begin{example}(continuing Example\ref{example})
For $f=y^3-x^5$ we have that
$$b_f = \Big(s+\frac{22}{15}\Big)\Big(s+\frac{19}{15}\Big)\Big(s+\frac{17}{15}\Big)\Big(s+\frac{16}{15}\Big)\Big(s+1\Big)\Big(s+\frac{14}{15}\Big)\Big(s+\frac{13}{15}\Big)\Big(s+\frac{11}{15}\Big)\Big(s+\frac{8}{15}\Big).$$
This follows from a formula for isolated quasihomogeneous singularities; see e.g. \cite[\S11]{Yano}.
\end{example}

\begin{remark} Both in Example \ref{first example} and in the example above there is a bijection between the roots of the b-function and the monodromy eigenvalues; this is not true in general. Take for example $f = (y^2-x^3)^5-x^{18}$, then $-7/30$ and  $-37/30$ are roots of $b_f$, both inducing the eigenvalue $e^{2\pi i (-7/30)}$.
\end{remark}

The largest root of $b_f$ is related to an important singularity invariant, namely the log canonical threshold\index{log canonical threshold} of $f$.

\begin{definition}
Let $f\in \C[x_1,\dots,x_n]\setminus \C$ and $a\in \{f=0\}$. Let $h:Y\rightarrow \A_\C^n$ be an embedded resolution of singularities of $\{f=0\}$, for which we use notation from Note \ref{notation embedded resolution}.
The (global) {\em log canonical threshold of $f$} and the (local) {\em log canonical threshold of $f$ at $a$} are
\begin{equation}\label{lct}
\lct(f) := \min_{j\in S} \frac{\nu_j}{N_j} \qquad\text{and}\qquad \lct_a(f) := \min_{j\in S, a\in f(E_j)} \frac{\nu_j}{N_j},
\end{equation}
respectively.
\end{definition}

One can show that these invariants are independent of the chosen embedded resolution. Alternatively, there are several intrinsic definitions for the log canonical threshold, and (\ref{lct}) can be considered as formulas derived from one of these definitions. We refer to e.g. the survey paper  \cite{Mu1} for details. Yet another argument for independence is the announced relation with the b-function.

\begin{theorem}[\cite{Ka}]
Let $f\in \C[x_1,\dots,x_n]\setminus \C$.

(1)  Then $\lct(f)$ is the absolute value of the largest root of $b_f$.

(2) Take $a\in \{f=0\}$. Then $\lct_a(f)$ is the absolute value of the largest root of $b_{f,a}$.
\end{theorem}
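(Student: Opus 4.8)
The plan is to prove the local statement (2) first and then deduce the global one (1): every component $E_j$ satisfies $f(E_j)\neq\emptyset$, so (\ref{lct}) gives $\lct(f)=\min_{a\in\{f=0\}}\lct_a(f)$, attained at some point $a_0$, while $b_f=\lcm_a b_{f,a}$. So fix $a\in\{f=0\}$ and set $c:=\lct_a(f)$, which by (\ref{lct}) equals $\min\{\nu_j/N_j: a\in f(E_j)\}$. The easy half is that every root of $b_{f,a}$ is $\le-c$: by the local analogue of Theorem \ref{roots b} such a root has the shape $-(\nu_j+k)/N_j$ with $a\in f(E_j)$ and $k\in\Z_{\ge0}$, hence is $\le-\nu_j/N_j\le-c$. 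Thus the largest root of $b_{f,a}$ has absolute value $\ge c$, and it remains to exhibit a root equal to $-c$.

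For the reverse half I would argue analytically, through the complex archimedean zeta function. Choose a nonnegative test function $\varphi$ on $\C^n$ that is strictly positive on the interior of a small enough ball $B$ around $a$ and supported in $B$, and put $Z(s):=\int_{\C^n}|f(x)|^{2s}\varphi(x)\,dx\,d\bar x$. Pulling back through an embedded resolution $h$ of $\{f=0\}$ and decomposing, via a partition of unity, into a finite sum of monomial integrals $\int|y_i|^{2(N_is+\nu_i-1)}dy_i\,d\bar y_i$ exactly as in the first proof of Theorem \ref{poles archimedean}, one checks that for real $s$ the (nonnegative) integrand makes $Z(s)$ finite precisely when $\Re(s)>-\nu_j/N_j$ for every $E_j$ with $a\in f(E_j)$ — here one uses that $\varphi\circ h$ is strictly positive near each such $E_j$, and that for $B$ small no other $E_j$ meets $h^{-1}(B)$, since $f(E_j)$ is closed. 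Hence $-c$ is the abscissa of convergence of $Z(s)$. By the complex analogue of Theorem \ref{poles archimedean2} (Remark \ref{complex and Schwartz}(1)), $Z(s)$ continues meromorphically to $\C$; being the Mellin transform $\int_0^\infty t^s\,d\mu(t)$ of the nonnegative measure $\mu$ obtained by pushing $\varphi\,dx\,d\bar x$ forward along $x\mapsto|f(x)|^2$, it must, by Landau's theorem, be singular at its abscissa of convergence, and for a meromorphic function that means a pole. So $s_0:=-c$ is a pole of $Z(s)$. Applying now the local form of the same theorem — legitimate because $\varphi$ is supported near $a$, where the functional equation $P_a(s)f^{s+1}=b_{f,a}(s)f^s$ holds — we get $s_0=\lambda-k$ for some root $\lambda$ of $b_{f,a}$ and some $k\in\Z_{\ge0}$, so $\lambda=-c+k\ge-c$. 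Together with the easy inequality $\lambda\le-c$ this forces $k=0$ and $\lambda=-c$. Hence $-\lct_a(f)$ is a root of $b_{f,a}$, and by the easy inequality it is the largest one; this proves (2). For (1): apply (2) at $a_0$ to see that $-\lct(f)=-\lct_{a_0}(f)$ is a root of $b_{f,a_0}$, hence of $b_f$, while Theorem \ref{roots b} shows every root of $b_f$ is $\le-\lct(f)$; so $-\lct(f)$ is the largest root of $b_f$.

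The one genuinely non-formal step — and the expected main obstacle — is the claim that the abscissa of convergence $-\lct_a(f)$ is \emph{attained as a pole}, not merely as the edge of the region of holomorphy. This is exactly where the positivity of $\varphi$ and Landau's theorem for Mellin (or Laplace) transforms of positive measures are needed; without positivity, the $\Gamma$-factor refinement recorded after Theorem \ref{poles archimedean2} shows that cancellations could in principle destroy this pole. Everything else is resolution-formula bookkeeping together with direct appeals to Theorems \ref{poles archimedean2} and \ref{roots b} in their local forms. (A purely algebraic alternative replaces the analytic step by the theory of the $V$-filtration / mixed Hodge modules, in which $-\lct_a(f)$ is identified at once as the smallest jumping number of the multiplier ideals of $f$ near $a$ and as a root of $b_{f,a}$; but the analytic route is shorter given what is available above.)
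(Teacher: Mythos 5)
The paper does not give a proof of this theorem; it cites Kashiwara's $\mathcal{D}$-module theory \cite{Ka} via the $V$-filtration, combined with the resolution bound on roots of the $b$-function (\cite{Li}, \cite{Ko}). Your argument is correct and takes a genuinely different, analytic route: you identify $-\lct_a(f)$ as the abscissa of convergence of a complex archimedean zeta function with a nonnegative test function concentrated near $a$, promote the abscissa to an actual pole by Landau's theorem for Laplace/Mellin transforms of nonnegative measures, and feed that pole into the local $b$-function bound of Theorem \ref{poles archimedean2}, with Theorem \ref{roots b} supplying the reverse inequality. You correctly isolate the one non-formal step (positivity plus Landau) as the crux — without it, cancellations in the meromorphic continuation could destroy the pole at the abscissa. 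Your route is shorter and more self-contained given the archimedean material already developed in \S\ref{archimedean}, whereas the $\mathcal{D}$-module route avoids resolution of singularities for the hard inclusion and yields finer structural information on the $V$-filtration. Two details to tighten: (i) the ``easy half'' genuinely needs the \emph{local} version of Theorem \ref{roots b} — roots of $b_{f,a}$ lie in $\{-(\nu_j+k)/N_j : j\in S,\ a\in h(E_j),\ k\in\Z_{\geq0}\}$ — which the paper only states globally; it is what \cite{Li} and \cite{Ko} actually prove, but the global statement alone would bound roots of $b_{f,a}$ only by $-\lct(f)$, not by $-\lct_a(f)$, so you should state or cite the local version explicitly; (ii) $f(E_j)$ should read $h(E_j)$ throughout, a notational slip inherited from display (\ref{lct}).
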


\section{The case of plane curves ($n=2$)}
\label{sec:3}

Let $f\in \C[x,y]\setminus \C$.  Working globally, we take an embedded resolution of singularities $h:Y\rightarrow \A^2_\C$ of $\{f=0\}$. Working locally, we assume that $f(0)=0$, and we take an embedded resolution $h$ of the germ of $\{f=0\}$ at $0$, that is, of the restriction of $\{f=0\}$ to a small enough neighbourhood of $0$.  We use notation from Note \ref{notation embedded resolution}.

In this dimension the formulas for the topological zeta function are more concretely
\begin{equation}\label{top zeta function dim 2}
\begin{aligned}
&Z_{\top}(f;s) = \chi(Y\setminus \cup_{j\in S}E_j) + \sum_{j\in S} \frac{\chi(E_j^\circ)}{\nu_j+N_js} + \sum_{i\neq j \in S}   \frac{\chi(E_i \cap E_j)}{(\nu_i+N_is)(\nu_j+N_js)},
\\
&Z_{\top,0}(f;s) = \sum_{j\in S_e} \frac{\chi(E_j^\circ)}{\nu_j+N_js} + \sum_{i\neq j \in S}   \frac{\chi(E_i \cap E_j)}{(\nu_i+N_is)(\nu_j+N_js)},
\end{aligned}
\end{equation}
where we recall that $\chi(Y\setminus \cup_{j\in S}E_j)=\chi(\C^2\setminus \{f=0\})$.
Note that $s_0$ is a pole of order 2 of $Z_{\top}(f;s)$ or $Z_{\top,0}(f;s)$ if and only if there exist two intersecting components $E_i$ and $E_j$ such that $s_0=- \frac{\nu_i}{N_i}=-\frac{\nu_j}{N_j}$.
(A candidate pole of order 2 is always a pole of order 2 since $\chi(E_i \cap E_j)>0$ whenever $E_i \cap E_j \neq \emptyset$.)

\smallskip
We fix an exceptional component $E_0$, intersecting exactly $r$ times other components, say $E_1,\dots,E_r$.
Since $E_0$ is isomorphic to the complex projective line, we have that  $\chi(E_0)=2$ and $\chi(E_0^{\circ}) =2-r$.
In the formulas (\ref{top zeta function dim 2}), the contribution of $E_0$ is
\begin{equation}\label{contribution E}
\frac1{\nu_0 +N_0s}\Big(2-r + \sum_{i=1}^r \frac1{\nu_i +N_is}\Big).
\end{equation}
Say $\frac{\nu_0}{N_0} \neq \frac{\nu_i}{N_i}$ for $i=1,\dots,r$. Then $-\frac{\nu_0}{N_0}$ is a candidate pole of order $1$ of (\ref{contribution E}), with residue equal to
\begin{equation}\label{residue}
\frac1{N_0}\Big(2-r + \sum_{i=1}^r \frac1{\nu_i -(\nu_0/N_0)N_i}\Big).
\end{equation} We say that $E_0$ {\em does not contribute to the poles\index{non-contribution to poles}} of $Z_{\top}(f;s)$ or $Z_{\top,0}(f;s)$ if this residue is zero.

\subsection{Non-contribution}

In the context of the expectation in Remark \ref{expectation}, we verify that $E_0$ does not contribute to the poles of  $Z_{\top}(f;s)$ or $Z_{\top,0}(f;s)$ when $\chi(E_0^\circ)\geq 0$, that is, when $r=1$ or $r=2$.
This is an immediate consequence of the following lemma, that was first proven for arbitrary plane curves by Loeser \cite{Loeser} (preceded by some partial results by Strauss, Meuser and Igusa). We present a short conceptual proof, which is also the starting point of a generalized theory of relations and congruences between numerical data in arbitrary dimension $n$ \cite{Veys Relations}\cite{Veys Congruences}\cite{Veys More}.

\begin{lemma} \label{relationsbetweennumericaldata}\index{relations between numerical data}
Let the exceptional component $E_0$  intersect exactly $r$ times other components, say $E_1,\dots,E_r$.
 Denote $\kappa=-E_0^2$, where $E_0^2=E_0\cdot E_0$ is the self-intersection number of $E_0$ on $Y$.  Then
\begin{itemize}
\item[(1)]  \   $\kappa N_0=\sum_{i=1}^rN_i$, and
\item[(2)] \  $\kappa \nu_0=\sum_{i=1}^{r}(\nu_i-1)+2$.
\item[(3)] \   Denote also $\alpha_i:= \nu_i - \frac{\nu_0}{N_0} N_i$ for $i=1,\dots,r$. Then $\sum_{i=1}^{r}(\alpha_i-1)+2=0$.
\end{itemize}
\end{lemma}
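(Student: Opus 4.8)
The three identities are all manifestations of intersection theory on the smooth surface $Y$, combined with the fact that $Y$ is obtained from $\A^2$ (where the relevant divisors are principal) by a sequence of point blow-ups. The key observation is that the divisors $\div(f\circ h)=\sum_{j\in S}N_jE_j$ and $K_h=\sum_{j\in S}(\nu_j-1)E_j$ are both pulled back from $\A^2$, up to the strict transform part in the first case. First I would note that for the full exceptional locus, the relevant divisor classes behave predictably under intersection with an exceptional curve $E_0$. Concretely, I would argue that $\big(\sum_{j\in S}N_jE_j\big)\cdot E_0=0$: indeed $\sum N_jE_j=\div(f\circ h)$ is the divisor of a rational function (the pullback of $f$), hence numerically trivial, so intersecting with the complete curve $E_0$ gives $0$. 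Separating the term $j=0$ from the rest, and using that two distinct components of a simple normal crossings divisor meet transversally so $E_i\cdot E_0=\#(E_i\cap E_0)\in\{0,1\}$ — and among $E_1,\dots,E_r$ each meets $E_0$ exactly once — we get $N_0(E_0^2)+\sum_{i=1}^r N_i=0$, i.e. $\kappa N_0=\sum_{i=1}^r N_i$, which is (1).

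For (2) the same strategy applies to the relative canonical divisor. By the adjunction formula on the smooth surface $Y$, $(K_Y+E_0)\cdot E_0=2g(E_0)-2=-2$ since $E_0\cong\P^1$. Now $K_Y=K_h$ (as $K_{\A^2}$ is trivial and the relative canonical divisor is the unique representative of $K_Y-h^*K_{\A^2}$ supported on the exceptional locus, cf. the remark after Note \ref{notation embedded resolution}), so $K_Y=\sum_{j\in S}(\nu_j-1)E_j$. Thus $\big(\sum_{j\in S}(\nu_j-1)E_j\big)\cdot E_0 + E_0^2 = -2$. Splitting off $j=0$ and using $E_0^2=-\kappa$ and $E_i\cdot E_0=1$ for $i=1,\dots,r$ (and $=0$ for the remaining components), this reads $(\nu_0-1)(-\kappa)+\sum_{i=1}^r(\nu_i-1) - \kappa = -2$, that is $-\kappa\nu_0+\sum_{i=1}^r(\nu_i-1)=-2$, which rearranges to (2). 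One subtlety to check: among the components $E_i$ meeting $E_0$, the strict transform components also contribute to $K_Y$ with coefficient $\nu_i-1$ (here $\nu_i=1$ for a strict transform component, so those terms vanish), and the formula $K_Y=\sum(\nu_j-1)E_j$ does hold with $S=S_e\sqcup S_s$ — but since $\nu_j-1=0$ for $j\in S_s$ this causes no trouble; similarly in (1) the strict transform components carry their multiplicities $N_j$ and are genuinely present, but they simply do not meet $E_0$ unless listed among $E_1,\dots,E_r$, in which case they are included with the correct $N_i$.

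For (3), I would simply eliminate $\kappa$ between (1) and (2). Multiply (1) by $\nu_0/N_0$ to get $\kappa\nu_0=\sum_{i=1}^r\frac{\nu_0}{N_0}N_i$, and subtract from (2):
\[
0=\kappa\nu_0-\kappa\nu_0=\sum_{i=1}^r(\nu_i-1)+2-\sum_{i=1}^r\tfrac{\nu_0}{N_0}N_i=\sum_{i=1}^r\Bigl(\nu_i-\tfrac{\nu_0}{N_0}N_i-1\Bigr)+2=\sum_{i=1}^r(\alpha_i-1)+2,
\]
which is exactly (3).

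\textbf{Main obstacle.} The genuine content is entirely in parts (1) and (2); part (3) is formal. The step most in need of care is justifying that $\sum_{j\in S}N_jE_j$ is numerically trivial on $Y$ and that $\sum_{j\in S}(\nu_j-1)E_j$ equals the canonical divisor $K_Y$ — i.e. that one may legitimately apply "divisor of a rational function has zero intersection with any complete curve" and the adjunction formula in this setting. For a projective $Y$ this is standard; for an embedded resolution of an affine (or germ) situation one works in a suitable projective completion or, in the local case, uses that all the relevant curves $E_i$ other than $E_0$ that one cares about are exceptional and complete, so the intersection numbers with the proper (complete) curve $E_0$ are well-defined and the principal-divisor argument goes through after noting that $f\circ h$ has no zeros or poles along $E_0$ other than those recorded by the $E_j$. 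Getting this bookkeeping exactly right — especially tracking which of $E_1,\dots,E_r$ are exceptional versus strict-transform components and confirming the coefficients — is where the attention goes; the intersection-theoretic computations themselves are one-liners once the setup is in place.
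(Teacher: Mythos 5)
Your proof is correct and takes essentially the same route as the paper. Both parts (1) and (2) are obtained by intersecting $E_0$ with $\div(h^*f)=\sum_j N_jE_j$ (trivial in $\operatorname{Pic}(Y)$) and with $K_Y=K_h=\sum_j(\nu_j-1)E_j$ (via adjunction $(K_Y+E_0)\cdot E_0=-2$), exactly as in the paper, and (3) is the same formal elimination of $\kappa$; your additional remarks about the affine/local setting and the vanishing of $\nu_j-1$ on strict-transform components are correct but are implicitly handled in the paper by working in $\operatorname{Pic}(Y)$ and intersecting with the complete curve $E_0$.
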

\begin{proof}
In the Picard group $\Pic(Y)$ of $Y$, that is, the group of Cartier divisors
 on $Y$ modulo linear equivalence, we have that
$$
\sum_{j\in S} N_jE_j =h^*\mathrm{div}(f)=\mathrm{div}(h^*(f))=0 \quad \text{and}\quad
\sum_{i\in S} (\nu_j-1)E_j =K_Y-h^*K_{\A^2}=K_Y .
$$
We compute the intersection product of both expressions with $E_0$ (in $\Pic(Y)$). First,
$$
0=E_0\cdot 0 = E_0 \cdot \big( N_0E_0 + \sum_{j \neq 0} N_jE_j \big)
			=N_0E_0^2 +  \sum_{i=1}^rN_i,
$$
yielding (1). Similarly, we compute
$$
E_0\cdot K_Y=	E_0\cdot\big( (\nu_0-1)E_0+\sum_{j\neq 0}(\nu_j-1)E_j\big)
= 	(\nu_0-1)E_0^2+ \sum_{i=1}^r(\nu_i-1).
$$
Using the adjunction formula, we obtain
$$
-2=\mathrm{deg}\,  K_{E_0}=E_0 \cdot (K_Y+E_0)= \nu_0 E_0^2+\sum_{i=1}^r(\nu_i-1),
$$
yielding (2). Finally, (3) follows immediately from (1) and (2).
\end{proof}
We note that (3) could also be proven directly, as a different application of the adjunction formula, see e.g. \cite{Veys Relations}.

\begin{proposition}\label{non contribution}\index{non-contribution to poles}
Using notation above, $E_0$ does not contribute to the poles of $Z_{\top}(f;s)$ or $Z_{\top,0}(f;s)$ when $r=1$ or $r=2$.
\end{proposition}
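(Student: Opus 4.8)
The plan is to read the result off directly from part~(3) of Lemma~\ref{relationsbetweennumericaldata}, which is engineered precisely for this computation. First I would record that, in the situation in which ``non-contribution'' is defined --- namely $\frac{\nu_0}{N_0}\neq\frac{\nu_i}{N_i}$ for all $i=1,\dots,r$, equivalently $\alpha_i:=\nu_i-\frac{\nu_0}{N_0}N_i\neq 0$ for all $i$ --- the number $-\frac{\nu_0}{N_0}$ is a candidate pole of order $1$ of the contribution (\ref{contribution E}) of $E_0$, with residue $\frac1{N_0}\bigl(2-r+\sum_{i=1}^r\frac1{\alpha_i}\bigr)$ as in (\ref{residue}). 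So it suffices to show that the bracketed expression vanishes when $r=1$ and when $r=2$, using the relation $\sum_{i=1}^r(\alpha_i-1)+2=0$, i.e.\ $\sum_{i=1}^r\alpha_i=r-2$, supplied by Lemma~\ref{relationsbetweennumericaldata}(3).

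Then I would simply split into the two cases. For $r=1$ the relation forces $\alpha_1=-1$, so the bracket is $2-1+\frac1{\alpha_1}=1+(-1)=0$. For $r=2$ the relation gives $\alpha_2=-\alpha_1$, hence $\frac1{\alpha_1}+\frac1{\alpha_2}=0$ and the bracket is $2-2+\frac1{\alpha_1}+\frac1{\alpha_2}=0$. In both cases the residue vanishes, which is exactly the assertion.

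I do not expect a genuine obstacle here: all of the substance has already been absorbed into Lemma~\ref{relationsbetweennumericaldata}, whose proof is the real geometric input (intersection theory on the surface $Y$ together with the adjunction formula applied to the rational curve $E_0$). The one point worth a sentence of care is the standing hypothesis $\alpha_i\neq 0$: for $r=1$ it is automatic from the relation ($\alpha_1=-1$), and for $r=2$ it is built into the very definition of non-contribution --- if some $\alpha_i=0$ then $-\frac{\nu_0}{N_0}=-\frac{\nu_i}{N_i}$ is automatically an honest pole of order $2$, coming from the pair $E_0,E_i$, and the proposition does not concern that case. I would close by remarking that this computation retroactively explains the cancellations observed in Examples~\ref{first example} and~\ref{example}, where the spurious candidate poles all came from exceptional curves meeting the rest of the divisor exactly once or twice.
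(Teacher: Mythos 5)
Your proof is correct and follows exactly the paper's argument: both apply Lemma~\ref{relationsbetweennumericaldata}(3), i.e.\ $\sum_{i=1}^r\alpha_i=r-2$, to show the residue $\frac{1}{N_0}\bigl(2-r+\sum_{i=1}^r\frac{1}{\alpha_i}\bigr)$ vanishes when $r=1$ or $r=2$. Your extra remark that $\alpha_1=-1\neq0$ is automatic when $r=1$, and that $\alpha_i=0$ for $r=2$ would put us in the order-two-pole case excluded by the definition of non-contribution, is a sensible point of care but matches what the paper implicitly assumes.
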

\begin{proof}
The residue in these cases is $\frac1{N_0}(1+\frac1{\alpha_1})$ and $\frac1{N_0}(\frac1{\alpha_1}+\frac1{\alpha_2})$, respectively, and these expressions are zero by Lemma \ref{relationsbetweennumericaldata}(3).
\end{proof}

\subsection{Proof of the conjecture}

We present a simplification of the proofs in the literature.
In his proof in \cite{Loeser}, Loeser uses two crucial properties of the numerical data: Lemma \ref{relationsbetweennumericaldata}(3) and the following inequalities \cite[Proposition II.3.1]{Loeser}.

\begin{lemma} \label{inequalitiesbetweennumericaldata}\index{inequalities between numerical data}
Assume that $h$ is the {\em minimal} embedded resolution of singularities of $\{f=0\}$ or of its germ at $0$, respectively.
Let the exceptional component $E_0$  intersect exactly $r$ times other components, say $E_1,\dots,E_r$.
Denote  $\alpha_i:= \nu_i - \frac{\nu_0}{N_0} N_i$ for $i=1,\dots,r$. Then $-1 \leq \alpha_i < 1$, with $\alpha_i  =-1$ if and only if $r=1$.
\end{lemma}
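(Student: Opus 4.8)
The plan is to analyze the quantities $\alpha_i = \nu_i - \tfrac{\nu_0}{N_0} N_i$ by exploiting the combinatorial structure of the dual graph of the minimal embedded resolution of a plane curve, proceeding by induction on the number of blow-ups. First I would record the two cases that behave differently: when $r=1$, the component $E_0$ arises as the exceptional curve of a single blow-up of a point lying on exactly one previously-present component (or on the curve itself, for the local resolution), and Lemma \ref{relationsbetweennumericaldata}(3) forces $\alpha_1 = -1$ directly, so there is nothing more to prove. When $r=2$, the same relation gives $\alpha_1 + \alpha_2 = 0$, so it suffices to bound one of them, say $0 \le \alpha_2 < 1$, equivalently $-1 < \alpha_1 \le 0$. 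The genuinely substantive case is $r \ge 3$, and the claim there is the strict two-sided bound $-1 \le \alpha_i < 1$ (with $\alpha_i = -1$ excluded since $r \neq 1$).

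The key mechanism is that blowing up a point $P = E_a \cap E_b$ (an intersection of two components already present, with numerical data $(N_a,\nu_a)$ and $(N_b,\nu_b)$) produces a new exceptional curve $E_c$ with numerical data exactly
\[
N_c = N_a + N_b, \qquad \nu_c = \nu_a + \nu_b,
\]
while if $P$ lies on a single component $E_a$ one gets $N_c = N_a$, $\nu_c = \nu_a + 1$, and if $P$ is a point of the strict transform of $\{f=0\}$ alone, $N_c$ equals the local multiplicity and $\nu_c = 2$ (curve case). I would set up the induction so that the inductive hypothesis controls, for every intersection point $E_a \cap E_b$ present at a given stage, the normalized differences $\nu_a - (\nu_b/N_b) N_a$ and $\nu_b - (\nu_a/N_a) N_b$; minimality of the resolution is precisely what guarantees one never blows up a point that would create a $(-1)$-curve meeting only one other component (which would be contractible), so the "bad" configuration $\alpha_i = -1$ with $r \ge 2$ cannot occur. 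Concretely, for the component $E_0$ with neighbours $E_1,\dots,E_r$, I would trace back to the blow-up that created $E_0$: at that moment $E_0$ met exactly one or two components, and each subsequent blow-up at a point of $E_0$ replaces a neighbour $E_i$ by a new one $E_i'$ while preserving the sign and strict-inequality structure of $\alpha$, because the new data are sums or near-sums of old data sitting on the same side of the ratio $\nu_0/N_0$.

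The main obstacle I anticipate is bookkeeping the strict inequality $\alpha_i < 1$ uniformly under the blow-up recursion: when a neighbour $E_i$ of $E_0$ is itself modified by a later blow-up at $E_0 \cap E_i$, one must check that the replacement neighbour still satisfies the strict bound, and this requires knowing that $(\nu_0/N_0)$ is never equal to the ratio of the other component involved — exactly the kind of "all relevant ratios are distinct unless forced equal" statement that minimality supplies. I would phrase this as: the function $j \mapsto \nu_j/N_j$ is injective on each connected chain of the dual graph away from the strict transform, or more precisely that equality of consecutive ratios propagates in a controlled way; combined with Lemma \ref{relationsbetweennumericaldata}(1)--(2) (which express $\kappa N_0$ and $\kappa\nu_0$ in terms of the neighbours and pin down $\kappa = -E_0^2 \ge 1$, with $\kappa = 1$ only when $r \le 2$ after minimality), this closes the induction. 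An alternative, perhaps cleaner, route is to avoid the induction entirely and argue directly on $Y$: write the $\alpha_i$ via the intersection numbers $E_0 \cdot E_i$ and the relative canonical divisor restricted to $E_0 \cong \mathbb{P}^1$, then use that minimality forbids $(-1)$-curves in the exceptional locus meeting the rest in one point, forcing each local contribution into the stated range — but this still ultimately rests on the same combinatorial input, so I expect either presentation to hinge on the minimality hypothesis as the crux.
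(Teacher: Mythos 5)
The paper does not prove this lemma: right before stating it the text says "\dots and the following inequalities [Loeser, Proposition~II.3.1]," so the result is quoted from Loeser's 1988 paper without argument. There is therefore no in-paper proof to compare against, and your proposal has to be judged on its own.

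Your general plan --- induction on the blow-ups building the minimal resolution, tracking how $\alpha_i$ transforms --- is the right kind of strategy, and your central observation is correct and genuinely useful: if one blows up a simple double point $E_0\cap E_i$ not lying on any other component, the new neighbour $E_c$ has $N_c=N_0+N_i$, $\nu_c=\nu_0+\nu_i$, so $\alpha_c=\nu_c-\tfrac{\nu_0}{N_0}N_c=\alpha_i$, i.e.\ the $\alpha$-value attached to that side of $E_0$ is invariant under such blow-ups. But the proposal has concrete gaps that prevent it from being a proof.

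First, the invariance fails for the blow-ups that actually matter. In a minimal resolution the recurring blow-ups at points of $E_0$ are exactly those where the strict transform of $\{f=0\}$ fails to be in normal crossings, and these points typically lie on $E_0\cap E_i$ \emph{and} on the strict transform. If $P$ carries components $E_0,E_i$ and the strict transform (with data $(m,1)$ locally), the new neighbour satisfies $\alpha_c=\alpha_i+\alpha_{\mathrm{str}}-1$, not $\alpha_c=\alpha_i$. Already for the cusp $y^2-x^3$: after two blow-ups $E_1=(2,2)$ has neighbours $E_2=(3,3)$ and the strict transform $(1,1)$, both with $\alpha=0$; after blowing up their common point, $E_1$'s unique neighbour is $E_3=(6,5)$ with $\alpha=-1$. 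So $\alpha$ jumped from $0$ to $-1$, contradicting the ``preserving the sign and strict-inequality structure'' claim. Second, the base case is not established: you assert that at its creation $E_0$ meets ``exactly one or two components,'' but the centre of the creating blow-up can lie on several branches of the strict transform (so $r$ at creation can be large), and you never compute or bound the $\alpha$-values at that moment; for a satellite blow-up at $E_a\cap E_b$ one gets $\alpha_a=\tfrac{\nu_aN_b-\nu_bN_a}{N_a+N_b}$, and the bound $|\alpha_a|<1$ is exactly the nontrivial content, not a consequence of the set-up. Third, for a ``new'' neighbour created by blowing up a free point of $E_0$ lying on the strict transform with local multiplicity $m$, one finds $\alpha_c=1-\tfrac{\nu_0}{N_0}m$, and proving $\alpha_c\ge-1$ requires the bound $\nu_0 m\le 2N_0$, which needs its own argument (some control on $\nu_0/N_0$ for exceptional components of the minimal resolution). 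Finally, your appeal to minimality via $(-1)$-curves is suggestive but not pinned down: you need to say precisely which configurations minimality forbids and how that forces the inequalities. The established route (as in Loeser's original argument) goes through the combinatorics of infinitely near points and Enriques' proximity inequalities, which supply exactly the quantitative input your sketch is missing; as written, the proposal identifies the right mechanism for part of the recursion but leaves the genuinely hard steps open, as you yourself acknowledge in the final paragraph.
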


\begin{theorem}  (1)  Let $f\in \C[x,y]\setminus \C$, satisfying $f(0)=0$. If $s_0$ is a pole of  $Z_{\top,0}(f;s)$,   then $e^{2\pi i s_0}$ is a monodromy eigenvalue of $f$ at some point of $\{f=0\}$, close to $0$.

(2) Let $f\in \C[x,y]\setminus \C$. If $s_0$ is a pole of  $Z_{\top}(f;s)$,   then $e^{2\pi i s_0}$ is a monodromy eigenvalue of $f$ at some point of $\{f=0\}$.
\end{theorem}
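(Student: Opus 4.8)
The plan is to read off both sides --- the poles and a sufficient supply of monodromy eigenvalues --- from one fixed \emph{minimal} embedded resolution $h\colon Y\to\A^2_\C$ of $\{f=0\}$ (resp.\ of its germ at $0$), so that the relations of Lemma~\ref{relationsbetweennumericaldata} and the inequalities of Lemma~\ref{inequalitiesbetweennumericaldata} are available, and then to match them point by point on $\{f=0\}$ using A'Campo's formula (Theorem~\ref{A'Campo}) together with the fact that the monodromy eigenvalues of $f$ at the points near a given $a$ are exactly the zeroes and poles of the monodromy zeta functions $\zeta_b(t)$ with $b$ near $a$. By (\ref{top zeta function dim 2}) every pole $s_0$ of $Z_{\top}(f;s)$ or $Z_{\top,0}(f;s)$ has order $1$ or $2$ and equals $-\nu_j/N_j$ for some $j\in S$. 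If $s_0\in\Z$ then $e^{2\pi i s_0}=1$, a monodromy eigenvalue of $f$ at every point of $\{f=0\}$ by Proposition~\ref{monodromy properties}(3), so we may assume $s_0\notin\Z$. If $s_0=-\nu_j/N_j$ for a strict transform component $E_j$, say of the irreducible factor $\{f_j=0\}$ occurring with multiplicity $N_j$ in $f$, then $\nu_j=1$ and at a generic point $a$ of $\{f_j=0\}$ (which may be taken arbitrarily close to $0$) only that branch passes, so $P_0(t)=t^{N_j}-1$ by Proposition~\ref{monodromy properties}(3) and $e^{2\pi i s_0}=e^{-2\pi i/N_j}$ is a monodromy eigenvalue of $f$ at $a$. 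Thus we may assume that $s_0\notin\Z$ and that every component with slope $-\nu_j/N_j=s_0$ is \emph{exceptional}.

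Suppose first $s_0$ has order $2$, so it comes from two intersecting exceptional components $E_i,E_j$ of equal slope, contracted to a common point $a_0$; since $\nu_j-\frac{\nu_i}{N_i}N_j=0$, the relation Lemma~\ref{relationsbetweennumericaldata}(3) together with the inequalities of Lemma~\ref{inequalitiesbetweennumericaldata} constrain the string of same-slope components through $a_0$ and reduce this case to the previous ones or to the direct computation below. So suppose $s_0$ has order $1$. Residues at a simple pole add over the components realising it, so there is a single exceptional $E_0$ with $-\nu_0/N_0=s_0$ whose residue (\ref{residue}) is nonzero; by Proposition~\ref{non contribution} this forces $E_0$ to meet the other components at least $r\ge3$ times, so $-\chi(E_0^\circ)=r-2\ge1$. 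Put $a_0=h(E_0)$ and $q=\ord\big(e^{2\pi i s_0}\big)>1$. By A'Campo's formula the order of $\zeta_{a_0}(t)$ at $t=e^{2\pi i s_0}$ equals
\[
-\sum_{k\in S,\,q\mid N_k}\chi\big(E_k^{\circ}\cap h^{-1}\{a_0\}\big),
\]
whose term $k=0$ is $r-2\ge1$. It suffices to show this integer is nonzero: then $e^{2\pi i s_0}$ is a zero or pole of $\zeta_{a_0}$, hence a monodromy eigenvalue of $f$ at some point near $a_0$; taking $a_0$ close to $0$ gives (1), and running the same analysis at $h(E_0)$ for every pole gives (2).

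The main obstacle is precisely this non-cancellation. Because we are \emph{not} in the order $2$ case, $\alpha_i:=\nu_i-\frac{\nu_0}{N_0}N_i\ne0$ for every neighbour $E_i$ of $E_0$; combined with $-1\le\alpha_i<1$ from Lemma~\ref{inequalitiesbetweennumericaldata} this forces $q$ not to divide $N_i$ for any neighbour $E_i$ of $E_0$, so $E_0$ is isolated among the components over $a_0$ divisible by $q$ and $E_0^{\circ}$ is an entire connected component of $\bigcup_{q\mid N_k}E_k$. One then has to control the remaining components $E_k$ over $a_0$ with $q\mid N_k$ --- those lying further out in the dual tree, together with the strict transform components meeting $h^{-1}\{a_0\}$ --- and to show that their total Euler characteristic cannot equal $r-2$, by propagating the inequalities of Lemma~\ref{inequalitiesbetweennumericaldata} along the chains joining such $E_k$ to $E_0$. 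This bookkeeping, which is where Lemma~\ref{inequalitiesbetweennumericaldata} is really used, is the technical heart of the argument.
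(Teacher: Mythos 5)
Your plan follows the paper's strategy closely at the outset (minimal resolution, Proposition~\ref{non contribution}, A'Campo, isolating the exceptional $E_0$ with $r\ge3$), but it stops short of the key step and leaves both a conceptual and a structural gap.

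The conceptual gap is the ``non-cancellation'' point you yourself flag as ``the technical heart''. You prove that $E_0^\circ$ contributes $r-2\ge 1$ to the order of $\zeta_{a_0}$ at $t=e^{2\pi i s_0}$, and you correctly note that $E_0$ is an isolated connected component of $\bigcup_{d\mid N_k}E_k$; but you offer no mechanism to rule out other connected components contributing $-(r-2)$ and cancelling. The paper's way out is not to bound the outlying components one by one, but to show a uniform sign: for \emph{every} connected component $\mathcal{C}_\ell$ of $\bigcup_{j\in S_e,\; d\mid N_j}E_j$ one has
\[
\sum_{E_j\subset\mathcal{C}_\ell}\chi(E_j^\circ)=2-t\le 0,
\]
where $t$ is the number of intersection points of $\mathcal{C}_\ell$ with components outside it. The $2$ is the Euler characteristic of the ``open part'' of any tree of $\P^1$'s; the inequality $t\ge 2$ follows from Lemma~\ref{relationsbetweennumericaldata}(1) read modulo $d$: if some $E_j\in\mathcal{C}_\ell$ had exactly one exit neighbour $E_k$, then $\kappa_jN_j=\sum_{i\sim j}N_i$ would force $d\mid N_k$, a contradiction. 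Once every $\mathcal{C}_\ell$ contributes $\le 0$ and the one through $E_0$ contributes $<0$, there is nothing left to cancel against and the total is strictly negative. This is the missing ingredient; you have the right inequalities (Lemma~\ref{inequalitiesbetweennumericaldata}) in hand to show that $E_0$ is isolated, but you never invoke the tree/Euler-characteristic argument and the mod-$d$ use of Lemma~\ref{relationsbetweennumericaldata}(1) that make the bookkeeping unnecessary.

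The structural gap is the order-$2$ case, which you dispose of with ``reduce this case to the previous ones or to the direct computation below'': there is no such computation below, and order-$2$ poles are not subsumed by the order-$1$ analysis (Proposition~\ref{non contribution} does not apply, and there genuinely are two adjacent exceptional components of the same slope). The paper treats it separately: Lemma~\ref{relationsbetweennumericaldata}(1) together with the equivalences in~(\ref{alpha0}) forces the connected component of $\bigcup_{j\in S_e,\; d\mid N_j}E_j$ containing the two same-slope components to be a chain $E_1,\dots,E_m$, with $E_1$ and $E_m$ each meeting at least two components of non-divisible $N$, so the component's Euler-characteristic sum is $\le -2$. Without that argument your case analysis is incomplete. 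Finally, a small remark on your strict-transform reduction: you only treat $s_0=-1/N_j$ coming from a strict-transform slope, whereas the paper's slightly stronger reduction (if $d\mid N_j$ for \emph{any} $j\in S_s$ then we're done) is what guarantees afterwards that all components of the strict transform meeting $\mathcal{C}_\ell$ contribute exit points with $d\nmid N_j$, which is used in the $t\ge2$ count.
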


\begin{proof}\index{proof of monodromy conjecture for curves} Write $s_0=-\frac cd$, where $c$ and $d$ are coprime positive integers.

\smallskip
(1) 
By Proposition \ref{monodromy properties}(3), we have for each component $E_j, j\in S_s,$ that all $N_j$-th roots of unity are monodromy eigenvalues of $f$ at points of $E_j$ close to $0$. In particular, if $d$ divides such $N_j$, then $e^{2\pi i s_0}$ is a monodromy eigenvalue of $f$ at such a point.
  We may thus assume that $s_0 = -\frac{\nu_j}{N_j}$ for some $j\in S_e$, and moreover that $d \not\mid N_j$ for $j\in S_s$.
 From A'Campo's formula (Theorem \ref{A'Campo}), $e^{2\pi i s_0}$ is a monodromy eigenvalue of $f$ at 0 if $\sum_{j\in S_e, d|N_j} \chi(E_j^\circ) <0$.

We denote by $\mathcal{C}_\ell, \ell \in L,$ the connected components of  $\cup_{j\in S_e, d|N_j}E_j$.  We claim that
$$
\sum_{E_j\subset \mathcal{C}_\ell} \chi(E_j^\circ) \leq 0  \qquad\text{ for each } \ell\in L.
$$
Indeed, fix $\ell\in L$ and  let $P_1,\dots,P_t$ be the intersection points of $\mathcal{C}_\ell$ with components $E_k$ not belonging to $\mathcal{C}_\ell$.  Note that $d \not\mid N_j$ for such an intersecting component $E_k$ and that necessarily $t\geq 1$.
We use now that $\mathcal{C}_\ell$ is a tree of $\P^1$'s. Its \lq open part\rq, that is, $\mathcal{C}_\ell$ without all intersections points of its components, is precisely $(\cup_{E_j\subset \mathcal{C}_\ell} E_j^\circ) \cup \{P_1, \dots, P_t\}$. It is easy to see that the Euler characteristic of the open part of a tree of $\P^1$'s is $2$, and hence $\sum_{E_j\subset \mathcal{C}_\ell} \chi(E_j^\circ) = 2 -t$.  Lemma \ref{relationsbetweennumericaldata}(1) implies that $t \geq 2$, finishing the proof of the claim.  We will show next that $\sum_{E_j\subset \mathcal{C}_\ell}  \chi(E_j^\circ)   <0$ for some $\ell$, which then establishes the proof of the theorem.

As a preparation, take any exceptional component $E_0$ with $s_0=-\frac{\nu_0}{N_0}=-\frac cd$. Say it intersects exactly $r$ times other components $E_1,\dots,E_r$, for which we denote again $\alpha_i:= \nu_i - \frac{\nu_0}{N_0} N_i$ for $i=1,\dots,r$. Either $r=1$, and then $\alpha_1 = -1$ and  $\frac{\nu_0}{N_0} \neq \frac{\nu_1}{N_1}$, or $r\geq 2$, and then
\begin{equation}\label{alpha0}
d|N_i \Leftrightarrow \alpha_i \in \Z \Leftrightarrow  \alpha_i =0 \Leftrightarrow \frac{\nu_0}{N_0} =\frac{\nu_i}{N_i},
\end{equation}
for $i=1,\dots,r$, where the second equivalence follows from Lemma \ref{inequalitiesbetweennumericaldata}.

\smallskip
\noindent {\em First case: $s_0$ is a pole of order $1$.} By Proposition \ref{non contribution}, there is at least one exceptional curve, say $E_0$, with $s_0=-\frac{\nu_0}{N_0}$, intersecting $r\geq 3$ times other components.  By (\ref{alpha0}) all $E_i$ intersecting $E_0$ satisfy $d\not\mid N_i$, and hence $E_0$ is itself a connected component $\mathcal{C}_\ell$ for which $\chi(E_0^\circ)\ <0$.

\smallskip
\noindent {\em Second case: $s_0$ is a pole of order $2$.} Hence there exist intersecting $E_0$ and $E'_0$ satisfying $s_0=-\frac{\nu_0}{N_0}=-\frac{\nu'_0}{N'_0}$.  Using Lemma \ref{relationsbetweennumericaldata}(1)  and (\ref{alpha0}), it is not difficult to conclude that the connected component $\mathcal{C}_\ell$ containing $E_0$ and $E'_0$  is a \lq chain\rq, as presented in the figure below, consisting of say $E_1,  E_2, \dots, E_m \, (m\geq 2)$, where, for $2\leq i \leq m-1$, $E_i$ intersects exactly $E_{i-1}$ and $E_{i+1}$, and $E_1$ and $E_m$ each intersect at least two  other components $E_k$, all satisfying $d \not\mid N_k$. Hence $\sum_{E_j\subset \mathcal{C}_\ell} \chi(E_j^\circ) =\sum_{j=1}^m \chi(E_j^\circ) \leq -2$
	\begin{figure}[H]
			\centering
			\begin{tikzpicture}
			\useasboundingbox (-0.5,0) rectangle (14,4.9);
			\draw (2,0.9) -- (2,4.5);
\node at (2,0.7) {$\vdots$};	
			\draw (1.5,3) -- (3,4.5);
			\draw (2.5,4.5) -- (4,3);
			\draw (3.5,3) -- (5,4.5);
			\draw (6.5,4.5) -- (8,3);
						\draw (7.5,3) -- (9,4.5);
			\draw (1.5,1.5) -- (3,1.5);
	\draw (1.5,2.2) -- (3,2.2);
\draw (8.5,0.9) -- (8.5,4.5);
\node at (8.5,0.7) {$\vdots$};
\node at (3.4,1.5) {$\dots$};	
\node at (3.4,2.2) {$\dots$};	
	\draw (7.3,2.2) -- (9,2.2);
	\draw (7.3,1.5) -- (9,1.5);
\node at (7,1.5) {$\dots$};	
\node at (7,2.2) {$\dots$};	
						\node at (5.75,4.5) {$\dots$};							
			\node at (1.8,4.2) {$E_{1}$};
						\node at (3.3,4.5) {$E_{2}$};
			\node at (4.3,3) {$E_{3}$};
			\node at (5,4) {$E_{4}$};
			\node at (7.3,4.3) {$E_{m-2}$};
			\node at (7.1,3) {$E_{m-1}$};
\node at (8.8,3.2) {$E_{m}$};	
						\end{tikzpicture}
		\end{figure}
(2)   As above, we may assume that $s_0 = -\frac{\nu_j}{N_j}$ for some $j\in S_e$, and moreover that $d \not\mid N_j$ for $j\in S_s$.
Then $s_0$ must be a pole of a local zeta function $Z_{\top,a}(f;s)$ for some $a\in \{f=0\}$ and hence, as established in the proof of (1), $e^{2\pi i s_0}$ is a monodromy eigenvalue of $f$ at $a$.
\end{proof}

\begin{remark}
Loeser's proof of the strong version, involving roots of the b-function \cite{Loeser}, also starts with showing that a pole of the zeta function can only arise from an exceptional component $E_j$ intersecting at least three times other components (or from the strict transform). In order to show that such a pole is a root of the b-function, more heavy machinery is needed: asymptotic expansions of certain fibre integrals, and a result of Deligne and Mostow \cite{DM}, constructing a nonzero cohomology class of a certain local system on $E_j^\circ$.
\end{remark}

\begin{remark} The strong version of the monodromy conjecture even has a more precise formulation, saying that $b_{f,0}(s) \cdot Z_{\top,0}(f;s)$ is a polynomial, that is, the order of a pole of $Z_{\top,0}(f;s)$ is at most its multiplicity as root of $b_{f,0}(s)$.  When $n=2$, this just means as extra statement that a pole of order $2$ is a root of multiplicity $2$. In  \cite{Loeser}, Loeser verified it when $f$ is reduced.  (In fact, $Z_{\top,0}(f;s)$ has at most one pole of order $2$, see below, which was not known at that time.)

For more results concerning order of poles versus multiplicity of roots, and also sizes of Jordan blocks of monodromy, see \cite{MTV1}\cite{MTV2}.
\end{remark}

\begin{remark}
For $n=2$, a variant of the topological zeta function and the monodromy conjecture for meromorphic germs was treated in \cite{GL}.
\end{remark}

\subsection{Structure of resolution graph and determination of all poles}

Considering  the minimal embedded resolution of $f$, a finer use of Lemma \ref{relationsbetweennumericaldata} and Lemma \ref{inequalitiesbetweennumericaldata} yields
\begin{itemize}
\item  a nice structure theorem for its resolution graph, which is of independent interest,
\item  a conceptual description of {\em all} poles of $Z_{\top,0}(f;s)$,
\item  the fact that  $Z_{\top,0}(f;s)$ has at most one pole of order two, which is then minus the log canonical threshold of $f$ at $0$.
\end{itemize}
Below we mention the precise results, and refer to the original paper for proofs. These are local results; so  we consider only the germ of $f$ at the origin. (We also exclude the trivial case where this germ has already normal crossings.)

Let $h$ be the {\em minima}l embedded resolution of the germ  of $\{f=0\}$ at $0$. The {\em dual graph}\index{dual resolution graph} associated to this resolution consists of the following data. The vertices of  the graph correspond to the irreducible components $E_j, j\in S$, where the exceptional components are represented by a dot and the (analytically irreducible) components of the strict transform by a circle. Two vertices are connected by an edge precisely when the associated components intersect. It is well known that this dual graph is a tree, where all circles are end vertices.
We decorate the vertex corresponding to $E_j$ with the value $\frac{\nu_j}{N_j}$.

A vertex with at least three edges is depicted in the following way.
\begin{figure}[H]
	\centering
	\begin{tikzpicture}[scale=0.8]
	\vertex (a)[white] at (0,1) [label=below:]{};
	\vertex (b)[white] at (0,0.33) [label=below:]{};
	\vertex (c)[white] at (0,-0.33) [label=below:]{};
	\vertex (d)[white] at (0,-1) [label=above:]{};
	\vertex (e)[fill] at (1,0) [label=above:]{};
	\vertex (f)[white] at (3,0) [label=above:]{};
	
	\path
	(b) edge (e)[dashed]
	(c) edge (e)
	;
	\path
	(a) edge (e)
	(d) edge (e)
	(e) edge (f)
	;
	\end{tikzpicture}
\end{figure}

\begin{theorem}[\cite{Veys Determination}]
\label{stuctureofdualgraph2}\index{dual resolution graph}
Let $f\in \C[x,y]\setminus \C$, satisfying $f(0)=0$. Consider the {\em minimal} embedded resolution $h$ of the germ of $\{f=0\}$ at $0$, where we use notation from Note
 \ref{notation embedded resolution}. Also, denote by $\mathcal{M}$ the locus (with edges) where $\frac{\nu_j}{N_j}$ is minimal. Then $\mathcal{M}$ is connected and has one of the following possible forms, where $r\geq 0$.
\begin{figure}[H]
	\centering
	\begin{minipage}{.4\textwidth}
		\centering
	\begin{tikzpicture}[scale=0.8]
	\vertex (a)[white] at (0,1) [label=below:]{};
	\vertex (b)[white] at (0,0.33) [label=below:]{};
	\vertex (c)[white] at (0,-0.33) [label=below:]{};
	\vertex (d)[white] at (0,-1) [label=above:]{};
	\vertex (e)[fill] at (1,0) [label=above:]{};
	\vertex (f)[white] at (3,0) [label=above:]{};
	
	\path
	(b) edge (e)[dashed]
	(c) edge (e)
	;
	\path
	(a) edge (e)
	(d) edge (e)
	(e) edge (f)
	;
			
	\end{tikzpicture}
	\end{minipage}%
	\begin{minipage}{.6\textwidth}
		\centering
	\begin{tikzpicture}[scale=0.8]
\vertex (a)[white] at (0,1) [label=below:]{};
\vertex (b)[white] at (0,0.33) [label=below:]{};
\vertex (c)[white] at (0,-0.33) [label=below:]{};
\vertex (d)[white] at (0,-1) [label=above:]{};
\vertex (e)[fill] at (1,0) [label=above:]{};
\vertex (f)[fill] at (2,0) [label=above:]{};
\vertex (g)[fill] at (3,0) [label=above:]{};
\vertex (h1)[white] at (3.65,0) [label=above:]{};
\vertex (h2)[white] at (4.35,0) [label=above:]{};
\vertex (i)[fill] at (5,0) [label=above:]{};
\vertex (j)[fill] at (6,0) [label=above:]{};
\vertex (k)[white] at (7,1) [label=below:]{};
\vertex (l)[white] at (7,0.33) [label=below:]{};
\vertex (m)[white] at (7,-0.33) [label=below:]{};
\vertex (n)[white] at (7,-1) [label=above:]{};

\node at (2,0.35) {$E_{1}$};
\node at (3,0.35) {$E_{2}$};
\node at (5,0.35) {$E_{r}$};

\path
(b) edge (e)[dashed]
(c) edge (e)
(j) edge (l)
(j) edge (m)
;
\path
(h1) edge (h2)[loosely dotted]
;
\path
(a) edge (e)
(d) edge (e)
(e) edge (f)
(f) edge (h1)
(h2) edge (i)
(i) edge (j)
(j) edge (k)
(j) edge (n)
;
\end{tikzpicture}
	\end{minipage}
	\centering
	\begin{minipage}{.4\textwidth}
		\centering
			\begin{tikzpicture}[scale=0.8]
		\vertex (E2)[white] at (0,1) [label=below:]{};
		\vertex (E1)[white] at (0,0) [label=below:]{};
		\vertex (e) at (1,0) [label=above:]{};
		\vertex (f)[white] at (3,0) [label=above:]{};

		\path
		(e) edge (f)
		;
				\end{tikzpicture}
	\end{minipage}%
	\begin{minipage}{.6\textwidth}
		\centering
		\begin{tikzpicture}[scale=0.8]
	\vertex (a)[white] at (0,1) [label=below:]{};
	\vertex (b)[white] at (0,0.33) [label=below:]{};
	\vertex (c)[white] at (0,-0.33) [label=below:]{};
	\vertex (d)[white] at (0,-1) [label=above:]{};
	\vertex (e) at (1,0) [label=above:]{};
	\vertex (f)[fill] at (2,0) [label=above:]{};
	\vertex (g)[fill] at (3,0) [label=above:]{};
	\vertex (h1)[white] at (3.65,0) [label=above:]{};
	\vertex (h2)[white] at (4.35,0) [label=above:]{};
	\vertex (i)[fill] at (5,0) [label=above:]{};
	\vertex (j)[fill] at (6,0) [label=above:]{};
	\vertex (k)[white] at (7,1) [label=below:]{};
	\vertex (l)[white] at (7,0.33) [label=below:]{};
	\vertex (m)[white] at (7,-0.33) [label=below:]{};
	\vertex (n)[white] at (7,-1) [label=above:]{};
	
	\node at (2,0.35) {$E_{1}$};
	\node at (3,0.35) {$E_{2}$};
	\node at (5,0.35) {$E_{r}$};
	
	\path
	(j) edge (l)[dashed]
	(j) edge (m)
	;
	\path
	(h1) edge (h2)[loosely dotted]
	;
	\path
	(e) edge (f)
	(f) edge (h1)
	(h2) edge (i)
	(i) edge (j)
	(j) edge (k)
	(j) edge (n)
	;
	\end{tikzpicture}
	\end{minipage}
\end{figure}
\noindent
(When $f$ is reduced, the last two cases, involving a component of the strict transform, cannot occur.)
	Furthermore, starting from $\mathcal{M}$, the values $\frac{\nu_i}{N_i}$ strictly increase along each path away from $\mathcal{M}$.
	\end{theorem}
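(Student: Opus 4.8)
The plan is to deduce everything from the two properties of the numerical data of the \emph{minimal} resolution singled out above. For an exceptional component $E_0$ meeting exactly $E_1,\dots,E_r$, write as before $\alpha_i=\nu_i-\tfrac{\nu_0}{N_0}N_i=N_i\big(\tfrac{\nu_i}{N_i}-\tfrac{\nu_0}{N_0}\big)$, so that Lemma~\ref{relationsbetweennumericaldata}(3) reads $\sum_{i=1}^r\alpha_i=r-2$, Lemma~\ref{inequalitiesbetweennumericaldata} reads $-1\le\alpha_i<1$ with $\alpha_i=-1$ exactly when $r=1$, and the sign of $\alpha_i$ records whether the decoration $\tfrac{\nu_i}{N_i}$ of $E_i$ is larger than, equal to, or smaller than that of $E_0$. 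Here ``$\mathcal{M}$'' is to be read as the set of \emph{all} vertices carrying the minimal decoration $m$; its connectedness is part of the assertion. Recall also that $\nu_j=1$ for $j\in S_s$, so a strict transform vertex is a one-valent vertex with decoration $\tfrac1{N_j}$.

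The crux is a refined non-contribution estimate inside a ``flat piece'' $R$, by which I mean a maximal connected set of vertices all carrying one common decoration $c$. Fix an exceptional $E_0\in R$ of valence $r$; let $p$ be the number of its neighbours lying in $R$ (those with $\alpha_i=0$) and $q$ the number of its neighbours with strictly smaller decoration (those with $\alpha_i<0$). Splitting $\sum_i\alpha_i=r-2$ into the three sign classes, the $\alpha_i=0$ terms drop out, the $q$ negative terms are each $<0$, and the remaining $r-p-q$ terms are each $<1$ by Lemma~\ref{inequalitiesbetweennumericaldata}; so if $E_0$ has \emph{any} neighbour outside $R$ one gets $r-2<r-p-q$, i.e. $p+q\le1$, while otherwise all neighbours lie in $R$ and $\sum_i\alpha_i=0$ forces $r=2$. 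Two consequences: first, a flat piece $R$ with $|R|\ge2$ has no edge running out of it to a vertex of smaller decoration, since the endpoint $v\in R$ of such an edge would be exceptional with $q\ge1$, hence $p=0$, hence would have no neighbour in $R$, contradicting $|R|\ge2$ (the one-valent strict transform case being immediate from connectedness); second, a one-point flat piece $\{E_0\}$ with $E_0$ exceptional has at most one neighbour of smaller decoration (there $p=0$, so $q\le1$).

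Now contract each flat piece of the dual graph to a point and orient each surviving edge from the endpoint of larger decoration to the one of smaller decoration; by the previous paragraph the resulting finite oriented tree $T'$ has all out-degrees at most $1$. Since a tree has one fewer edge than vertex, exactly one vertex of $T'$ has out-degree $0$. Every vertex of the original graph carrying the minimal decoration $m$ has no neighbour of smaller decoration, so the flat piece containing it has out-degree $0$ and is therefore that unique one; hence all such vertices lie in a single flat piece, which is precisely the statement that $\mathcal{M}$ is connected. The same count shows every flat piece other than $\mathcal{M}$ is a single point, so off $\mathcal{M}$ adjacent vertices never share a decoration; and following out-edges from any vertex is the unique path in the tree to $\mathcal{M}$ and is strictly decreasing, which read backwards is the assertion that $\tfrac{\nu_i}{N_i}$ strictly increases along every path leaving $\mathcal{M}$.

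For the shape of $\mathcal{M}$ I apply the crux with $R=\mathcal{M}$, so that $q=0$ throughout. An exceptional vertex of $\mathcal{M}$ then either is $2$-valent with both neighbours in $\mathcal{M}$, or has at most one neighbour in $\mathcal{M}$ and — when it has one — total valence $\ge3$ (from $r-2=\sum_{\mathrm{rest}}\alpha_i>0$); a strict transform vertex in $\mathcal{M}$ is one-valent. Hence the subtree $\mathcal{M}$ has all its non-end vertices of valence $2$ within it, so it is a chain (possibly a single vertex), whose end vertices are exceptional of valence $\ge3$ or components of the strict transform — this is the list of pictures. When $f$ is reduced a strict transform vertex has decoration $1$, which equals $\lct_0(f)=m$ only for a normal crossings germ; as that case is excluded, $\mathcal{M}$ then contains no strict transform vertex, recovering the last parenthetical remark. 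The genuinely non-formal point in all of this is the estimate of the second paragraph; once it is in place, the connectedness, the monotonicity and the shape are all formal consequences of $T'$ being a tree with out-degrees bounded by $1$.
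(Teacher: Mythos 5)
The overall strategy is correct and tracks the route the paper advertises (a ``finer use'' of Lemma~\ref{relationsbetweennumericaldata}(3) and Lemma~\ref{inequalitiesbetweennumericaldata}); the ``crux'' estimate $p+q\le 1$, the passage to the contracted tree with out-degrees $\le 1$, and the resulting uniqueness of the out-degree-$0$ flat piece are all sound and give connectedness, singleton-ness of the other flat pieces, and strict monotonicity cleanly. The derivation that an $\mathcal{M}$-end which is exceptional must have total valence $\ge 3$ (via $r-2=\sum\alpha_i>0$ with $p\le 1$, $q=0$) is also correct.

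However, there is a genuine gap in the shape analysis. Your argument only yields: $\mathcal{M}$ is a chain whose ends are either exceptional of valence $\ge 3$ or strict-transform vertices. That leaves open a fifth configuration not in the theorem's list, namely a chain whose \emph{both} ends are strict-transform vertices, $v_1 - E_1 - \cdots - E_k - v_2$. You assert this ``is the list of pictures'', but it isn't; you must rule it out. This can be done, but it requires an input beyond the two lemmas: in that configuration every interior $E_i$ is $2$-valent with both neighbours in $\mathcal{M}$ and the ends are $1$-valent, so there is no vertex of $\mathcal{M}$ with a neighbour outside $\mathcal{M}$; since the dual graph is connected, $\mathcal{M}$ would be the \emph{entire} dual graph. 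But the minimal embedded resolution of a germ that is not already normal crossings always contains an exceptional vertex of valence $\ge 3$ --- the last-created exceptional curve is a $(-1)$-curve, and if it met at most two other components one could blow it down while preserving simple normal crossings, contradicting minimality. A chain in which every exceptional vertex is $2$-valent therefore cannot be the whole minimal dual graph, and the ``two strict-transform ends'' case is excluded. (Equivalently: strict transforms are $1$-valent ends, so they cannot be adjacent, forcing $k\ge 1$, and then the graph would be contractible down to a normal-crossings germ.) Without this extra step the shape statement is not actually proved. Everything else --- including the reduced-case remark via $\lct_0(f)=m<1$ --- is fine.
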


Theorem \ref{stuctureofdualgraph2} implies in fact  that  $Z_{\top,0}(f;s)$ has {\em at most one} pole of order $2$, which is then minus
 the  log canonical threshold of $f$ at $0$.

\begin{remark}\index{log canonical threshold}
(1) The following generalization to higher dimensions was conjectured  in \cite{LaeremansVeys}, and proven by Nicaise and Xu \cite{NicaiseXu}.
	Let $f\in \C[x_1,\dots,x_n]\setminus \C$.  A pole of order $n$ of $Z_{\top,0}(f;s)$ must be equal to minus the  log canonical threshold of $f$ at $0$. In particular, there is at most one  pole of order $n$.

(2) Let again $f\in \C[x_1,\dots,x_n]\setminus \C$. A pole of order $n$ of $Z_{\top,0}(f;s)$ is always of the form $-\frac1k$ for some positive integer $k$ \cite{LaeremansVeys}.
\end{remark}	

\begin{theorem}[\cite{Veys Determination}]
\label{determination poles}\index{determination of poles of zeta function for $n=2$}
Let $f\in \C[x,y]\setminus \C$, satisfying $f(0)=0$. Consider the {\em minimal} embedded resolution $h$ of the germ of $\{f=0\}$ at $0$, where we use notation from Note \ref{notation embedded resolution}.  Then $s_0$ is a pole of $Z_{\top,0}(f;s)$ if and only if $s_0=-\frac{\nu_j}{N_j}$, with $E_j$ either a component of the strict transform or an exceptional component intersecting at least three times other components.
\end{theorem}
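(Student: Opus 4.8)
The plan is to go through the candidate poles of $Z_{\top,0}(f;s)$ one value at a time and decide, directly from the formula (\ref{top zeta function dim 2}), whether cancellation occurs. Fix a candidate value $v$ and set $T_v:=\{j\in S\mid \nu_j/N_j=v\}$; the claim is that $-v$ is a pole exactly when $T_v$ contains a strict-transform component or an exceptional component meeting at least three other components. The order-two poles are disposed of first: if two components $E_i,E_j$ with $i,j\in T_v$ intersect, then (as already noted after (\ref{top zeta function dim 2})) $-v$ is a genuine pole of order two, since the coefficient of $(s+v)^{-2}$ is a sum of strictly positive numbers $\chi(E_i\cap E_j)/(N_iN_j)$. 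By Theorem \ref{stuctureofdualgraph2} this occurs only for $v=\lct_0(f)$, with $\mathcal{M}$ a ``chain with two star-shaped ends'' (or one of its variants with a strict transform); in each of the possible shapes $\mathcal{M}$ contains a component of the required type. This settles the theorem for order-two poles, in both directions.

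Assume from now on that no two components of $T_v$ intersect, so $-v$ can only be a pole of order one. Expanding (\ref{top zeta function dim 2}) at $s_0=-v$, the residue is
\[
R=\sum_{j\in T_v}\frac{1}{N_j}\Bigl(\chi(E_j^{\circ})+\sum_{i\sim j}\frac{1}{\alpha_i^{(j)}}\Bigr),\qquad \alpha_i^{(j)}:=\nu_i-\frac{\nu_j}{N_j}N_i,
\]
where the inner sum runs over the neighbours $E_i$ of $E_j$, all with $\alpha_i^{(j)}\neq 0$ (otherwise $T_v$ would not be independent), and using $\chi(E_i\cap E_j)=1$. By Proposition \ref{non contribution} the $j$-th summand vanishes whenever $E_j$ is exceptional and meets exactly one or two other components. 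Hence, if $T_v$ contains no component of the required type, then $R=0$ and $-v$ is not a pole; this already gives the ``only if'' direction.

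It remains to prove $R\neq 0$ as soon as $T_v$ contains a strict-transform component or an exceptional component of valence $r_j\geq 3$, and this is the heart of the proof. If $v=\lct_0(f)$ the argument is short: $\mathcal{M}$ is connected, so in the order-one situation $T_v=\mathcal{M}$ is a single vertex, which either contributes the single nonzero term $1/(N_j\alpha^{(j)})$ (strict transform case) or, being exceptional of valence $r_j\geq 3$ with all $\alpha_i^{(j)}\in(0,1)$, contributes $R=\frac{1}{N_j}\bigl(2-r_j+\sum_i 1/\alpha_i^{(j)}\bigr)>0$, using $\sum_i\alpha_i^{(j)}=r_j-2$ (Lemma \ref{relationsbetweennumericaldata}(3)) and $1/\alpha_i^{(j)}>1>\alpha_i^{(j)}$. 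The genuinely delicate case is $v>\lct_0(f)$: every $E_j\in T_v$ then lies off $\mathcal{M}$, and since the dual graph is a tree, $\mathcal{M}$ is connected, and the values $\nu_i/N_i$ strictly increase along paths leaving $\mathcal{M}$ (Theorem \ref{stuctureofdualgraph2}), exactly one neighbour of such an $E_j$ — the one on the geodesic towards $\mathcal{M}$ — has $\alpha^{(j)}\in(-1,0)$ (the endpoint $-1$ being excluded by Lemma \ref{inequalitiesbetweennumericaldata} since $r_j\geq 3$), while every other neighbour has $\alpha^{(j)}\in(0,1)$. Combining these sign constraints with Lemma \ref{relationsbetweennumericaldata}(3) and a careful accounting of which elements of $T_v$ occur, and on which branches of the tree — essentially the finer use of Lemmas \ref{relationsbetweennumericaldata} and \ref{inequalitiesbetweennumericaldata} underlying Theorem \ref{stuctureofdualgraph2} — one concludes that the nonzero summands of $R$ cannot cancel. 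I expect this last bookkeeping, rather than any single estimate, to be the real obstacle: an isolated summand coming from an exceptional vertex of valence $\geq 3$ off $\mathcal{M}$ need not have a predetermined sign, so ruling out cancellation requires knowing precisely how the sublevel locus $\{\nu_i/N_i\leq v\}$ sits inside the dual graph.
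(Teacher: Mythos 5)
Your structure is sound: you correctly dispose of the order-two case via Theorem \ref{stuctureofdualgraph2}, write the order-one residue $R$ vertex by vertex, kill the valence-$\leq 2$ vertices by Proposition \ref{non contribution}, and settle the case $v=\lct_0(f)$ completely. But the heart of the theorem --- nonvanishing of $R$ when $v>\lct_0(f)$ --- is exactly where you stop: you write ``one concludes that the nonzero summands of $R$ cannot cancel'' with no argument, and then concede that a single summand from a valence-$\geq 3$ vertex off $\mathcal{M}$ ``need not have a predetermined sign.'' That diagnosis is wrong, and it is what keeps your proof from closing.

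Each relevant summand \emph{does} have a fixed sign, and no combinatorial bookkeeping across branches is required. Fix an exceptional $E_j\in T_v$ of valence $r\geq 3$ lying strictly off $\mathcal{M}$. Exactly one neighbour, say $E_1$, lies on the path to $\mathcal{M}$, and by Theorem \ref{stuctureofdualgraph2} together with Lemma \ref{inequalitiesbetweennumericaldata} you have $\alpha_1\in(-1,0)$ and $\alpha_i\in(0,1)$ for $i=2,\dots,r$, with $\sum_{i=1}^r\alpha_i=r-2$ by Lemma \ref{relationsbetweennumericaldata}(3). Put $a:=-\alpha_1\in(0,1)$, so $\sum_{i\geq 2}\alpha_i=r-2+a$. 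For each fixed $i_0\geq 2$ the remaining $r-2$ positive terms are each $<1$, hence sum to $<r-2$, which forces $\alpha_{i_0}>a$; so $\alpha_i\in(a,1)$ for all $i\geq 2$. Since $t\mapsto 1/t$ is strictly convex, the supremum of $\sum_{i\geq 2}1/\alpha_i$ over $\alpha_i\in[a,1]$ with $\sum_{i\geq 2}\alpha_i=r-2+a$ is $(r-2)\cdot 1+1/a$, attained only on the (excluded) boundary; therefore strictly
\[
2-r+\sum_{i=1}^{r}\frac{1}{\alpha_i}\;=\;2-r-\frac{1}{a}+\sum_{i\geq 2}\frac{1}{\alpha_i}\;<\;2-r-\frac{1}{a}+(r-2)+\frac{1}{a}\;=\;0,
\]
so the contribution $\frac{1}{N_j}\bigl(\chi(E_j^\circ)+\sum_i 1/\alpha_i\bigr)$ is strictly negative. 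A strict-transform $E_j\in T_v$ off $\mathcal{M}$ meets a unique exceptional $E_1$, necessarily with $\alpha_1<0$, and contributes $1/(N_j\alpha_1)<0$. Combined with the zero contribution of valence-$\leq 2$ exceptional vertices, this gives $R<0$ as soon as $T_v$ contains a component of the required type, and $R=0$ otherwise. The ``careful accounting of which elements of $T_v$ occur, and on which branches'' that you anticipate is not needed: all nonzero summands have the same sign, so cancellation is impossible by inspection.
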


Note that Example \ref{extop} is consistent with this theorem (as it should).
So the \lq false\rq\ candidate poles arise {\em only} from exceptional components intersecting once or twice other components.

\begin{remark}
 There is a certain partial resolution of $f$, where exactly those components  do not appear, the so-called {\em relative log canonical model\index{relative log canonical model}}, a concept from the Minimal Model Program.
Here we just note that this partial resolution space has in general some mild singularities, more precisely cyclic quotient singularities.
 A compact formula for $Z_{\top,0}(f;s)$ (as well as for the $p$-adic zeta function), in terms of this partial resolution, where thus all appearing candidate poles are effective poles, was derived in \cite{Veys lcmodel}. Such a formula for the motivic zeta function, in a more general twodimensional setting, was treated in  \cite{RV2}.

This work has been generalized to arbitrary dimension: compact formulas,  both in topological and motivic setting, were derived in terms of certain partial resolutions where the ambient space has abelian quotient singularities, see \cite{LMVV}. These are used in for instance work on the monodromy conjecture in a nondegenerate setting \cite{Q}, mentioned in \S\ref{special polynomials}.
\end{remark}

\section{Higher dimension}
\label{sec:4}

\subsection{Strategy for surfaces ($n=3$)}\label{sec:surfaces}

Recall that, for $n=2$, one could expect that (generically)  an exceptional curve $E_j$ does not contribute to the poles of $Z_{\top}(f;s)$ or $Z_{\top,0}(f;s)$ as soon as $\chi(E_j^{\circ}) \geq 0$.
We proved this in Proposition \ref{non contribution}, which was an important step in the proof of the monodromy conjecture for curves.

For $n=3$, the corresponding expectation is that (generically) an exceptional surface $E_j$ does not contribute to the poles of $Z_{\top}(f;s)$ or $Z_{\top,0}(f;s)$ as soon as $\chi(E_j^{\circ}) \leq 0$.
We proved this expectation almost in complete generality \cite{Veys pv integrals}; in this subsection we outline the main ideas in the proof, including some new geometric results.
(We formulate this outline for $Z_{\top}(f;s)$, but it is also valid for $Z_{\top,0}(f;s)$.)

\smallskip
Let $f\in \C[x,y,z]\setminus \C$. We fix an embedded resolution $h:Y\to \A_\C^3$ of $\{f=0\}$, for which we use notation from Note \ref{notation embedded resolution}.  Further, we may and will assume that $h$ is constructed as a composition of admissible blow-ups as in \cite{Hironaka}, where each centre of blow-up is contained in the consecutive strict transform of $\{f=0\}$.
 We focus on exceptional surfaces\index{exceptional surface} $E_j$ that are mapped to a point by $h$, and hence are projective (in the case of isolated singularities we may assume this for all exceptional $E_j$).

When is $\chi(E_j^{\circ}) \leq 0$?  In order to investigate this, it is important to realize the difference with the case of curves. When $n=2$, an exceptional curve $E_j \subset Y$ is created as a $\P^1$ by blowing up some point during $h$, and its strict transform by the further blow-ups in $h$ stays isomorphic to $\P^1$. Furthermore, the number of intersection points of that exceptional curve with other components does not change during the resolution process.  When $n=3$, an exceptional surface $E_j\subset Y$ is either created, by blowing up a point, as  $E_j^{*} \cong \P^2$\index{projective plane}, or, by blowing up a nonsingular curve $Z$, as a ruled surface\index{ruled surface} $E_j^{*}$ over $Z$ (by which we mean a $\P^1$-bundle over $Z$). During further blow-ups of $h$, this $E_j^{*}$ often \lq changes\rq.  Indeed, assume that the next centre of blow-up is a point $P$ in $E_j^{*}$, or a curve intersecting $E_j^{*}$ transversally in $P_1,\dots,P_m$. Then the strict transform of $E_j^{*}$ after this blow-up is isomorphic to $E_j^{*}$, blown up in $P$, or in $P_1,\dots,P_m$, respectively. The remaining case, a blow-up with centre a curve contained in  $E_j^{*}$, does not change  $E_j^{*}$. Finally, at the end of the resolution process, $E_j\subset Y$ is isomorphic to the projective plane or  ruled surface $E_j^{*}$, blown up  finitely many (say $r$) times.
Moreover, then $E_j$ has $r$ more intersecting (irreducible) curves with other components than $E_j^*$ at the moment of its creation.

However, this change does not influence the \lq open part\rq\ $E_j^\circ$ of that exceptional surface $E_j$. Indeed, let $C$ denote the union of all intersections of $E_j^{*}$ with other components after creation. Then, in the process of constructing $E_j$ out of $E_j^{*}$ by point blow-ups, all changes occur within $C$ and its further consecutive total transform in that exceptional surface.  As a conclusion,
$$E_j^{*}\setminus C   \cong  E_j \setminus \{\text{intersection of } E_j \text{ with other components in } Y\} = E_j^\circ,$$
and hence $\chi(E_j^{*}\setminus C) = \chi (E_j^\circ)$.

When the surface $E_j$  (or, equivalently, $E_j^*$) is not rational, it turns out that the condition $\chi(E_j^{\circ}) \leq 0$ is quite special, and that in these cases $E_j$ does not contribute to the poles of $Z_{\top}(f;s)$, see \cite{Veys BullSMF} and \cite{Veys pv integrals}.
But already in the case $E_j^* \cong \P^2$, there is a whole zoo of configurations $C \subset \P^2$ with $\chi(\P^2 \setminus C)= 0$, and an even larger zoo with  $\chi(\P^2 \setminus C)\leq 0$. For instance, there exist such $C$ with any possible number of irreducible components, and of arbitrarily high degree and multiplicity of the singularities of $C$.

\begin{example} As an illustration, we present two examples of such $C=\cup_\ell C_\ell \subset \P_\C^2$ with $\chi(\P^2 \setminus C)= 0$.

(1) Let $C_0=\{y^kz - x^{k+1}=0\}$, where $k \geqslant 2$;   $C_1 = \{y = 0\}$;  $C_2=\{ z = 0\}$;  $C_3 =\{x= 0\}$.

(2) Let $C_0 =\{ \prod^s_{i=1} (y - a_ix)^{m_i} - x^{k+1}=0\}$, where $s \geqslant 1$, $m_i \geqslant 1$ for $1\leq i \leq s$, and     $k = \sum^s_{i=1} m_i \,\,(\geqslant
2)$;    $C_i = \{y - a_i x=0\}$  ($1 \leqslant i \leqslant s$);   $C_j =\{ y - b_jx=0\}$  ($1 \leqslant j \leqslant t$), where $t \geqslant 0$ and the numbers $a_i$ and $b_j$ are all different.

\bigskip
\centerline{
\beginpicture
\setcoordinatesystem units <.3truecm,.3truecm>

\putrectangle corners at -1 0 and 12 11
\putrule from 1 7 to 9 7
\putrule from 8 1 to 8 10
\plotsymbolspacing=.2pt
\setlinear    \plot  2 9   9 2  /
\plotsymbolspacing=.3pt
\circulararc 90 degrees from 4 7 center at 4 10
\circulararc -120 degrees from 4 7 center at 4 3
\plotsymbolspacing=.4pt

\put {$C_0$} at 6.7 1.1
\put {$C_1$} at 10 7
\put {$C_2$} at 8.8 9.6
\put {$C_3$} at 1.4 9.6

\put {(1)} at 5.5 -1.4

\setcoordinatesystem units <.3truecm,.3truecm> point at -15 0

\putrectangle corners at 0 0 and 14 11
\putrule from 1 5 to 13 5
\plotsymbolspacing=.3pt
\setlinear    \plot  4 1  8 9  /
              \plot  8 1  4 9  /
              \plot  2 6.3333  12 3 /
              \plot  2 7.6666  12 1   /

\plotsymbolspacing=.4pt
\ellipticalarc axes ratio 3:1 180 degrees from 9 4 center at 6 4
\startrotation by -.447 -.854  about 6 5
\ellipticalarc axes ratio 3:1  -90 degrees from 6 5 center at 6 4
\stoprotation
\startrotation by -.447  .854
\ellipticalarc axes ratio 3:1 90 degrees from 6 5 center at 6 4
\stoprotation

\setquadratic    \plot   7 9   6.75 7   6 5  /
                  \plot   5 9   5.25 7   6 5  /

\circulararc 60.435 degrees from 3 4 center at 4.383 4.15
\circulararc -60.435 degrees from 9 4 center at 7.617 4.15

\put {$C_0$} at 2.7 2.7
\put {$C_i$} at 13 5.6
\put {$C_j$} at 1.3 7

\put {(2)} at 7 -1.4

\endpicture  }
\end{example}

\medskip
For many more examples of such families, and all possible cases where the degree of $C$ is at most $4$, see \cite{Veys BullSMF}.
In that paper we also formulated the following as a conjecture. It was first shown by de Jong and Steenbrink in \cite{dJS}; see also \cite{GP} and \cite{Kojima} for other proofs.

\begin{theorem}\label{rational}\index{Euler characteristic}\index{curve complements in the projective plane}\index{projective plane}
If $C$ is a curve in $\P_\C^2$ with $\chi(\P^2 \setminus  C) \leq 0$, then all irreducible components of $C$ are rational.
\end{theorem}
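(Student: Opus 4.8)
Write $C=\bigcup_{\ell=1}^{k}C_\ell$ for the decomposition into irreducible components, with $C_\ell$ of degree $d_\ell$ and geometric genus $g_\ell$, and put $U:=\P^2\setminus C$. The first move is to turn the hypothesis into a numerical constraint. Since $C$ is a nonzero effective, hence ample, divisor on $\P^2$, the surface $U$ is smooth and \emph{affine}; by the Andreotti--Frankel theorem it is homotopy equivalent to a CW-complex of real dimension $\leq 2$, so $b_i(U)=0$ for $i\geq 3$, and a standard computation gives $b_1(U)=k-1$. Hence $\chi(U)=2-k+b_2(U)$. Poincaré duality for $U$ together with the long exact sequence of the pair $(\P^2,C)$ identifies $b_2(U)$ with $\dim H^1(C,\C)$, and the weight filtration on the mixed Hodge structure of the projective curve $C$ gives $\dim H^1(C,\C)=\rho+2\sum_\ell g_\ell$, where $\rho\geq 0$ is the first Betti number of the dual graph of $C$. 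Altogether
\[
\chi(\P^2\setminus C)\;=\;2-k+\rho+2\sum_{\ell=1}^{k}g_\ell ,
\]
so the hypothesis forces $2\sum_\ell g_\ell\leq k-2-\rho$: the configuration must be "tree-like" and of small total genus. As the examples of many concurrent lines show, this alone does not bound the individual $g_\ell$, so genuinely geometric input is needed. (A short additivity argument, peeling components off via $\chi(\P^2\setminus C)=\chi(\P^2\setminus(C\setminus C_1))-\chi(C_1)+\#\big(C_1\cap(C\setminus C_1)\big)$ and using that any two plane curves meet, reduces the statement to the case where $C$ has exactly one component $E$ of positive genus, necessarily with $\deg E\geq 3$.)

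The heart of the argument is to rule this out, via the structure theory of open surfaces. First I would show $\overline{\kappa}(U)\leq 1$: if $\overline{\kappa}(U)=2$, pass to a logarithmic minimal model $(X,D)$ of $(\P^2,C)$ — each step contracts a $(-1)$-curve and can only decrease $\chi$ of the open part — so $(K_X+D)^2>0$, and the logarithmic Bogomolov--Miyaoka--Yau inequality $(K_X+D)^2\leq 3\,c_2\big(\Omega^1_X(\log D)\big)=3\,\chi(X\setminus D)$ gives $\chi(U)\geq\chi(X\setminus D)>0$, a contradiction. Since $U$ is affine it contains no complete curve, so for $\overline{\kappa}(U)=1$ the logarithmic Iitaka fibration has general fibre $\cong\C^*$, and for $\overline{\kappa}(U)\in\{-\infty,0\}$ one obtains (Miyanishi--Sugie, respectively the classification of $\overline{\kappa}=0$ surfaces) a fibration $f:U\to B$ whose general fibre is $\A^1$ or $\C^*$. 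Compactify to $\overline{f}:X\to\overline{B}$ with $X\to\P^2$ a sequence of blow-ups; as $X$ is a rational surface its irregularity is $0$, so $\overline{B}\cong\P^1$. A general fibre $\overline{F}_t$ is then a smooth rational curve with $\overline{F}_t\cap U$ equal to $\A^1$ or $\C^*$, so $\overline{F}_t$ meets the boundary $D=X\setminus U$ in one or two points; and since the general fibre is rational, every fibre is a tree of rational curves ($p_a=0$), so no positive-genus component of $C$ is vertical.

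Let now $\widetilde{E}\subset D$ be the strict transform of a positive-genus component $E$. By the last remark $\overline{f}|_{\widetilde{E}}:\widetilde{E}\to\P^1$ is finite, of degree $\widetilde{E}\cdot\overline{F}=\#(\widetilde{E}\cap\overline{F}_t)\leq\#(\overline{F}_t\cap D)\leq 2$ for general $t$. If the degree is $1$ then $\widetilde{E}\cong\P^1$, so $g(E)=0$, a contradiction; this already disposes of the $\A^1$-fibration case (here $\widetilde{E}$ is a section) and of every section in the $\C^*$-case. The remaining possibility is that $\overline{F}_t\cap U\cong\C^*$ and $\widetilde{E}$ is a bisection carrying both boundary points of each general fibre. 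Here one uses that every fibre of a $\C^*$-fibration on a smooth surface has non-negative Euler characteristic, so $\chi(U)\leq 0$ forces $\chi(U)=0$ and makes all degenerate fibres mild (each of Euler characteristic $0$); from this one controls the branch locus of the double cover $\widetilde{E}\to\P^1$ and concludes by Riemann--Hurwitz that $g(\widetilde{E})=0$ after all. \emph{This last point — pinning down the ramification of a bisection of a $\C^*$-fibration sitting inside a blow-up of $\P^2$ when $\chi=0$ — is the main obstacle}, and it is precisely the delicate core handled, by rather different techniques, in \cite{dJS}, \cite{GP} and \cite{Kojima}. An alternative, more elementary but combinatorially heavier route bypasses the fibration language and instead estimates directly, via Bézout and the adjunction formula, the number of \emph{distinct} points in which the degree-$\geq 3$ positive-genus component $E$ can meet the (rational) rest of $C$, showing it is always large enough to make $-\chi(E)+\#\big(E\cap(C\setminus E)\big)$ outweigh any negativity of $\chi\big(\P^2\setminus(C\setminus E)\big)$; establishing that plane-geometric inequality is where the real work lies in that approach.
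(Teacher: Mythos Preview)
The paper does not actually prove this theorem; it merely states it and cites \cite{dJS}, \cite{GP} and \cite{Kojima} for three different proofs. So there is no ``paper's own proof'' to compare against, and what you have written is essentially an outline of the Gurjar--Parameswaran approach \cite{GP}: rule out $\overline{\kappa}=2$ via logarithmic Bogomolov--Miyaoka--Yau, produce an $\A^1$- or $\C^*$-fibration in the remaining cases, and then argue that a positive-genus boundary component would have to be a bisection of a $\C^*$-fibration, which one finally excludes. You are entirely candid that this last step is the crux and that you are deferring it to the cited references, which is appropriate for a survey-level argument.

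Two points where your sketch should be tightened. First, the ``short additivity argument'' reducing to a single positive-genus component is not as automatic as you suggest: your own formula gives $\chi(\P^2\setminus(C\setminus C_1))=\chi(\P^2\setminus C)+\chi(C_1)-\#(C_1\cap(C\setminus C_1))$, and removing, say, a line $C_1$ meeting the rest in a single point raises $\chi$ by $1$, so $\chi\le 0$ need not be preserved. Fortunately your main fibration argument does not really use this reduction, so you can simply drop the claim. Second, the logarithmic BMY step needs care with hypotheses: one must check that after passing to a log minimal model the pair is (dlt or) log canonical and that $K_X+D$ is nef and big before invoking $(K_X+D)^2\le 3\,\chi(X\setminus D)$; also the assertion that each contraction ``can only decrease $\chi$ of the open part'' deserves a line of justification (the $(-1)$-curve being blown down lies in the boundary, so the open part is unchanged). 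With those caveats, your outline is a faithful summary of one of the three published proofs.
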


Anyway, a classification of such configurations seems impossible. Instead, we discovered the following \lq structure theorem\rq\ \cite{Veys Structure}, that can be applied to $(X,D)=(E_j, \cup_{i\neq j} (E_i\cap E_j))$.

 Recall that a $(-1)$-curve on a nonsingular surface is a nonsingular rational curve with self-intersection $-1$ on that surface.

\begin{theorem}\label{wafels}\index{structure theorem for open surfaces}\index{Euler characteristic}\index{ruled surface}\index{projective plane}
 Let $X$ be a (complex) nonsingular projective rational surface. Let $D$ be a {\em connected} simple normal  crossings curve on $X$ satisfying $\chi(X \setminus D) \leq 0$. Assume that $X$ does not contain any $(-1)$-curve disjoint from $D$.
By \cite[Theorem 3]{GP},
 there exists a dominant morphism $\phi : X \setminus D \to \P^1$; let $h : \tilde X \to  X$ be the {\em minimal} morphism that resolves the indeterminacies of $\phi$, considered as rational map from $X$ to $\P^1$.

\medskip

\centerline{
\beginpicture
\setcoordinatesystem units <.5truecm,.5truecm>

\put{$\searrow$} at 1 1
\put{$\swarrow$} at 1 -1
\put{$\searrow$} at -1 -1
\put{$\swarrow$} at -1 1
\put{$\Bigg\downarrow$} at 0 0

\put{$g$} at -1.3 1.3
 \put{$h$} at 1.3 1.3
 \put{$\tilde \phi$} at -.4 0
 \put{$\phi$} at 1.3 -1.3
 \put{$\pi$} at -1.3 -1.3

\put{$\Sigma$} at -2 0
 \put{$\tilde X$} at  0 2
 \put{$X$} at 2 0
 \put{${\P}^1$} at  0 -2

\endpicture  }

\bigskip
(1) Then there exists a connected curve $D' \supset D$ with  $\chi (X \setminus D') \leq \chi(X \setminus D) \leq 0$, such that the morphism $\tilde \phi=\phi \circ h$ decomposes as
$\tilde X \overset g \to  \Sigma \overset \pi \to  \P^1$,
where $g$ is a composition of blowing-downs with exceptional curve in $h^{-1}D'$, and $\pi : \Sigma \to \P^1$ is a ruled surface. Moreover, $h^{-1}D'$ has simple normal crossings in
$\tilde X$.

\smallskip
(2) We can require the configuration $g(h^{-1}D') \subset \Sigma$ to be one of the configurations below.
Here $C_1$ and $C_2$ are sections of $\pi$, $C$ is a nonsingular curve for which $\pi|_C : C \to \P^1$ has degree 2 (a \lq bisection\rq), and the other curves are fibres of $\pi$. The minimal number of fibres in (a) and (b) is 2 and 1, respectively; in (c) there must pass a fibre through each ramification point of $\pi|_C$, and we can have any number of other fibres. Note that in (c) the bisection can be non--rational (and then has more than two ramification points).

\medskip
\centerline{
\beginpicture
\setcoordinatesystem units <.37truecm,.37truecm>

\putrectangle corners at 0 0 and 10 6
\putrule from 1 3 to 9 3
\putrule from 2.5 0.9 to 2.5 5.1
\putrule from 4 0.9 to 4 5.1
\setdashes
\putrule from 5.5 0.9 to 5.5 5.1
\putrule from 8 0.9 to 8 5.1

\setsolid

\put {$\dots$} at 6.75 4.5
\put {$C_1$} at 1.2 2.3

\put {(a)} at 5 -1.1

\setcoordinatesystem units <.37truecm,.37truecm> point at -10.8 0

\putrectangle corners at 0 0 and 10 6
\putrule from 1 2 to 9 2
\putrule from 1 4 to 9 4
\putrule from 3 0.9 to 3 5.1
\setdashes
\putrule from 5 0.9 to 5 5.1
\putrule from 8 0.9 to 8 5.1
\setsolid

\put {$\dots$} at 6.5 3
\put {$C_1$} at 1.2 3.3
\put {$C_2$} at 1.2 1.3

\put {(b)} at 5 -1.1

\setcoordinatesystem units <.37truecm,.37truecm> point at -21.6 0

\putrectangle corners at 0 0 and 10 6
\putrule from 1 0.9 to 1 5.1
\putrule from 9 0.9 to 9 5.1
\setdashes
\putrule from 3 0.9 to 3 5.1
\putrule from 5 0.9 to 5 5.1
\setsolid

\ellipticalarc axes ratio 4:1  360 degrees from 1 3 center at 5 3

\put {$\dots$} at 4 4.75
\put {$C$} at 7 4.5

\put {(c)} at 5 -1.1

\endpicture  }
\medskip
(3) When $D\subset X$ is obtained from a curve on  $\P^2$ by a composition of blow-ups, then the end configuration (c) is not possible.
\end{theorem}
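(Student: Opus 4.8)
The plan is to suppose that configuration (c) occurs and to derive a contradiction by computing the torsion of $H_1$ of the open surface in two incompatible ways. Consider first the side of configuration (c): by parts (1) and (2) we may write $U:=X\setminus D'$ and identify it with $\Sigma\setminus(\mathcal B\cup F_1\cup\cdots\cup F_k)$, where $\pi:\Sigma\to\P^1$ is a ruled surface, $\mathcal B$ is an irreducible bisection, the $F_i$ are fibres, and $k\ge 2$ (there is a fibre through each of the $\ge 2$ branch points of $\pi|_{\mathcal B}$). Since $H_1(\Sigma;\Z)=0$, the long exact sequence of the pair $(\Sigma,U)$ gives $H_1(U;\Z)\cong\operatorname{coker}\bigl(\mu:H_2(\Sigma;\Z)\to\Z^{k+1}\bigr)$ with $\mu(\gamma)=(\gamma\cdot\mathcal B,\,\gamma\cdot F_1,\dots,\gamma\cdot F_k)$. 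Writing $\operatorname{Pic}\Sigma=\Z\sigma\oplus\Z f$ ($f$ a fibre, $\sigma$ a section), one has $\mathcal B\equiv 2\sigma+af$, so $\gamma\cdot\mathcal B=2(\gamma\cdot\sigma)+a(\gamma\cdot f)$ carries an unavoidable factor $2$, and an elementary computation yields $H_1(U;\Z)\cong\Z^{k-1}\oplus\Z/2$. Hence configuration (c) forces $2$-torsion in $H_1(U;\Z)$. (The same computation for configurations (a) and (b), where $\mathcal B$ is replaced by one or two sections of $\pi$, produces a torsion-free group — which is consistent with (a) and (b) being the cases that do survive.)

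Consider now the side of $\P^2$: the hypothesis provides a composition of blow-ups $\rho:X\to\P^2$ under which $D$ is the total transform of a plane curve $B$. In the situation to which the theorem is applied, $B$ has at most one component of degree $>1$, so the $\gcd$ of the degrees of its components equals $1$, and by Theorem \ref{rational} all of its components are rational (since $\chi(\P^2\setminus B)\le\chi(U)\le 0$). Identifying $U$ with the complement $\P^2\setminus B'$ of a suitable plane curve $B'\supseteq B$ and using $H_1(\P^2\setminus B';\Z)\cong\Z^{c-1}\oplus\Z/\gcd(\deg B'_1,\dots,\deg B'_c)$, we obtain that $H_1(U;\Z)$ has no $2$-torsion as soon as the $\gcd$ of the degrees of the components of $B'$ is odd — which holds when $B'=B$. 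Together with the first paragraph, this contradicts the occurrence of configuration (c).

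The step I expect to be the real difficulty is controlling the passage from $D$ to the larger curve $D'$ furnished by part (1): its extra components, read off on $\P^2$ through $\rho$, could a priori have all-even degrees and so reinstate the $2$-torsion in $H_1(U;\Z)$. I would handle this either by choosing $D'$ so that the new plane curves keep the $\gcd$ of degrees odd, or — more robustly — by first tracing the bisection $\mathcal B$ backwards through the blow-downs $g$, the resolution $h$ of the indeterminacies of $\phi$, and $\rho$: this exhibits $\mathcal B$ as birational to an exceptional $\P^1$ or to a component of $B'$, hence of geometric genus $0$, so (being smooth) $\mathcal B\cong\P^1$ and $\pi|_{\mathcal B}$ has exactly two branch points; the remaining rational-bisection case can then be attacked by pulling everything back along the double cover of $\P^1$ branched at those two points, which turns (c) into a configuration of type (b) on an \'etale double cover of $U$, the incompatibility of such a cover with $U$ being a $\P^2$-complement of odd degree-gcd being the last point to pin down — along the lines of \cite{Veys Structure}.
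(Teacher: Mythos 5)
Your idea of using torsion in $H_1$ of the open surface as an obstruction to configuration (c) is a natural one, and the first half of the argument is sound: the computation $H_1\bigl(\Sigma\setminus(\mathcal B\cup F_1\cup\cdots\cup F_k);\Z\bigr)\cong\Z^{k-1}\oplus\Z/2$ is correct, the $\Z/2$-factor coming precisely from the numerical class $\mathcal B\equiv 2\sigma+af$. But the second half has a genuine gap. You write that "in the situation to which the theorem is applied, $B$ has at most one component of degree $>1$", and you conclude from this that the $\gcd$ of the degrees is $1$. This is not a hypothesis of Theorem~\ref{wafels}(3): the theorem asserts that (c) is impossible for \emph{any} $D$ obtained from a plane curve by blow-ups, with no restriction on the number or degrees of the components of $B$. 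You are importing an assumption from one possible application into the statement of a general theorem. Without it, the formula $H_1(\P^2\setminus B';\Z)\cong\Z^{c-1}\oplus\Z/\gcd(\deg B'_i)$ leaves entirely open the possibility that all components of $B'$ have even degree, in which case the $\P^2$-side \emph{also} carries $\Z/2$-torsion and no contradiction results. (The theorem, being true, does imply that this case never materializes under its hypotheses, but that is exactly what you would need to prove, so the argument is circular as it stands.)

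Your third paragraph acknowledges this, but the two proposed repairs do not close the gap. The first ("choose $D'$ so that the new plane curves keep the gcd of degrees odd") is a wish, not a construction: part (1) is an existence statement and you have no control over the degrees of the added components. The second (pulling back along the branched double cover of $\P^1$ to turn (c) into (b)) only re-derives the existence of an \'etale double cover of $U$ — which is precisely the content of the $\Z/2$-torsion you already computed — and then leaves "the incompatibility of such a cover with $U$ being a $\P^2$-complement of odd degree-gcd" as the open point, i.e., exactly the same gap in different words. The rationality of $\mathcal B$ via Theorem~\ref{rational} is a correct and useful observation, but it does not by itself forbid an even degree-gcd. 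In short: the obstruction you are computing is real, but you have not shown that a plane-curve complement satisfying the theorem's hypotheses cannot also carry this obstruction, and that is the entire content of part (3).
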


\begin{example}\label{wafelvoorbeeld}
Let $C_0 =\{y^2z - x^3=0\}$ and  $C'_0 = \{y = 0\}$ in $\P^2$. The minimal embedded resolution $X\to \P^2$ of $C_0\cup C'_0$ is obtained by three  blow-ups, with consecutive exceptional curves $C_1,C_2,C_3$. Consider  $D=C_1 \cup C_2 \cup C_3\cup C_0\cup C'_0 \subset X$ as data in Theorem \ref{wafels}; note that $\chi(X\setminus D) = \chi (\P^2 \setminus (C_0\cup C'_0))=0$. Blowing down first $C'_0$ and then $C_2$ yields a ruled surface $\Sigma$ with configuration $g(D)$ as in case (b).
In the figure, the numbers between square brackets denote the self-intersection numbers of the curves.

\bigskip

\centerline{
\beginpicture

\setcoordinatesystem units <.5truecm,.5truecm>

\putrectangle corners at -2 -3 and 4 3
\putrule from -1 0 to 3 0
\ellipticalarc axes ratio 2:3  70 degrees from 0 0 center at 0 3
\ellipticalarc axes ratio 2:3  -70 degrees from 0 0 center at 0 -3

\put {$\bullet$} at 0 0
 \put {$C_0 [9]$} at 2.8 -1.5
 \put {$C'_0 [1]$} at 3 .5
\put{$\P^2$} at 4.6 1.8

\setcoordinatesystem units <.37truecm,.37truecm> point at 20 4

\putrectangle corners at 0 0 and 10 8
\put {$\bullet$} at 3 5.5
\putrule from 1 2 to 9 2
\putrule from 1 4 to 9 4
\putrule from 3 0.9 to 3 6.5

\put {$C_1 [-3]$} at 8 3.3
\put {$C_0 [3]$} at 8 1.3
\put {$C_3 [0]$} at 4.4 7
\put {$\Sigma$} at -.8 6

\setcoordinatesystem units <.37truecm,.37truecm> point at 13 -7

\putrectangle corners at 0 0 and 12 9
\putrule from 1 2 to 9 2
\putrule from 1 4 to 9 4
\putrule from 3 0.9 to 3 7.2
\putrule from 1.5 6 to 8 6
\putrule from 7 5 to 7 8

\put {$C_1 [-3]$} at 8 3.3
\put {$C_0 [3]$} at 8 1.3
\put {$C_3 [-1]$} at 3.5 7.7
\put {$C_2 [-2]$} at 9.5  5.8
 \put {$C'_0 [-1]$} at 8.6 8
\put {$X=\tilde X$} at -1.5 7.8

\put{$\searrow$} at 10 -1.5
\put{$\swarrow$} at 2 -1.5
 \put {$g$} at 2.5 -2

\endpicture}

\bigskip
\end{example}

\begin{remark}  Theorems \ref{rational} and \ref{wafels} are nice examples of novel geometric results of independent interest, inspired by the quest for a proof of the monodromy conjecture.
\end{remark}

We return now to our fixed embedded resolution $h:Y\to \A_\C^3$ of $\{f=0\}$, and an exceptional surface $E_j$, that is created during $h$ either as a projective plane or as a (projective) rational ruled surface $E_j^*$.
Let $E_i, i\in S_j (\subset S),$ be the other components that intersect $E_j$. We assume that $E_j$ induces $-\frac {\nu_j}{N_j}$ as candidate pole of order $1$, that is,  $\frac {\nu_j}{N_j}\neq \frac {\nu_i}{N_i}$ for all $i\in S_j$.

In order to use Theorem \ref{wafels} with $X=E_j$, we need $\cup_{i\in S_j}( E_j \cap E_i)$ to be connected.  This is automatic when $E_j^*\cong\P^2$ and turns out to be \lq almost always\rq\ true when $E_j^*$ is a ruled surface. Indeed, otherwise for instance {\em no fibres} of the ruled surface $E_j^*$ are intersections with other components, which is very rare in a resolution process.

With Theorem \ref{wafels} as main input, and using the relations between numerical data from \cite{Veys Relations} and some subtle arguments,  we obtained the following very general vanishing result for candidate poles of order $1$ \cite[3.7]{Veys pv integrals}.

\begin{theorem}\index{non-contribution to poles} Let  $E_j$ be a rational projective exceptional surface,  inducing $-\frac{\nu_j}{N_j}$ as candidate pole of order $1$ of $Z_{\top}(f;s)$, and satisfying $\chi(E_j^\circ) \leq 0$.

(1) If $E_j$ is created by blowing up a point, then $E_j$ does not contribute to the poles of $Z_{\top}(f;s)$.

(2) If $E_j$ is created by blowing up a (projective) rational curve and $\cup_{i\in S_j}( E_j \cap E_i)$ is connected, then $E_j$ does not contribute to the poles of $Z_{\top}(f;s)$.
\end{theorem}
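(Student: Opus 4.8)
The plan is to reduce the statement to the vanishing of one rational expression $R$ attached to the pair $(E_j,D_j)$, with $D_j:=E_j\setminus E_j^\circ$; to record the linear relations the numerical data of $E_j$ obey; and then to feed the geometry of $E_j$ into Theorem~\ref{wafels} and evaluate $R$ fibrewise over $\P^1$, the generic fibre reproducing the $n=2$ mechanism of Proposition~\ref{non contribution}. To start, write $D_j=\bigcup_{i\in S_j}(E_i\cap E_j)$; by the simple normal crossings condition this is a nodal curve on the smooth surface $E_j$, each component $C$ lying on a unique $E_{i(C)}$ ($i(C)\in S_j$), and its nodes being the triple points $E_i\cap E_j\cap E_k$. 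Collecting all $I\subset S$ with $j\in I$ in the defining sum of $Z_{\top}(f;s)$, the contribution of $E_j$ is
\[
\frac{1}{\nu_j+N_js}\Bigl(\chi(E_j^\circ)+\sum_{C\subset D_j}\frac{\chi(C^\circ)}{\nu_{i(C)}+N_{i(C)}s}+\sum_{P\in C\cap C'}\frac{1}{(\nu_{i(C)}+N_{i(C)}s)(\nu_{i(C')}+N_{i(C')}s)}\Bigr),
\]
where $C^\circ=C\setminus\operatorname{Sing}(D_j)$ and $P$ runs over the nodes of $D_j$, $P\in C\cap C'$. Since $E_j$ induces $-\nu_j/N_j$ as a candidate pole of order $1$, $\nu_{i(C)}/N_{i(C)}\neq\nu_j/N_j$ for all $C$, so $-\nu_j/N_j$ is a simple pole; setting $\alpha_C:=\nu_{i(C)}-\tfrac{\nu_j}{N_j}N_{i(C)}\neq0$, its residue is $\tfrac1{N_j}R$ with
\[
R:=\chi(E_j^\circ)+\sum_{C\subset D_j}\frac{\chi(C^\circ)}{\alpha_C}+\sum_{P\in C\cap C'}\frac{1}{\alpha_C\,\alpha_{C'}},
\]
so it suffices to prove $R=0$.

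Next I would record the relations among numerical data. Restricting the identities $\sum_{k\in S}N_kE_k=\div(f\circ h)\sim0$ and $\sum_{k\in S}(\nu_k-1)E_k=K_Y$ from the proof of Lemma~\ref{relationsbetweennumericaldata} to the surface $E_j$, and using that $E_k|_{E_j}$ is the reduced curve $E_j\cap E_k$ for $k\neq j$, one gets in $\Pic(E_j)$ that $N_j\,(E_j|_{E_j})\sim-\sum_{C\subset D_j}N_{i(C)}C$, and hence, by adjunction $K_{E_j}=(K_Y+E_j)|_{E_j}$, in $\Pic(E_j)\otimes\Q$ that
\[
K_{E_j}\sim\sum_{C\subset D_j}(\alpha_C-1)\,C .
\]
These are the dimension-three instances of the relations of \cite{Veys Relations}, the latter being the surface analogue of Lemma~\ref{relationsbetweennumericaldata}(3); intersecting it with a general fibre $F$ of a ruling yields the $n=2$ identity $\sum_C(\alpha_C-1)\deg(C/\P^1)=-2$, the sum over the components of $D_j$ dominating $\P^1$. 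Unlike for $n=2$, this relation alone does not force $R=0$: one genuinely needs the hypotheses $\chi(E_j^\circ)\leq0$ and rationality of $E_j$, exactly as for $n=2$ Proposition~\ref{non contribution} needed $\chi(E_0^\circ)\geq0$ (i.e.\ $r\leq2$) on top of Lemma~\ref{relationsbetweennumericaldata}(3).

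Now I would invoke the structure theorem. The surface $X:=E_j$ is nonsingular, projective and rational, $\chi(X\setminus D_j)=\chi(E_j^\circ)\leq0$, and $D_j$ is connected --- automatically in case (1) (any two curves on $\P^2\cong E_j^*$ meet, and blowing up preserves connectedness of the total transform), and by hypothesis in case (2). After disposing of the finitely many $(-1)$-curves of $E_j$ disjoint from $D_j$ (these are not components of $h^{-1}\{f=0\}$, carry no $\alpha$, and can be absorbed into the morphism $g$ below so that $R$ is unchanged), Theorem~\ref{wafels} gives a dominant morphism $E_j^\circ\to\P^1$, a modification $\tilde X\overset{g}{\to}\Sigma\overset{\pi}{\to}\P^1$ with $\pi$ ruled, and one of the normal forms (a), (b), (c) for $g(h^{-1}D_j')$; in case (1), Theorem~\ref{wafels}(3) excludes (c) since $D_j$ descends from a plane curve. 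I would then stratify $E_j$ by the fibres of $\pi$ (pulled back through $g$ and $h$), use additivity and multiplicativity of $\chi$, and split $R$ into a generic-fibre part plus finitely many special-fibre corrections. Over a general fibre $F\cong\P^1$, the components of $D_j$ meeting $F$ are the \lq horizontal\rq\ ones prescribed by the normal form (a section $C_1$, or $C_1$ and $C_2$, or the bisection $C$), so $F$ meets $D_j$ transversally in one or two points, and the relation of the previous step restricted to $F$ is precisely the $r\leq2$ case of Lemma~\ref{relationsbetweennumericaldata}(3) with \lq $E_0=F$\rq: as in Proposition~\ref{non contribution}, it forces the generic-fibre part of $R$ to vanish. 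It then remains to show the finitely many special-fibre corrections also vanish; here the normal forms (a)--(c), which already encode $\chi(E_j^\circ)\leq0$, reduce each to a \lq chain\rq\ computation of the type carried out for $n=2$ in the proof of the monodromy conjecture for curves, once more settled by the relations above. Summing, $R=0$; cases (1) and (2) differ only in the admissible ruled models, hence proceed by the same argument.

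The hard part will be this last step, in particular the weight bookkeeping on the model $\Sigma$: one must track precisely which curves on $\Sigma$ genuinely descend from components of $h^{-1}\{f=0\}$ --- and so enter $R$ with weight $1/\alpha_C$ --- versus the auxiliary curves introduced in enlarging $D_j$ to $D_j'$ and in the modifications $h$, $g$ (to be weighted $0$, or absorbed into $E_j^\circ$), all while keeping the telescoping governed by $K_{E_j}\sim\sum(\alpha_C-1)C$ rather than by the naive weights of an abstract tree of $\P^1$'s. Subsidiary points: reconciling the dominant components of $D_j$ with the horizontal curves of the ruled model (some of the former may be contracted by $g$); handling the genuinely disconnected configurations that occur for a ruled $E_j^*$ with no fibre among the intersecting components --- which is why the connectedness hypothesis in (2) cannot be dropped; removing the $(-1)$-curves disjoint from $D_j$ without corrupting $R$; and checking that configuration (c) is indeed excluded in case (1).
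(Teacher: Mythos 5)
Your proposal identifies the right ingredients: the reduction to vanishing of the residue expression $R$; the relation $K_{E_j}\sim\sum_C(\alpha_C-1)\,C$ in $\Pic(E_j)\otimes\Q$, which is indeed the correct $n=3$ analogue of Lemma~\ref{relationsbetweennumericaldata}(3) from \cite{Veys Relations}; and Theorem~\ref{wafels} as the structural input. But your mechanism for killing $R$ diverges from the paper's. As illustrated in the example following the theorem, the paper attaches to any pair (smooth projective surface $A$, normal crossings curve with weights $\alpha_i$ satisfying $K_A=\sum_i(\alpha_i-1)C_i$) the residue-type expression $\mathcal{R}_A$, observes that $\mathcal{R}$ is \emph{invariant under the blow-downs in $g$} thanks to the \lq blow-down relations\rq\ $\alpha_{\text{exc}}=\sum\alpha_i-1$ forced by the canonical-class identity, and thus transports $\mathcal{R}_{E_j}$ along $E_j \leftarrow \tilde X \to\Sigma$ to $\mathcal{R}_\Sigma$; there the normal forms (a)--(c) plus adjunction on a generic fibre (e.g.\ two sections give $\alpha_{C_1}+\alpha_{C_2}=0$) make $\mathcal{R}_\Sigma$ factor and vanish. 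You instead stratify $E_j$ itself by fibres of $\pi$ pulled back along $g$ and $h$; those pulled-back fibres are not $\P^1$'s in general (degenerate fibres are chains), and you never carry out the resulting \lq special-fibre corrections\rq\ --- which you yourself flag as the hard step. Without the invariance-under-blow-down principle there is no telescoping to make that bookkeeping close, so the argument as written stops short of a proof.

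There is also a concrete error in the preliminary reduction. Contracting a $(-1)$-curve $\Gamma\subset E_j$ disjoint from $D_j$ replaces $\Gamma\cong\P^1$ inside $E_j^\circ$ by a point, so $\chi(E_j^\circ)$ drops by $1$ and hence the constant term of $R$ changes by $-1$; it is therefore \emph{not} true that such a $\Gamma$ can be \lq absorbed into $g$ so that $R$ is unchanged\rq. Moreover $g$ in Theorem~\ref{wafels}(1) only contracts curves lying in $h^{-1}D'$, so $\Gamma$ (which carries no $\alpha$ and is outside $D_j$, and a fortiori not required to lie in $h^{-1}D'$) cannot simply be declared part of $g$. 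You need either to rule out the existence of such $\Gamma$ in the resolution setup --- note that all exceptional curves of $E_j\to E_j^*$ are components of $D_j$, so $\Gamma$ could only be the strict transform of a $(-1)$-curve on $E_j^*$ disjoint from $C$ and from all subsequent centres, a possibility that must be addressed --- or to correct $R$ explicitly for the discrepancy before invoking Theorem~\ref{wafels}.
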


We illustrate with the above example how Theorem \ref{wafels} can be used to show that the contribution of $E_j$ to the residue of the candidate pole $-\frac{\nu_j}{N_j}$ is zero.

\begin{example} (continuing Example \ref{wafelvoorbeeld})
Let  $E_j$ and its intersection with other components be as in Example \ref{wafelvoorbeeld}, that is, $X=E_j$ is created as  $E_j^*\cong\P^2$, the intersections with other components at that stage of the resolution process are $C_0$ and $C'_0$, and the intersections of $E_j$ with other components in the final embedded resolution are $C_0$, $C'_0$, $C_1$, $C_2$ and  $C_3$, whose union is the curve $D$.
Say those five curves are more precisely the intersections of $E_j$ with the surfaces $E_0$, $E'_0$, $E_1$, $E_2$ and  $E_3$, respectively. Denote $\alpha_0:= \nu_0 -\frac{\nu_j}{N_j}N_0, \dots, \alpha_3:=\nu_3 -\frac{\nu_j}{N_j}N_3$.

The contribution of $E_j$ to the residue of  $-\frac{\nu_j}{N_j}$ is the following expression $\mathcal{R}_{E_j}$ in terms of Euler characteristics and these $\alpha$'s (compare with the residue expression  (\ref{residue}) in the curve case; here we  have 10 terms, coming from the natural stratification of $X=E_j$ induced by the normal crossings curve $D$):
$$
\mathcal{R}_{E_j} =
0+\frac1{\alpha_0}+\frac1{\alpha'_0}+\frac1{\alpha_1}+\frac0{\alpha_2}+\frac{-1}{\alpha_3}
+\frac1{\alpha'_0 \alpha_2}+\frac1{\alpha_2 \alpha_3}+\frac1{\alpha_1 \alpha_3}+\frac1{\alpha_0 \alpha_3} .
$$
From \cite{Veys Relations}, we have the basic relation $3\alpha_0 + \alpha'_0 -1=0$, associated to the original configuration on $E_j^*\cong\P^2$, and the additional relations
$$\alpha_1=2\alpha_0 + \alpha'_0 -1, \quad \alpha_2=\alpha_0 + \alpha'_0 +\alpha_1-1 \quad\text{and}\quad \alpha_3=\alpha_0 + \alpha_1+\alpha_2 -1,$$
 associated to each blow-up leading to $E_j$.

In a concrete case, as here,  plugging in such relations in the expression $\mathcal{R}_{E_j}$ would yield zero when expected.  But this does not allow proving a general theorem.
We illustrate further the strategy that does allow it.  One can associate a similar expression to any normal crossings curve $\cup_i C_i$ on a smooth projective surface $A$, where each $C_i$ is equipped with a nonzero rational number $\alpha_i$, satisfying $K_A=\sum_i(\alpha_i-1)C_i$. For instance in the example we have such an expression  $\mathcal{R}$ for the intermediate surface obtained after contracting $C'_0$ in $E_j$ and an expression $\mathcal{R}_{\Sigma}$ for $\Sigma$.

Now the relations between the $\alpha$'s above yield in particular that
$$\alpha'_0=\alpha_2 +1  \qquad\text{and} \qquad \alpha_2=\alpha_3+1,$$
and it is easy to verify that such \lq blow down relations\rq\ are exactly what is needed to obtain that $\mathcal{R}_{E_j} =\mathcal{R} =\mathcal{R}_{\Sigma}$.
The expression $\mathcal{R}_{\Sigma}$ is
$$
0 + \frac1{\alpha_0}+ \frac1{\alpha_1}+ \frac0{\alpha_3} + \frac1{\alpha_0 \alpha_3}+\frac1{\alpha_1 \alpha_3}=\Big(\frac1{\alpha_0}+ \frac1{\alpha_1}\Big)\Big(1+\frac1{\alpha_3}\Big),
$$
and this is zero because
$$\alpha_0 + \alpha_1=0.$$
This last relation also follows from the relations between the $\alpha$'s above.  An important point in general is that we already knew that such a relation must hold. It follows from the fact that $K_\Sigma=(\alpha_0-1)C_0 + (\alpha_1-1)C_1 + (\alpha_3-1)C_3$ and the adjunction formula, applied to a generic fibre of $\Sigma$.

\end{example}

\subsection{Special polynomials}\label{special polynomials}

Here we present an overview of the many special families of polynomials, for which the monodromy conjecture was proven, mostly for the standard version, sometimes for the more difficult stronger version featuring the b-function.  Early results concern only the $p$-adic zeta function, but are mostly also valid in the context of  the motivic and (a fortiori) the topological zeta function.
Later results are often stated in the  setting of the motivic or topological zeta function.

In most cases, proofs in one setting (topological, $p$-adic, motivic) imply or can be upgraded to proofs in the other settings.  But in some particular cases a proof in the $p$-adic or motivic setting is really more difficult than for the topological zeta function (typically when complicated combinatorial formulas are involved).

\smallskip

Everything started with Igusa's calculations of the $p$-adic zeta function for polynomials $f$ that are relative invariants of prehomogeneous vector spaces\index{prehomogeneous vector space}.  (We refer to e.g. \cite{SS} for these notions. Igusa uses the symmetry of the associated group; resolution of singularities does not appear here.)   This study can be reduced to 29 types; for most of them  Igusa derived in several papers explicit expressions for the zeta function, showing that the real parts of its poles are roots of $b_f$. This is summarized in  \cite{Ig}.  Kimura, Sato and  Zhu treated all types using microlocal analysis   \cite{KSZ}.

\smallskip
For surface singularities ($n=3$), the standard version is proved for several families of polynomials. Rodrigues and Veys \cite{RV} showed it for \lq almost all\rq\ homogeneous polynomials\index{homogeneous polynomial}. Artal, Cassou-Nogu\`es, Melle and Luengo proved it for all  superisolated singularities\index{superisolated singularity}, and also for the remaining case of homogeneous polynomials  \cite{ACLM1}.  Polynomials that are general with respect to a threedimensional toric idealistic cluster\index{toric idealistic cluster} were handled by Lemahieu and Veys \cite{LV}.

As a side remark, we mention the exploration of a possible generalization of the monodromy conjecture to functions on normal surface singularities \cite{Ro}\cite{RV2}.

\smallskip
Polynomials that are  nondegenerate with respect to their local or global Newton polyhedron\index{Newton nondegenerate polynomial}\index{nondegenerate polynomial} attracted considerable attention. All proofs use (mutually related) combinatorial formulas for the respective zeta functions: $p$-adic  \cite[Theorem 4.2]{DenefHoornaert},  topological    \cite[Th\'eor\`eme 5.3]{DenefLoeser1}, and motivic \cite[Theorem 10.5]{BV}.
(Another such formula, in the context of the generalizations in the next section, is derived in  \cite{Bo1}.)

Loeser showed an instance of the strong version of the conjecture for arbitrary $n$, assuming however several technical conditions \cite{Lo3}. There are many contributions to the standard version.
For $n=3$, it was shown unconditionally by Lemahieu and Van Proeyen  \cite{LVP} for the topological zeta function, and by Bories  and Veys for the $p$-adic and motivic zeta function  \cite{BV}. (This is a case where proofs in the $p$-adic and motivic setting are considerably more difficult.)
More recently Esterov, Lemahieu and Takeuchi  \cite{ELT} introduced new arguments for both existence of eigenvalues and cancellation
of candidate  poles for the topological zeta function, especially for $n = 4$, establishing in particular the conjecture unconditionally for $n=4$.

In arbitrary dimension,  Larson,  Payne and  Stapledon proved the conjecture for the  motivic zeta function, assuming the nondegenerate polynomials have {\em simplicial} Newton polyhedra \cite{LPS}.
Also in arbitrary dimension, Quek showed  cancellation of candidate poles for the motivic zeta function, by constructing a stack-theoretic embedded resolution  \cite{Q}. This yields in particular a new geometric proof of the main result of \cite{BV}.

\smallskip
Finally, we list families of polynomials in an arbitrary number of variables $n$, for which the conjecture is proved.

Artal, Cassou-Nogu\`es, Melle and Luengo showed the standard version for quasi-ordinary polynomials\index{quasi-ordinary polynomial} on the motivic level \cite{ACLM2}.

For  hyperplane arrangements\index{hyperplane arrangement}, Budur, Musta\c t\v a and Teitler showed the standard version for the topological zeta function  \cite{BMT}. In that paper, they also reduce the stronger version to showing that, for an indecomposable essential central hyperplane arrangement of degree d, we have that $-n/d$ is a root of its b-function. The latter is shown in some cases by Budur, Saito and Yuzvinski in \cite{BSYV}, featuring also an appendix by Veys containing unexpected examples where that value $-n/d$ is {\em not} a pole of the topological zeta function.

Blanco, Budur and van der Veer proved the strong version for semi-quasi\-homo\-geneous polynomials\index{semi-quasihomogeneous polynomial} with an {\em isolated singularity} at the origin  \cite{BBvdV}.

Budur and van der Veer showed the standard and strong version for polynomials $fg$, where $g$ is a  {\em log generic}\index{log generic polynomial} and {\em log very-generic}\index{log very-generic polynomial} polynomial with respect to $f$, respectively
\cite{BvdV}.

\section{Generalizations}
\label{sec:5}

For ease of presentation, and in order to focus on the \lq essentials\rq, we exposed everything in the most basic setting.  In this final section we present the original formulation of the conjecture, in the context of more general $p$-adic fields and also involving a multiplicative character.  We also mention subsequent generalizations: mappings/ideals instead of only one polynomial, zeta functions involving a non-standard differential form, and zeta functions in several variables $s_\ell$.

One can combine several generalizations simultaneously, but here we present each generalizing aspect separately in order to focus on that aspect.

\subsection{Original formulation}

Let $K$ be an arbitrary $p$-adic field\index{$p$-adic field}, that is, a finite extension of $\Q_p$. Let $\mathcal{O}_K$ be its valuation ring with maximal ideal $m_K$, generated by a uniformizing parameter $\pi$, and residue field $\mathcal{O}_K/m_K$ of cardinality $q$ (which is a power of $p$).

Any $z\in K^\times$ can be written uniquely in the form $z=\pi^{\ord_K(z)} \cdot \ac(z)$, where $\ord_K(z)\in \Z$ is the {\em $\pi$-order} of $z$ and $\ac(z)\in \mathcal{O}_K^\times$ is the {\em angular component} of $z$.
The standard {\em norm} of $z$ is $|z|_K:= q^{-\ord_K(z)}$.

\begin{definition}
Let $K$ be a $p$-adic field as above, $f \in K [x_1, \dots, x_n] \setminus K$ and $\varphi$ a  locally constant function $K^n\to \C$ with compact support. Let furthermore $\varkappa:\mathcal{O}_K^\times \to \C^\times$ be a (multiplicative) character\index{character}\index{multiplicative character}, that is, a group homomorphism with finite image. To these data one associates the {\em $p$-adic Igusa zeta function}\index{$p$-adic zeta function}\index{Igusa zeta function}\index{$p$-adic Igusa zeta function}
$$
Z_K(f,\varkappa,\varphi;s) := \int_{K^n} |f(x)|_K^s \varkappa\big(\ac f(x)\big) \varphi(x) dx ,
$$
where $dx$ denotes the standard Haar measure on $K^n$ (such that $\mathcal{O}_K^n$ has measure $1$), and $s\in \C$ with $\Re(s)>0$.
\end{definition}

\begin{remark}
This zeta function can be defined more conceptually without choosing a uniformizing parameter and the associated angular component. A {\em quasi-character}\index{quasi-character} of $K^\times$ is a continuous homomorphism $\omega:K^\times \to \C^\times$ (extended with $\omega(0)=0$), to which one associates the zeta function $Z_K(f,\varphi,\omega):= \int_{K^n} \omega(f(x)) \varphi(x) dx$.
Choosing a uniformizing parameter $\pi$ and  $s \in \C$ satisfying $\omega(\pi) = q^{-s}$, we have that $\omega (z) =  |z|_K^s \varkappa(\ac z)$,  where $\varkappa := \omega|_{\mathcal{O}_K^\times}$.
 Then $Z_K(f,\varphi,\omega) = Z_K(f,\varkappa,\varphi;s)$.
\end{remark}

Generalizing Theorem \ref{zetafunction is rational}, $Z_K(f,\varkappa,\varphi;s)$ is a rational function in $q^{-s}$, and thus has a meromorphic continuation to $\C$.
Also Denef's formula\index{Denef's formula} (Theorem \ref{denefformula}) generalizes, see \cite[Theorem 2.2]{Denef2}.  Here we just mention the adapted list of candidate poles.

\begin{proposition}\label{poles with character}
 Let $h:Y\rightarrow \A_K^n$ be an embedded resolution of $\{f=0\}$, for which we use  notation from Note \ref{notation embedded resolution}. Then, denoting by $d$ the order of the character $\varkappa$, the poles of $Z_K(f,\varkappa,\varphi;s)$  are of the form
$$
 -\frac{\nu_j}{N_j} + \frac{2\pi k}{(\ln p)N_j}i, \text{ with } \ j\in S \text{ such that } d \mid N_j \text{, and } k\in \Z \, .
$$
\end{proposition}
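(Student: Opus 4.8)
The plan is to mimic the first proof of Theorem~\ref{zetafunction is rational} — the one via an embedded resolution — while dragging the character $\varkappa$ through the calculation; the only genuinely new ingredient will be orthogonality of characters of $\mathcal{O}_K^\times$. First I would use $h$ as a change of variables to rewrite
\[
Z_K(f,\varkappa,\varphi;s)=\int_Y |(f\circ h)(y)|_K^{s}\,\varkappa\!\big(\ac(f\circ h)(y)\big)\,|\Jac_h(y)|_K\,(\varphi\circ h)(y)\,dy .
\]
Since $h$ is proper, $\varphi\circ h$ has compact support, so the integration domain is covered by finitely many disjoint balls, which I would shrink until on each of them there are local coordinates $y=(y_1,\dots,y_n)$ with $f\circ h=u\prod_{i\in S_P}y_i^{N_i}$ and $\Jac_h=v\prod_{i\in S_P}y_i^{\nu_i-1}$ (with $u,v$ units), until $u$, $v$ and $\varphi\circ h$ are \emph{constant} on the ball, and until the ball has the product shape $\prod_{i\in S_P}\pi^{e_i}\mathcal{O}_K\times\prod_{i\notin S_P}(a_i+\pi^{e_i}\mathcal{O}_K)$ with the $a_i$ units.

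On such a ball $\ac(f\circ h)=\ac(u)\prod_{i\in S_P}\ac(y_i)^{N_i}$, so multiplicativity of $\varkappa$ separates the integrand and the ball's contribution becomes a constant multiple of $\prod_{i\in S_P}\int_{\pi^{e_i}\mathcal{O}_K}|y_i|_K^{N_i s+\nu_i-1}\varkappa(\ac y_i)^{N_i}\,dy_i$, the coordinates $i\notin S_P$ contributing mere volumes. The heart of the matter is the one-variable integral: splitting $\pi^{e}\mathcal{O}_K=\bigsqcup_{k\ge e}\pi^{k}\mathcal{O}_K^\times$ and substituting $y=\pi^{k}u$ with $u\in\mathcal{O}_K^\times$ turns it into $\sum_{k\ge e}q^{-k(Ns+\nu)}\int_{\mathcal{O}_K^\times}\varkappa(u)^{N}\,du$. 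By orthogonality of characters, $\int_{\mathcal{O}_K^\times}\varkappa^{N}\,du$ vanishes when $\varkappa^{N}$ is nontrivial, i.e.\ exactly when $d\nmid N$, and equals $1-q^{-1}$ when $d\mid N$, in which case the geometric series sums to $(1-q^{-1})q^{-e(Ns+\nu)}/(1-q^{-(Ns+\nu)})$. Reassembling the finitely many charts then presents $Z_K(f,\varkappa,\varphi;s)$ in a Denef-type shape — a finite sum of monomials in $q^{-s}$ times products $\prod_{i\in J}(q-1)/(q^{N_i s+\nu_i}-1)$ over index sets $J$ with $d\mid N_i$ for every $i\in J$ — hence rational in $q^{-s}$, with poles contained in $\bigcup_{j\in S,\,d\mid N_j}\{\,s:q^{N_j s+\nu_j}=1\,\}$; in terms of $s$ this is the asserted list of candidate poles (with the residue field cardinality $q$ in the role of $p$).

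The step I expect to be the real obstacle is not any single computation but the bookkeeping that makes the factor $\int_{\mathcal{O}_K^\times}\varkappa^{N_i}\,du$ appear cleanly separated: one must shrink the resolution charts enough that $u$, $v$ and $\varphi\circ h$ become constant while the coordinates cutting out the components $E_i$ still sweep out full balls $\pi^{e_i}\mathcal{O}_K$, so that $\ac(f\circ h)$ really does factor as $\ac(u)\prod\ac(y_i)^{N_i}$ and $\varkappa$ distributes over it. Alternatively, one can bypass this by quoting the character-version of Denef's formula~\cite[Theorem~2.2]{Denef2}: there the local factor $\frac{q-1}{q^{\nu_i+N_i s}-1}$ attached to $E_i$ is simply replaced by $0$ whenever $d\nmid N_i$, and reading off the surviving denominators yields the proposition immediately.
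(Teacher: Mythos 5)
Your proof is correct, and it essentially reconstructs the argument behind the reference the paper points to: the paper itself gives no proof of this proposition, merely invoking the character version of Denef's formula (citing \cite[Theorem 2.2]{Denef2}) and reading off the surviving denominators. Your decomposition via the resolution $h$, the reduction to monomial integrals on small disjoint balls, and the use of orthogonality of the character $\varkappa^{N_i}$ on $\mathcal{O}_K^\times$ to annihilate every factor with $d\nmid N_i$ is exactly the mechanism in Denef's proof; so you are not really on a different route, you are filling in the route the paper chose to cite.

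Two small remarks on the write-up. First, you say one shrinks the chart until $u$ and $v$ are \emph{constant}; a nonconstant unit cannot be literally constant on an open ball. What your argument actually needs (and tacitly uses) is only that $\varkappa(u)$ is constant, equivalently that $u$ stays in a single coset of $\ker\varkappa$, which is an open finite-index subgroup of $\mathcal{O}_K^\times$ because $\varkappa$ has finite image — and that is achievable on a small ball by continuity of $u$; similarly only $|v|_K$ need be constant, and $\varphi\circ h$ is genuinely locally constant. You flag the bookkeeping as the delicate point, which is fair, but the cure is the one just described rather than literal constancy. Second, since $Z_K(f,\varkappa,\varphi;s)$ is rational in $q^{-s}$ (with $q$ the residue cardinality, not $p$), the imaginary period of a pole coming from $E_j$ is $2\pi/(N_j\ln q)$; the paper's displayed formula with $\ln p$ is a slip (harmless only when $K=\Q_p$), and your parenthetical ``with $q$ in the role of $p$'' is the correct reading.
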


The original formulation of the monodromy conjecture is in this setting.

\begin{conjecture}[Monodromy conjecture]\index{monodromy conjecture}
Let $F$ be an arbitrary number field and $f\in F[x_1,\dots,x_n]\setminus F$.  Then, for all but a finite number of (non-archimedean) completions $K$ of $F$, all characters $\varkappa$ of $\mathcal{O}_K^\times$ and all test functions $\varphi$ on $K^n$,
 if $s_0$ is a pole of $Z_K(f,\varkappa, \varphi;s)$, then $e^{2\pi i \Re(s_0)}$ is a monodromy eigenvalue of $f:\C^n\to \C$ at some point of $\{f=0\}$.
\end{conjecture}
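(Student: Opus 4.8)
The plan is to follow the template that underlies every case of the conjecture known so far: trade the arithmetic side for geometry by means of an embedded resolution, read off the complete list of candidate poles, and then establish a dichotomy --- every candidate that is an actual pole is forced to induce a monodromy eigenvalue, while every candidate whose occurrence would contradict the conjecture cancels.

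First I would fix the number field $F$, the polynomial $f\in F[x_1,\dots,x_n]\setminus F$, and an embedded resolution $h:Y\to\A_F^n$ of $\{f=0\}$ defined over $F$, with numerical data $(N_j,\nu_j)$, $j\in S=S_e\sqcup S_s$, as in Note \ref{notation embedded resolution}. After base change to a completion $K$ of $F$ with residue field of size $q$, the character version of Denef's formula (\cite[Theorem 2.2]{Denef2}, generalizing Theorem \ref{denefformula}) holds for all but finitely many $K$ and writes $Z_K(f,\varkappa,\varphi;s)$ as a finite $\C$-linear combination of the products $\prod_{i\in I}\frac{q-1}{q^{\nu_i+N_is}-1}$ over those $I\subset S$ with $d\mid N_i$ for all $i\in I$ (where $d$ is the order of $\varkappa$), with coefficients given by counts of $\F_q$-points on the strata $\overline{E_I}^{\circ}$. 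Hence, as in Proposition \ref{poles with character}, every pole $s_0$ satisfies $\Re(s_0)=-\nu_j/N_j$ for some $j\in S$ with $d\mid N_j$; and by passing to the local zeta functions it suffices to prove, for each such $j$, that if $-\nu_j/N_j$ is a genuine pole of the (local) zeta function then $e^{-2\pi i\nu_j/N_j}$ is a monodromy eigenvalue of $f$ at some point $a$ near the image of $E_j$.

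Second, I would remove the character. If $d\mid N_j$ for a strict-transform component $E_j\in S_s$, then by Proposition \ref{monodromy properties}(3) the number $e^{-2\pi i\nu_j/N_j}$ is already an eigenvalue of $f$ along $E_j$, so one may assume $s_0=-\nu_j/N_j$ with $j\in S_e$; the effect of a nontrivial $\varkappa$ of order $d$ is then only to restrict attention, on the geometric side, to the exceptional components with $N_j$ divisible by $d$. Invoking A'Campo's formula (Theorem \ref{A'Campo}), which expresses the monodromy zeta functions purely in terms of the $N_j$ and the local Euler characteristics $\chi(E_k^{\circ}\cap h^{-1}\{a\})$, reduces the whole problem to a statement about these data alone: writing $e$ for the denominator of $\nu_j/N_j$ in lowest terms, one needs that, whenever $-\nu_j/N_j$ is an actual pole, $\sum_{k\in S_e,\ e\mid N_k}\chi(E_k^{\circ}\cap h^{-1}\{a\})\neq 0$ for a suitable $a$ (with the correct sign), and, conversely, that when that sum vanishes the candidate $-\nu_j/N_j$ cancels out of Denef's formula.

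The decisive --- and genuinely hard --- step is this cancellation of the ``false'' candidate poles; it is, after all, the phenomenon that makes the conjecture true at all. For $n=2$ it is exactly Proposition \ref{non contribution}: the residue of an exceptional curve meeting other components once or twice vanishes, and this rests on the two structural inputs of the surface case, namely the linear relations between numerical data obtained by intersecting with $E_0$ in $\Pic(Y)$ (Lemma \ref{relationsbetweennumericaldata}) together with the inequalities $-1\le\alpha_i<1$ valid on the \emph{minimal} resolution (Lemma \ref{inequalitiesbetweennumericaldata}). For $n=3$ the analogue needs the structure theory of \S\ref{sec:surfaces} --- rationality of the components of $E_j$ when $\chi(E_j^{\circ})\le 0$ (Theorem \ref{rational}) and the fibration/ruled-surface description of such $(E_j,\cup_{i}E_i\cap E_j)$ (Theorem \ref{wafels}), combined with the higher relations between numerical data --- and even then the argument is only ``almost'' complete. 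In dimension $n\ge 4$ there is no such structure theory and, more fundamentally, no known conceptual mechanism that forces the cancellation or links the poles of $Z_K$ directly to $b_f$; this is precisely why the conjecture in the stated generality remains open. A realistic plan must therefore either (i) restrict to a class of $f$ for which the resolution admits a combinatorial model --- nondegenerate, quasi-ordinary, hyperplane-arrangement, superisolated, $\dots$, as surveyed in \S\ref{special polynomials} --- where the two reductions above become finite combinatorial verifications, or (ii) find a new, resolution-independent argument (in the spirit of $\mathcal D$-module theory or motivic integration) that supplies the missing conceptual reason, which would be the breakthrough the subject still lacks.
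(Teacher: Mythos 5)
You are correct that this statement is a \emph{conjecture}, not a theorem: the paper presents it as the (original $p$-adic, character-enriched) monodromy conjecture and does not prove it. There is therefore no paper proof to compare against, and your honest conclusion --- that the sketch does not amount to a proof and the general case remains open --- is the right one. Your outline accurately reproduces the template behind all the known partial results in the paper: use the character version of Denef's formula to list the candidate poles $-\nu_j/N_j$ with $d\mid N_j$, peel off the strict-transform components via Proposition~\ref{monodromy properties}(3), reduce the exceptional-component contributions to an Euler-characteristic statement via A'Campo's formula, and then show that any candidate which would violate the conjecture has vanishing residue. This is exactly what the paper carries out for $n=2$ (Lemmas~\ref{relationsbetweennumericaldata} and~\ref{inequalitiesbetweennumericaldata}, Proposition~\ref{non contribution}, Theorem~\ref{determination poles}), and you also correctly describe the partial extension to $n=3$ via the structure theory of \S\ref{sec:surfaces} and the special families of \S\ref{special polynomials}. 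You have likewise put your finger on the genuine gap: there is no known conceptual mechanism forcing the cancellation of false candidate poles in dimension $n\geq 4$, and a different (\(\mathcal{D}\)-module-theoretic, motivic, or other intrinsic) argument would be needed.

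One small imprecision worth flagging: after restricting by the order $d$ of the character you switch to the reduced denominator $e$ of $\nu_j/N_j$ when stating the Euler-characteristic condition. These play different roles. The condition $d\mid N_j$ selects which components survive in the character version of Denef's formula (Proposition~\ref{poles with character}); the condition ``$e\mid N_k$'' is what enters A'Campo's formula when testing whether $e^{-2\pi i\nu_j/N_j}$ is an eigenvalue. Keeping the two indices separate is important, and the subtle interplay between them is also what underlies the closely related \emph{holomorphy conjecture} mentioned at the end of \S5.1. In addition, ``the sum $\sum\chi$ is nonzero'' is necessary but not sufficient for the eigenvalue to appear (it could be cancelled between numerator and denominator of the monodromy zeta function), which is why the curve proof has to establish the sharper sign statement $\sum_{E_j\subset\mathcal{C}_\ell}\chi(E_j^\circ)<0$ for a suitable connected component $\mathcal{C}_\ell$. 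None of this changes your overall, correct, assessment.
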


There is an analogous local version and a stronger version with the b-function $b_f$.  The general form of the topological zeta function is as follows, where the number $d$ arises as the order of the character $\varkappa$ in the \lq limit procedure\rq\ of \S\ref{subsec:topological zeta function}.  

\begin{definition}[\cite{DenefLoeser1}]
Let $f\in \C[x_1,\dots,x_n]\setminus \C$ and $d \in \Z_{\geq 1}$. Let $h:Y\rightarrow \A_\C^n$ be an embedded resolution of $\{f=0\}$, for which we use  notation from Note \ref{notation embedded resolution}.
The (global) {\em topological zeta function\index{topological zeta function} associated to $f$ and $d$} is
$$
Z_{\top}(f,d;s) :=	\sum_{I\subset S, \forall i\in I: d|N_i}\chi(E_I^\circ)\prod_{i\in I}\frac{1}{\nu_i+ N_is}.
$$
	\end{definition}

Also for this variant one can generalize the monodromy conjecture\index{monodromy conjecture} in the obvious way.
For the motivic generalization, and the formulation of the monodromy conjecture in that setting, we refer to \cite[2.4]{DenefLoeser2}. Note that, with respect to this more general motivic zeta function\index{motivic zeta function}, the \lq basic\rq\ motivic zeta function from \S\ref{motivic} is sometimes called the {\em naive motivic zeta function}.

For an upgrade of the proof for $n=2$ to this setting, see \cite[\S8]{BN}.

\medskip
According to Proposition \ref{poles with character},  when $d$ divides no $N_j$ at all, then $Z_K(f,\varkappa,\varphi;s)$ is holomorphic on $\C$. These $N_j$ are not intrinsically associated to $f$, but the order (as root of unity) of any
monodromy eigenvalue of $f$ divides some $N_j$, and those eigenvalues are intrinsic invariants of $f$.  This observation inspired Denef  to propose the following  \cite{Denef3}.

\begin{conjecture}[Holomorphy conjecture]\index{holomorphy conjecture}
Let $F$ be an arbitrary number field and $f\in F[x_1,\dots,x_n]\setminus F$.  Then, for all but a finite number of (non-archimedean) completions $K$ of $F$, we have the following. If the order of a character $\varkappa$ of $\mathcal{O}_K^\times$ does not divide the order of any monodromy eigenvalue of $f$ at any complex point of $\{f=0\}$, then $Z_K(f,\varkappa,\varphi;s)$ is holomorphic on $\C$.
\end{conjecture}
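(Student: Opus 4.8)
The plan is to read the holomorphy conjecture as the contrapositive of a character-refined monodromy conjecture (``every pole of $Z_K(f,\varkappa,\varphi;s)$ induces a monodromy eigenvalue whose order is divisible by $d:=\ord\varkappa$'') and to attack that refined statement by the component-by-component, residue-vanishing strategy used for curves and surfaces, now carried out with the $\Z/d$-covers that the character produces.

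First I would fix an embedded resolution $h:Y\to\A_F^n$ of $\{f=0\}$ defined over $F$, with notation as in Note~\ref{notation embedded resolution}; for all but finitely many completions $K$, $h$ has good reduction and the character-twisted Denef formula (\cite[Theorem 2.2]{Denef2}) applies, writing $Z_K(f,\varkappa,\varphi;s)$ as a finite sum, over those $I\subset S$ with $d\mid N_i$ for all $i\in I$, of terms
$$
c_I(\varkappa)\prod_{i\in I}\frac{q-1}{q^{\nu_i+N_is}-1},
$$
where $c_I(\varkappa)$ is a character sum over $\overline{E_I}^\circ(\F_q)$ weighting each point by $\varkappa$ applied to the angular component of the unit part of $f\circ h$ there; equivalently $c_I(\varkappa)$ is a twisted point count on the degree-$d$ cyclic cover $\widetilde{E_I^\circ}\to E_I^\circ$ obtained by extracting a $d$-th root of that unit. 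By Proposition~\ref{poles with character} the only possible poles lie on the lines $\Re(s)=-\nu_j/N_j$ with $d\mid N_j$, so it suffices to fix such a $j$ and show its residue vanishes whenever no monodromy eigenvalue of $f$ has order divisible by $d$. That residue is, exactly as in the curve computation leading to (\ref{residue}), an alternating sum over the strata $E_I^\circ$ containing $E_j$ of the $c_I(\varkappa)$ divided by products of the linear forms $\alpha_i:=\nu_i-\tfrac{\nu_j}{N_j}N_i$.

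Next I would make the link with monodromy explicit by running the $q\to 1$ heuristic of \S\ref{subsec:topological zeta function} with character, made rigorous via Grothendieck's trace formula applied to the covers $\widetilde{E_I^\circ}$: this replaces each $c_I(\varkappa)$ by the $\varkappa$-isotypic part of the Euler characteristic of $\widetilde{E_I^\circ}$, and the residue by an expression in such twisted Euler characteristics and the $\alpha_i$. The decisive point is that this is the precise analogue, for eigenvalues of order divisible by $d$, of A'Campo's formula (Theorem~\ref{A'Campo}): if, near every point of $h(E_j)$, $f$ has no monodromy eigenvalue of order divisible by $d$, then by the proposition following Theorem~\ref{A'Campo} the covers $\widetilde{E_I^\circ}$ are cohomologically trivial in the $\varkappa$-direction, which forces the Euler-characteristic identities needed to kill the residue. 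Feeding in the relations between numerical data (Lemma~\ref{relationsbetweennumericaldata} for $n=2$, and their higher-dimensional analogues in \cite{Veys Relations}) then completes the vanishing.

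For $n=2$ this can be carried through completely: on the minimal embedded resolution one combines the structure theorem for the dual graph (Theorem~\ref{stuctureofdualgraph2}), the inequalities of Lemma~\ref{inequalitiesbetweennumericaldata}, and the elementary fact that cyclic covers of a tree of $\P^1$'s are again explicitly describable, to conclude that a contributing $E_j$ with $d\mid N_j$ must, via A'Campo, produce a monodromy eigenvalue of order divisible by $d$ near $h(E_j)$. For $n=3$ one would attempt the analogous argument using the structure theorem for open surfaces (Theorem~\ref{wafels}) in place of the graph structure, rerunning the cancellation arguments of \S\ref{sec:surfaces} ``with character'', that is, for the induced $\Z/d$-covers of the ruled surfaces and projective planes. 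I expect this last point to be the main obstacle in dimension $\geq 3$: the twisted coefficients $c_I(\varkappa)$ --- i.e.\ the $\Z/d$-covers of the exceptional surfaces --- are genuinely harder to control than fibres of a $\P^1$-bundle, the delicate sign and cancellation estimates behind the order-one vanishing theorem must be reproved for these covers, and one must also guarantee good reduction of all the covers outside a finite set of $K$. An alternative, cleaner-looking route relates $\sum_{\varkappa^d=1}Z_K(f,\varkappa,\varphi;s)$ to the zeta function of the cyclic cover $\{y^d=f(x)\}\subset\A^{n+1}$ and would deduce the conjecture from the ordinary monodromy conjecture there via a Thom--Sebastiani comparison of Milnor fibres; but since that monodromy conjecture is itself open for $n\geq 2$, this only relocates the difficulty.
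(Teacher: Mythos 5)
This statement is an open \emph{conjecture}, not a theorem: the paper does not prove it (nor claim to), it merely records Denef's formulation and cites \cite{Veys Holom curves} for the case $n=2$ and \cite{DV}\cite{RV}\cite{Veys Euler}\cite{CIL} for further partial results. So there is no ``paper's own proof'' to compare against, and any attempt that presents itself as a proof of the full statement must contain a genuine gap. Your write-up is really a strategy sketch, and to your credit you say so explicitly: you flag the $n\geq 3$ cancellation step as the main obstacle and point out that the cyclic-cover route only relocates the difficulty. That self-assessment is accurate.

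On the content of the sketch: the reformulation as a contrapositive needs some care. The precise contrapositive of the holomorphy conjecture is ``if $Z_K(f,\varkappa,\varphi;s)$ has a pole with $\ord\varkappa=d$, then \emph{some} monodromy eigenvalue of $f$ at some complex point has order divisible by $d$''; it does \emph{not} assert that the eigenvalue $e^{2\pi i\Re(s_0)}$ attached to the pole by the monodromy conjecture is itself of order divisible by $d$, and by Proposition~\ref{poles with character} one only knows $d\mid N_j$, from which $d\mid \ord\big(e^{-2\pi i\nu_j/N_j}\big)=N_j/\gcd(\nu_j,N_j)$ need not follow. Your stronger reading would still imply the conjecture, but you should be aware you are aiming at a strictly stronger target. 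Beyond that, the ingredients you list --- the character-twisted Denef formula, residue vanishing for components with $d\mid N_j$, reading the twisted coefficients $c_I(\varkappa)$ as (co)homology of $\mu_d$-covers, the analogue of A'Campo's formula, and in dimension two the interplay of Lemmas~\ref{relationsbetweennumericaldata} and \ref{inequalitiesbetweennumericaldata} with the dual-graph structure --- are indeed the ingredients that enter the known special-case proofs (in particular the $n=2$ proof in \cite{Veys Holom curves} and the homogeneous case in \cite{RV}, which use precisely the character-twisted Denef formula and Euler characteristics of the relevant covers, cf.\ also \cite{Veys Euler}). But none of this is a proof of the general conjecture; the hard part you identify for $n\geq 3$ (reproving the order-one vanishing results for the induced covers of the exceptional surfaces, and controlling good reduction of those covers) is exactly where the problem is open, and the reduction to the monodromy conjecture for $\{y^d=f(x)\}$ does not help because that conjecture is itself open in these dimensions. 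So: your outline is a reasonable and well-informed map of the terrain, but it should be presented as such and not as a proof.
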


We proved this for curves in \cite{Veys Holom curves}; other results are in \cite{DV}\cite{RV}\cite{Veys Euler}\cite{CIL}.

\subsection{Mappings/ideals}

For simplicity we take as test function $\varphi$ the standard choice: the characteristic function of $\Z_p^n$.

\begin{definition}\index{zeta function for mapping}
Let $f_1,\dots,f_r  \in\Q_p[x_1,\dots,x_n]$,  such that the ideal generated by  $f_1,\dots,f_r$ is not trivial.
The {\em $p$-adic Igusa zeta function\index{$p$-adic zeta function}\index{Igusa zeta function}\index{$p$-adic Igusa zeta function} associated to the mapping $\mathbf{f}=(f_1,\dots,f_r):\Q_p^n\to \Q_p^r$}  is 
$$
Z(\mathbf{f};s) := \int_{\Z_p^n} ||\mathbf{f}(x)||_p^s 
dx ,
$$
where $||\mathbf{f}(x)||_p = ||(f_1(x),\dots,f_r(x))||_p := \max\{|f_1(x)|_p,\dots,|f_r(x)|_p\}$.
\end{definition}

Many aspects of the one polynomial case generalize in a natural way. Fixing $p$ and taking all $f_j$ in $\Z[x_1,\dots,x_n]\setminus \Z$, knowing  $Z(\mathbf{f};s)$ is equivalent to the knowledge of the numbers of solutions of  $f_1=\ldots=f_r=0$ over the finite rings $\Z/p^i\Z$.
There is a similar \lq Denef type\rq\ formula\index{Denef's formula}, in terms of a principalization of the ideal generated by $f_1,\dots,f_r$ in $\Q_p[x_1,\dots,x_n]$. When the $f_j$ are defined over $\C$ or over any field $K$ of characteristic zero, we have a similar notion of topological\index{topological zeta function} and motivic zeta function\index{motivic zeta function}, respectively, associated to the ideal\index{zeta function for ideal} $I=(f_1,\dots,f_r)$ \cite{Veys-Zuniga1}.

On the other hand, there is no clear generalization of the notion of local Milnor fibre of a function $\C^n\to \C$ to an arbitrary polynomial map $\C^n\to\C^r$. But there is a notion of \lq Verdier monodromy\index{Verdier monodromy}\rq\ \cite{Ver}, associated to any ideal $I$ in $\C[x_1,\dots,x_n]$  (and even in a more general setting of subschemes of schemes), and also a notion of b-function\index{b-function} $b_I(s)$ for such an ideal \cite{BMS}, both generalizing the one polynomial case.  Moreover, a generalization of Proposition \ref{Malgrange} holds in this setting: if $s_0$ is a root of $b_I(s)$, then $e^{2\pi i s_0}$ is a Verdier monodromy eigenvalue of $I$, and every Verdier monodromy eigenvalue is obtained this way  \cite{BMS}.  We initiated the question whether the various versions of the monodromy conjecture for one polynomial could generalize to this setting of ideals, that is, do poles of zeta functions of ideals $I$ induce roots of $b_I(s)$ or Verdier monodromy eigenvalues of $I$?\index{monodromy conjecture}

The strong version was verified for monomial ideals in \cite{HMY}.  The (Verdier) monodromy version was  proven for arbitrary ideals in two variables in \cite{VPV2}, and for all families of space monomial curves with a plane semigroup in \cite{MVV} (see also \cite{MMVV}).
Of independent interest is an A'Campo type formula\index{A'Campo's formula}  for \lq Verdier monodromy zeta functions\rq\ in arbitrary dimension \cite[Theorem 3.2]{VPV2}.

A variant of the holomorphy conjecture\index{holomorphy conjecture} in this setting was proved for $n=2$ \cite{LVP2}.

\medskip
Most importantly, Musta\c t\v a \cite{Mu2} related $b_I(s)$ to the b-function of {\em one} polynomial (in more variables), and similarly for the zeta function $Z(\mathbf{f};s)$.

\begin{theorem}[\cite{Mu2}] (1) Let $I=(f_1,\dots,f_r)$ be a nontrivial ideal in $\C[x_1,\dots,x_n]$.  Consider the polynomial $g:= \sum_{i=1}^r f_i y_i\in \C[x_1,\dots,x_n,y_1,\dots,y_r]$.
Then $b_g(s) = (s+1)b_I(s) $.

(2) Let $f_1,\dots,f_r \in \Q_p[x_1,\dots,x_n]$ generate a nontrivial ideal.  Consider the polynomial $g:= \sum_{i=1}^r f_i y_i\in \Q_p[x_1,\dots,x_n,y_1,\dots,y_r]$.
Then
$$
Z(g;s) = \frac{1-p^{-1}}{1-p^{-s-1}} Z(\mathbf{f};s) .
$$
\end{theorem}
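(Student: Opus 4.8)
The plan is to compute $Z(g;s)$ directly from its definition by integrating out the auxiliary variables $y_1,\dots,y_r$, using the explicit shape of $|g(x,y)|_p$. Recall
$$
Z(g;s)=\int_{\Z_p^{n+r}} \bigl|\textstyle\sum_{i=1}^r f_i(x)y_i\bigr|_p^{s}\,dx\,dy,
$$
so the strategy is to perform the $y$-integration for each fixed $x\in\Z_p^n$, and to show that its value is $\frac{1-p^{-1}}{1-p^{-s-1}}\,\|\mathbf f(x)\|_p^{s}$, after which integrating over $x$ finishes the proof. Set $m:=\ord_p\|\mathbf f(x)\|_p=\min_i\ord_p f_i(x)$ (which may be $+\infty$ only on the measure-zero set where all $f_i(x)=0$, and can be neglected).

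First I would reduce to a universal one-variable computation. For fixed $x$, write $c_i:=f_i(x)$ and $c_i=p^{m}c_i'$ with $c_i'\in\Z_p$, not all $c_i'\in p\Z_p$; then $\sum c_i y_i=p^{m}\sum c_i' y_i$, so
$$
\int_{\Z_p^{r}}\bigl|\textstyle\sum_i c_iy_i\bigr|_p^{s}\,dy
= p^{-ms}\int_{\Z_p^{r}}\bigl|\textstyle\sum_i c_i'y_i\bigr|_p^{s}\,dy .
$$
It remains to evaluate $J:=\int_{\Z_p^{r}}|L(y)|_p^{s}\,dy$ where $L(y)=\sum_i c_i'y_i$ is a linear form with unit content (some $c_i'\in\Z_p^\times$). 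By a $\mathrm{GL}_r(\Z_p)$ change of coordinates on $\Z_p^r$ — which preserves the Haar measure — one may assume $L(y)=y_1$. Then $J=\int_{\Z_p}|y_1|_p^{s}\,dy_1$, and the standard elementary computation (exactly the one-variable monomial integral recalled in the paragraph before Theorem \ref{zetafunction is rational}, with $N=1,\nu=1,e=0$) gives
$$
J=\int_{\Z_p}|y_1|_p^{s}\,dy_1=\frac{1-p^{-1}}{1-p^{-s-1}} .
$$
Hence $\int_{\Z_p^r}|\sum_i c_i y_i|_p^s\,dy=\frac{1-p^{-1}}{1-p^{-s-1}}\,p^{-ms}=\frac{1-p^{-1}}{1-p^{-s-1}}\,\|\mathbf f(x)\|_p^s$, and integrating over $x\in\Z_p^n$ yields $Z(g;s)=\frac{1-p^{-1}}{1-p^{-s-1}}Z(\mathbf f;s)$, proving (2). (One should check convergence for $\Re(s)>0$, which is immediate since all integrands are bounded by $1$.)

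For part (1), the cleanest route is to deduce it from (2) by the same principle over a $p$-adic field, invoking the strong monodromy philosophy only formally: namely, one proves the identical integral identity $Z_K(g;s)=\frac{1-q^{-1}}{1-q^{-s-1}}Z_K(\mathbf f;s)$ over every $p$-adic completion $K$, notes that the factor $\frac{1-q^{-1}}{1-q^{-s-1}}$ has its unique pole at $s=-1$, and then reads off from the relation between poles of the $p$-adic zeta function and roots of the $b$-function — more robustly, one argues directly with $\mathcal D$-modules. Concretely, for (1) I would instead follow Musta\c t\v a's actual argument: exhibit a functional equation $P(s)\cdot g^{s+1}=(s+1)b_I(s)\,g^{s}$ by combining a Bernstein--Sato operator $Q(s)$ for the ideal $I$ (so that $Q(s)\cdot(f_1,\dots,f_r)^{s+1}=b_I(s)(\dots)^s$ in the appropriate sense) with the Euler operator $\sum_i y_i\partial_{y_i}$, which acts on $g^{s}$ by multiplication by a factor producing the extra $(s+1)$; this shows $b_g(s)\mid (s+1)b_I(s)$. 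For the reverse divisibility one restricts to the locus $y_1=1,y_2=\dots=y_r=0$ (or uses that $g$ specializes to $f_1$) together with the known factor $(s+1)$ coming from the smooth point contribution, to conclude $b_g(s)=(s+1)b_I(s)$.

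The main obstacle is part (1): the reduction of the $b$-function computation requires care with the precise definition of $b_I(s)$ for an ideal (it is defined via the $\mathcal D$-module generated by a symbol involving all the $f_i$, not a naive product), and one must verify that the Euler-operator trick genuinely contributes exactly the factor $(s+1)$ and no more, and that nothing is lost in the reverse divisibility. The integral identity in part (2), by contrast, is essentially a one-line Fubini computation once the $\mathrm{GL}_r(\Z_p)$-equivariance of Haar measure is used to normalize the linear form; the only subtlety there is the harmless measure-zero locus $\mathbf f(x)=0$.
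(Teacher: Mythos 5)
The paper does not prove this result; it states it as a citation to Musta\c t\v a \cite{Mu2}, so there is no in-paper proof to compare against. I evaluate your proposal on its own terms.

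Your argument for part~(2) is correct and complete, and it is essentially Musta\c t\v a's. For fixed $x$ with $\mathbf f(x)\neq 0$ (the complement being measure zero since $I$ is nontrivial), $\sum_i f_i(x)y_i=p^m L(y)$ with $m=\ord_p\|\mathbf f(x)\|_p$ and $L$ a primitive linear form; a row vector with a unit entry extends to a matrix in $\mathrm{GL}_r(\Z_p)$, which preserves $\Z_p^r$ and Haar measure, reducing to $\int_{\Z_p}|y_1|_p^s\,dy_1=\frac{1-p^{-1}}{1-p^{-s-1}}$. Fubini then gives the claimed identity, and the boundedness of $\|\mathbf f(x)\|_p$ on the compact set $\Z_p^n$ (even for $f_i$ with $\Q_p$-coefficients) handles convergence for $\Re(s)>0$.

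For part~(1) you do not supply a proof. The first route you float --- inferring the $b$-function identity from the $p$-adic integral identity via ``the relation between poles and roots of the $b$-function'' --- cannot work even in principle: that relation (the strong monodromy conjecture) is open, and even if true it only asserts that poles lie among $b$-function roots, which is far weaker than the polynomial identity $b_g(s)=(s+1)b_I(s)$ with exact multiplicities; you rightly step back from it. The $\mathcal D$-module route you then sketch --- a Bernstein--Sato operator for the ideal combined with $\partial_{y_i}g^{s+1}=(s+1)f_ig^s$ and the Euler operator, plus a specialization for the reverse divisibility --- is the correct strategy, but everything substantive is left open: you have not exhibited the functional equation for $g$, verified compatibility with the Budur--Musta\c t\v a--Saito definition of $b_I$ (which lives on a $\mathcal D_X[s_1,\dots,s_r]$-module and involves the substitution $s=\sum_i s_i$, so the bookkeeping is genuinely delicate), or proved that nothing is lost going the other way. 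You acknowledge this yourself, so the issue is transparency rather than error, but as written part~(1) is a plan, not a proof. Only part~(2) stands as a complete argument.
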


As a corollary, the monodromy conjecture for one polynomial {\em in all dimensions} implies the monodromy conjecture for ideals in all dimensions.

\subsection{Zeta functions with differential form}

Recall Corollary \ref{poles induce eigenvalues} in the context of the real zeta functions of \S\ref{archimedean}: if $s_0$ is a pole of $Z(f,\varphi;s)$, then $e^{2\pi i s_0}$ is a monodromy eigenvalue of $f$ at some point of $\{f=0\}$.  This is also true for complex zeta functions associated to a complex polynomial; and in that case also the converse holds: {\em all} monodromy eigenvalues of $f$ are obtained this way \cite{Ba}. (There are similar converse results for real polynomials and zeta functions by Barlet and collaborators.)

We now shift our point of view;  for a test function $\varphi:\R^n\to \C$, we consider $\varphi dx$ together as the measure associated to the  $C^\infty$ differential $n$-form $\varphi dx =\varphi(x)dx_1\wedge\dots\wedge dx_n$.  Then a converse result as above says that a given monodromy eigenvalue of $f$ can be obtained from a pole of an archimedean zeta function associated to $f$ and a form $\varphi dx$.

Is there some analogue of this more general result in the $p$-adic case?  For sure, locally constant functions are much less flexible than $C^\infty$ functions;
typically \lq  most\rq\  monodromy eigenvalues of $f$  are {\em not} induced by poles of $p$-adic zeta functions.
We proposed the following setting, taking here for simplicity as test function the characteristic function on $(p\Z_p)^n$.

We view the Haar measure $dx$ on $\Q_p^n$ as the measure associated to the standard differential $n$-form $dx=dx_1\wedge\dots\wedge dx_n$, and consider the more general $n$-form $\omega=gdx$, with $g\in \Q_p[x_1,\dots,x_n]$, inducing the measure given by $|g(x)|_p dx$.

\begin{definition}\index{zeta function for polynomial and differential form}
Let $f,g \in \Q_p [x_1, \dots, x_n]$ with $f\notin\Q_p$ and $g\neq 0$.  The {\em $p$-adic Igusa zeta function\index{$p$-adic zeta function}\index{Igusa zeta function}\index{$p$-adic Igusa zeta function} associated to $f$ and $\omega=gdx$} is
$$
Z_0(f,\omega;s) := \int_{(p\Z_p)^n} |f(x)|_p^s |g(x)|_p dx.
$$
\end{definition}

There is a Denef type formula\index{Denef's formula} for $Z_0(f,\omega;s)$, generalizing Theorem \ref{denefformula}, now in terms of an embedded resolution of $\{fg=0\}$, leading to a topological zeta function also in this context.

\begin{definition}\index{zeta function for polynomial and differential form}
Let $f,g\in \C[x_1,\dots,x_n]$ with $f\notin \C$ and $g\neq 0$. Let $h:Y\rightarrow \A_\C^n$ be an embedded resolution of $\{fg=0\}$, for which we adapt somewhat the notation from Note \ref{notation embedded resolution}. That is, now
$\cup_{j\in S}E_j = h^{-1}\{fg=0\}$,  $N_j$ is still the multiplicity of $E_j$ in $\div(f\circ h)$, but now $\nu_j -1$ is the multiplicity of $E_j$ in $\div(h^*(gdx))$.

The {\em  local topological zeta function\index{topological zeta function} associated to $f$ and $\omega = gdx$ at $a\in \{f=0\}$} is
\begin{equation}
Z_{\top,a}(f, \omega;s) :=	\sum_{I\subset S}\chi(E_I^\circ \cap h^{-1}\{a\})\prod_{i\in I}\frac{1}{\nu_i+ N_is}.
\end{equation}
	\end{definition}

Now, given a monodromy eigenvalue $\lambda$ of $f$ at $0\in\{f=0\}$, does there exist a form $\omega$ and a pole $s_0$ of $Z_{\top,a}(f, \omega;s)$ (for some $a$ close to $0$) such that $e^{2\pi is_0}=\lambda$?
The answer is yes, in complete generality in all dimensions, see  \cite[Theorem 3.6]{Veys forms}.  When $0$ is an isolated singularity, we can take $a=0$.

However, typically such a $Z_{\top,a}(f, \omega;s)$ has many other poles which {\em do not} induce monodromy eigenvalues of $f$.  We initiated the following question/programme, which is pretty ambitious, since it would be a vast generalization of the classical monodromy conjecture.

\begin{problem}\label{big MC}\index{monodromy conjecture}
Given $f\in \C[x_1,\dots,x_n]\setminus \C$ satisfying $f(0)=0$, identify/define a class of {\em allowed  forms\index{allowed differential form} $\omega=gdx$ for $f$},  such that
\begin{enumerate}
\item[(1)] if $s_0$ is a pole of $Z_{\top,0}(f, \omega;s)$, where $\omega$ is allowed, then $e^{2\pi is_0}$ is a monodromy eigenvalue of $f$ (at a point close to $0$);
\item[(2)] the standard form $dx=dx_1\wedge\dots\wedge dx_n$ is allowed;
\item[(3)] if $\lambda$ is a monodromy eigenvalue of $f$ at $0$, then there exists an allowed form $\omega$ and a pole $s_0$ of  $Z_{\top,0}(f, \omega;s)$ such that $e^{2\pi is_0}=\lambda$.
\end{enumerate}
\end{problem}

One can imagine global versions or slightly adapted local versions. We like to call this problem the {\em generalized monodromy conjecture}.\index{generalized monodromy conjecture}

For $n=2$, we realized this programme in collaboration with N\'emethi \cite{NV1}\cite{NV2}, via the concept of \lq  splicing\rq\index{splicing}.  For a given  $f$, we consider its simplified dual resolution graph $\Gamma_f$, deleting all vertices of valence $2$, hence leaving only vertices of valence at least 3 (called {\em nodes}) and of valence 1 (end vertices). Choosing any edge $e$ connecting two nodes (assuming $\Gamma_f$ contains at least two nodes), we cut $\Gamma_f$ along $e$  and put a new end vertex at each side of the cut, creating this way two new graphs . It was known that these can be interpreted as simplified dual resolution graphs of two polynomials $f_L$ and $f_R$, such that the monodromy zeta function of $f$ is essentially the product of the monodromy zeta functions of $f_L$ and $f_R$. We showed that moreover $Z_{\top,0}(f,\omega;s)$ is (up to a simple correction term) the sum of  $Z_{\top,0}(f_L, \omega_L;s)$ and $Z_{\top,0}(f_R, \omega_R;s)$, for well chosen forms $\omega_L$ and $\omega_R$.  (When $\omega$ is the standard form, $\omega_L$ and $\omega_R$ usually are {\em not}!)
Continuing such splicing, we decompose $\Gamma_f$ into a number of \lq star shaped\rq\ graphs, each with exactly one node.

In the special case that $\Gamma_f$ itself is a star shaped graph, we establish a pretty natural definition of allowed forms for $f$, realizing the programme.  For general $f$, both the definition of allowed form\index{allowed differential form} for $f$ and our proof of the statements in Problem \ref{big MC} are obtained by splicing $\Gamma_f$ as above in star shaped pieces, and analyzing carefully the behaviour of eigenvalues and poles under this operation.
Note that in particular this yields a new proof of the \lq classical\rq\ monodromy conjecture for $n=2$.

\begin{example}(continuing Example \ref{example})  Consider again $\{f=y^3-x^5=0\}$. Its minimal embedded resolution $h$  is also an embedded resolution of $\{xyf=0\}$. This is illustrated in the figure, where $C_1$ and $C_2$ denote $\{x=0\}$ and $\{y=0\}$ and their strict transforms, respectively. (Note that $\Gamma_f$ is star shaped.)

\bigskip

\centerline{
\beginpicture
\setcoordinatesystem units <.5truecm,.5truecm>

 \setdashes
 \putrule from -2 0 to 2 0
 \putrule from 0 -2 to 0 2
 \setsolid

\ellipticalarc axes ratio 2:3  70 degrees from 0 0 center at 0 3
\ellipticalarc axes ratio 2:3  -70 degrees from 0 0 center at 0 -3

\put {$\bullet$} at 0 0
 \put {$ C_1$} at -.6 -1.9
 \put {$ C_2$} at -1.6 .5
 \put {$\{f=0\}$} at 1.7 2.5

\put {$\longleftarrow$} at 3.9 0
 \put {$h$} at 3.9 .5

\setcoordinatesystem units <.5truecm,.5truecm> point at -13 0

 \putrule from -5 0  to 5 0
 \putrule from  0  -1.5 to 0 2.5
 \putrule from -3.5  -1.5  to -3.5 3
 \putrule from 3.5  -1.5  to 3.5 3
 \putrule from 7.8 2 to 2.5 2

\setdashes
 \putrule from -6 2 to -2 2
\putrule from 6.8 .5  to 6.8 3
 \setsolid

 \put {$E_1$} at 5.1 2.5       \put{$(3,j+k)$}  at 5.1 1.5
  \put {$E_2 \, (5,j+2k)$}  at  -3.4  -1.9
  \put {$E_3 \, (9,2j+3k)$}  at  4.2  -1.9
  \put {$E_4$} at -5.4  .4     \put{$(15,3j+5k)$}  at -5.6  -.5
   \put {$E_0\, (1,1)$}  at  .3  -1.9
  \put {$C_2\,  (0,k)$} at -6  2.4
\put{$C_1$} at 7.4 .5   \put{$(0,j)$} at 7.2  -.3

\endpicture}

\bigskip
\noindent
 The forms $\omega_{jk} := x^{j-1}y^{k-1}dx\wedge dy$, with $j\in \{1,2,3,4\}$ and $k\in \{1,2\}$,  are typical examples of allowed forms for $f$, and we claim that they realize the programme above.
We indicate in the figure all numerical data $(N_i,\nu_i)$, where now the $\nu_i$ come from the form $\omega_{jk}$.
Then by a straightforward computation, or by \cite[Proposition 2.7]{Veys forms}, we have that
$$Z_{\top,0}(f, \omega_{jk};s) = \frac{3j+5k + (3j+5k-jk)s}{jk(1+s)(3j+5k+15s)},$$
 hence its poles are precisely $-1$ and $s_{jk}:=-\frac{3j+5k}{15}$.
Consequently the $e^{2\pi i s_{jk} }$ cover exactly the primitive $15$th roots of unity.
\end{example}

The \lq almost additive formula\rq\ for the topological zeta function was upgraded to the motivic zeta function\index{motivic zeta function} by Cauwbergs \cite{Ca}, resulting in realizing the programme\index{generalized monodromy conjecture} also for the motivic zeta function when $n=2$.  (A weaker programme, omitting (2) in Problem \ref{big MC}, was realized in \cite{CV}, using some dirty trick.)

\smallskip
One could wonder about a stronger version of Problem \ref{big MC}, replacing monodromy eigenvalues by roots of the b-function\index{b-function}.  This was investigated by Bories \cite{Bo2} for $n=2$, obtaining some negative answers and a partially positive one. Then a next question is: does there exist some variant of Problem \ref{big MC} (with eigenvalues replaced by roots of the b-function) that could be realistic?

\subsection{Multivariate zeta functions}

There are obvious multivariate generalizations of the archimedean and $p$-adic Igusa zeta functions associated to several polynomials, see e.g. \cite{Lo2}, and of the induced topological and motivic zeta functions.  We mention the basic $p$-adic and topological versions.

\begin{definition}\index{multivariate zeta function}
(1) Let $f_1,\dots,f_k \in \Q_p [x_1, \dots, x_n] \setminus \Q_p$ and put $F=(f_1,\dots,f_k)$. The multivariate {\em $p$-adic Igusa zeta function\index{$p$-adic zeta function}\index{Igusa zeta function}\index{$p$-adic Igusa zeta function} associated to $F$} is
$$
Z(F;\mathbf{s}) := \int_{\Z_p^n} \prod_{\ell=1}^k |f_\ell(x)|_p^{s_\ell}  dx ,
$$
where all $s_\ell \in \C$ with $\Re(s_\ell)>0$, and $\mathbf{s} = (s_1,\dots,s_k)$.%

(2)
Let $f_1,\dots,f_k\in \C[x_1,\dots,x_n]\setminus \C$; we put $F=(f_1,\dots,f_k)$ and $f=\prod_{\ell=1}^k f_\ell$. Choose an embedded resolution $h:Y\rightarrow \A_\C^n$ of $\{f=0\}$, for which we use notation from Note \ref{notation embedded resolution}, with moreover $\div(f_\ell\circ h)=\sum_{j\in S} N_j^{(\ell)} E_j $ for $\ell=1,\dots,k$.
The (global) {\em topological zeta function\index{topological zeta function} of $F$} is
\begin{equation}
Z_{\top}(F;\mathbf{s}) :=	\sum_{I\subset S}\chi(E_I^\circ)\prod_{i\in I}\frac{1}{\nu_i+ N_i^{(1)}s_1+\dots+N_i^{(k)}s_k}.
\end{equation}
	\end{definition}

We focus here on the topological one.    We denote its {\em polar locus}, that is, the support of the divisor of poles, by $\mathcal{P}(Z_{\top}(F;\mathbf{s}))$. It is a union of hyperplanes, among the
candidate polar hyperplanes\index{polar hyperplane} $\{\nu_j+ N_j^{(1)}s_1+\dots+N_j^{(k)}s_k=0\}, j\in S$.

The generalization of the monodromy eigenvalues (when $k=1$) is the monodromy support of $F$, defined
as follows. For each $\alpha\in (\C^*)^{k}$, one  has a rank one local system on $(\C^*)^{k}$ with monodromy $\alpha_i$ around the $i$th missing coordinate hyperplane. Let $\mathcal{L}_\alpha$  denote its  pullback  via $F|_{\C^n\setminus \{f=0\}}: \C^n\setminus \{f=0\}\to (\C^*)^k$. For $x\in \{f=0\}$, let $U_x$ be the complement of $\{f=0\}$ in a small open ball around $x$ in $\C^n$. Then the {\em monodromy support\index{monodromy support} $\mathcal{S}_{F}$ of $F$} consists of those $\alpha\in (\C^*)^{k}$ for which there exists a point $x\in \{f=0\}$ with
$H^*(U_x, \mathcal L _\alpha)\ne 0$.
(It is a finite union of torsion-translated codimension one affine algebraic subtori of $(\C^*)^{k}$ \cite{BLSW}.)

\begin{conjecture}
Let $F$ be a $k$-tuple of nonconstant polynomials in $\C[x_1,\ldots,x_n]$. Then
$$\Exp\Big(\mathcal{P}\big(Z_{\top}(F;\mathbf{s})\big)\Big)\subset \mathcal{S}_F,$$
where  $\Exp:\C^k\to (\C^*)^k$ is the coordinate wise  map $\beta\mapsto e^{2\pi i \beta}$.
\end{conjecture}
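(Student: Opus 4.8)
The plan is to run the standard two‑sided scheme that underlies every known instance of the monodromy conjecture, now in the multivariable setting. Fix an embedded resolution $h:Y\to\A^n_\C$ of $\{f=0\}$ as in the statement, with numerical data $\mathbf n_j=(N_j^{(1)},\dots,N_j^{(k)})$ and $\nu_j$, and candidate polar hyperplanes $H_j=\{\nu_j+\mathbf n_j\cdot\mathbf s=0\}$. For a hyperplane $H$ occurring among the $H_j$, set $S_H=\{j\in S: H_j=H\}$; all $\mathbf n_j$, $j\in S_H$, are positive rational multiples of one primitive vector, so $\Exp(H_j)$ is the same torsion‑translated codimension‑one subtorus for every $j\in S_H$. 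Since a generic point of $H$ lies on $H_i$ precisely for $i\in S_H$, the polar behaviour of $Z_{\top}(F;\mathbf s)$ along $H$ is governed by the strata $E_I^\circ$ with $I\subseteq S_H$, and the ``residue along $H$'' is a rational function on $H$ assembled from the numbers $\chi(E_I^\circ)$ and the restrictions to $H$ of the affine forms $\nu_i+\mathbf n_i\cdot\mathbf s$ with $i\notin S_H$. First I would show that $H\subseteq\mathcal P(Z_{\top}(F;\mathbf s))$ forces this residue to be nonzero, and extract from it --- exactly as in \S\ref{sec:3} and \S\ref{sec:surfaces} --- a component $E_{j_0}\subseteq\bigcup_{j\in S_H}E_j$ whose open part $E_{j_0}^\circ$ carries an Euler characteristic of the ``wrong sign'' locally over some $a\in f(E_{j_0})$, i.e.\ of the sign that does \emph{not} guarantee cancellation.

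\textbf{The monodromy side.} In parallel I would establish a multivariable A'Campo‑type description of $\mathcal S_F$ in terms of $h$. Pulling back the rank‑one local system $\mathcal L_\alpha$ along $h$ and computing its derived pushforward through the normal‑crossings geometry --- equivalently, unwinding the Gabber--Loeser/Sabbah specialization complex for $F|_{\C^n\setminus\{f=0\}}:\C^n\setminus\{f=0\}\to(\C^*)^k$ --- one obtains that $\mathcal L_\alpha$ has nontrivial cohomology near $a\in f(E_{j_0})$ whenever $\alpha^{\mathbf n_{j_0}}=1$ (that is, $\alpha\in\Exp(H_{j_0})$) and a local weighted Euler characteristic at $E_{j_0}$ is nonzero; this is the multivariable analogue of A'Campo's formula $\zeta_a(t)=\prod_j(t^{N_j}-1)^{-\chi(E_j^\circ\cap h^{-1}\{a\})}$. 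Combined with the elementary remark that a component $E_{j_0}$ of the strict transform of some $\{f_\ell=0\}$ automatically yields $\Exp(H_{j_0})\subseteq\mathcal S_F$ (generalizing Proposition \ref{monodromy properties}(3)), and with the device of passing to nearby points to defeat cancellation (as in \cite[Lemma 4.6]{Denef3} for $k=1$), this would give $\Exp(H_{j_0})\subseteq\mathcal S_F$ for precisely those $E_{j_0}$ flagged on the zeta‑function side.

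\textbf{Matching, and the main obstacle.} The two halves meet in the assertion: \emph{if the residue of $Z_{\top}(F;\mathbf s)$ along $H$ is nonzero, then some $E_{j_0}$ with $H_{j_0}=H$ has nonvanishing A'Campo weight over some nearby point, whence $\Exp(H)=\Exp(H_{j_0})\subseteq\mathcal S_F$}. Turning non‑cancellation of the topological zeta function into non‑cancellation of monodromy is exactly the step that is open already for $k=1$ and $n\ge 3$, and the multivariable version is harder still: the sign bookkeeping of \S\ref{sec:3}--\S\ref{sec:surfaces} must be carried out simultaneously along all components $E_j\in S_H$ of the several hypersurfaces $\{f_\ell=0\}$, and the relations between numerical data in the style of \cite{Veys Relations} must be used in a form tracking all the $\mathbf n_j$ at once. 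A realistic route is stratified by dimension: first settle $n=2$ (where the structure theory of \S\ref{sec:3} should suffice, cf.\ also the ideal‑theoretic treatment in \cite{VPV2}); then $n=3$, by upgrading the non‑contribution theorems of \S\ref{sec:surfaces} --- whose geometric core is the analysis of curve complements in rational surfaces, Theorems \ref{rational} and \ref{wafels} --- to the multivariable numerical data; and then special families ($F$ nondegenerate, (semi‑quasi‑)homogeneous, or of hyperplane‑arrangement type) in arbitrary $n$ by the combinatorial methods surveyed in \S\ref{special polynomials}.
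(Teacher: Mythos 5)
This statement is a \emph{conjecture}; the paper does not prove it, and as the text notes it is open in general, with only special cases known (tuples of plane curves, tuples of hyperplane arrangements, tuples factorizing a tame arrangement, log (very-)generic tuples). Your submission is therefore not a proof and does not claim to be one: you explicitly flag the central step --- ``turning non-cancellation of the topological zeta function into non-cancellation of monodromy'' --- as the step that is already open for $k=1$ and $n\ge 3$, which is exactly right. So the honest verdict is that there is a genuine gap, namely the entire matching step, and you have correctly located it.

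A few remarks on the strategy itself, since it is sensible and worth comparing against what is in the literature. The reduction you sketch on the zeta-function side --- grouping candidate polar hyperplanes $H$ by the set $S_H$ of components inducing them, observing that all $\Exp(H_j)$, $j\in S_H$, coincide as a torsion-translated subtorus, and trying to force some $E_{j_0}\subseteq\bigcup_{j\in S_H}E_j$ to carry a nonzero A'Campo-type weight --- is indeed the multivariate shadow of the $k=1$ argument in \S\ref{sec:3} and the strategy of \S\ref{sec:surfaces}. Your ``multivariable A'Campo description of $\mathcal S_F$'' does exist in essence: $\mathcal S_F$ is a finite union of torsion-translated codimension-one subtori \cite{BLSW}, and the known proofs for plane curves \cite{Ni} and hyperplane arrangements \cite{B-ls} proceed by an analysis of $H^*(U_x,\mathcal L_\alpha)$ via the resolution, much as you describe. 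Two cautions. First, the residue you want to analyze is not a number but a rational function on the hyperplane $H$, and ``nonzero residue'' must be interpreted as nonvanishing of that function; the relations between numerical data needed to control it (generalizing \cite{Veys Relations} to the vectors $\mathbf n_j$) are not in the literature in the form you would need beyond $n=2$. Second, the analogue of Proposition \ref{monodromy properties}(3) for a strict-transform component $E_{j_0}$ of $\{f_\ell=0\}$ does not automatically give $\Exp(H_{j_0})\subseteq\mathcal S_F$: one has to check that the relevant local system actually has nontrivial cohomology near the generic point of that component, which in the multivariate setting involves the monodromies around \emph{all} nearby components, not just $E_{j_0}$. These are precisely the places where the $k=1$ bookkeeping does not carry over verbatim, and why the conjecture remains open.

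In short: you have correctly reconstructed the expected shape of a proof and correctly identified where it breaks down; what you have is a research programme, not a proof, consistent with the fact that the statement is an open conjecture in the paper.
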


For $k> 1$, this generalized conjecture was posed by Loeser, see  \cite{Ni} and \cite{Lo2}.
 It is known for tuples of plane curves \cite{Ni}, and tuples of hyperplane arrangements\index{hyperplane arrangement} \cite{B-ls}.

A stronger version involves a generalization of the Bernstein-Sato polynomial,  the {\em Bernstein-Sato ideal\index{Bernstein-Sato ideal} $B_F$ of $F$}, which is the ideal generated by polynomials $b\in\C[s_1,\dots,s_k]$ satisfying
$$
P\prod_{\ell=1}^k f_\ell^{s_\ell+1} =  b\prod_{\ell=1}^k f_\ell^{s_\ell}
$$
for some $P\in\mathcal{D}_n[s_1,\ldots,s_{k}]$.
Let $Z(B_F)$ denote the zero locus of $B_F$ in $\C^k$. (It was  proven in \cite{BVWZ} that $\Exp(Z(B_F))=\mathcal{S}_F$, extending Theorem \ref{Malgrange}.)
The stronger version then states that $\mathcal{P}(Z_{\top}(F;\mathbf{s}))\subset Z(B_F)$.

This stronger version was posed in \cite{B-ls}. It is known for tuples factorizing a tame hyperplane arrangement \cite{Bath}, and for tuples of linear polynomials \cite{Wu}.
In \cite{BvdV} instances of both standard and stronger version are proved in the context of so-called {\em log generic\index{log generic polynomial}} and {\em log very-generic\index{log very-generic polynomial}} polynomials, respectively.

\bigskip
\noindent
{\bf Acknowledgements.} The author is indebted to Jan Denef for introducing him a long time ago to this fascinating topic in the broad world of singularities. He thanks the organizers of the IberoSing International Workshop (2022) in Madrid for forcing him to prepare a course on the monodromy conjecture, that served as inspiration for this text, and also Guillem Blanco and the referees for their help in improving  the text.




\makeatletter \renewcommand{\@biblabel}[1]{\hfill#1.}\makeatother


\end{document}